\newcolumntype{M}[1]{>{\centering\arraybackslash}m{#1}} 
\newcommand{\Z}{\mathbb Z}
\newcommand{\N}{\mathbb N}
\newcommand{\Q}{\mathbb Q}
\newcommand{\calL}{\mathcal L}
\newcommand{\nbhd}{\mathcal{N}}
\newcommand{\bdry}{\partial}
\newcommand{\nos}{\normalsize}
\newcommand{\bpc}[1]{\begin{picture} #1 \end{picture}}
\newcommand{\pcr}[2]{\mbox{$\begin{array}{c}
   \includegraphics[scale=#2]{#1.eps}
   \end{array}$}}
\newcommand{\dis}{\displaystyle}
\newtheorem{theorem}{Theorem}[section]
\newtheorem{lemma}[theorem]{Lemma}
\newtheorem{proposition}[theorem]{Proposition}
\newtheorem{corollary}[theorem]{Corollary}
\newtheorem{claim}[theorem]{Claim}
\newtheorem{conjecture}[theorem]{Conjecture}
\newtheorem{remark}[theorem]{Remark}
\newtheorem{question}[theorem]{Question}
\newtheorem*{thm_slope conjecture Wmt}{Theorem~\ref{bothslopeconjecturesWmt}(1)}
\newtheorem*{thm_strong slope conjecture Wmt}{Theorem~\ref{bothslopeconjecturesWmt}(2)}
\newtheorem*{cor_finite-sequence}{Corollary~\ref{finite-sequence}}
\theoremstyle{definition}
\newtheorem{definition}[theorem]{Definition}
\newtheorem{example}[theorem]{Example}
\newtheorem{convention}[theorem]{Convention}
\numberwithin{equation}{section}
\numberwithin{figure}{section}
\numberwithin{table}{section}
\definecolor{dartmouthgreen}{rgb}{0.05, 0.5, 0.06}
\renewcommand{\(}{\textup{(}}
\renewcommand{\)}{\textup{)}}
\begin{document}
\baselineskip 14pt

\title{The Strong Slope Conjecture for twisted generalized Whitehead doubles}

\author[K.L. Baker]{Kenneth L. Baker}
\address{Department of Mathematics, University of Miami, 
Coral Gables, FL 33146, USA}
\email{k.baker@math.miami.edu}

\author[K. Motegi]{Kimihiko Motegi}
\address{Department of Mathematics, Nihon University, 
3-25-40 Sakurajosui, Setagaya-ku, 
Tokyo 156--8550, Japan}
\email{motegi@math.chs.nihon-u.ac.jp}

\author[T. Takata]{Toshie Takata}
\address{Graduate School of Mathematics, Kyushu University, 
744 Motooka, Nishi-ku, Fukuoka 819--0395, Japan}
\email{ttakata@math.kyushu-u.ac.jp}

\dedicatory{}

\begin{abstract}
The Slope Conjecture proposed by Garoufalidis asserts that the degree of the colored Jones polynomial determines 
a boundary slope, and its refinement, the Strong Slope Conjecture proposed by Kalfagianni and Tran asserts that the linear term in the degree determines the topology of an essential surface that satisfies the Slope Conjecture.  
Under certain hypotheses, we show that twisted, generalized Whitehead doubles of a knot satisfies the Slope Conjecture and the Strong Slope Conjecture if the original knot does.  
Additionally, we provide a proof that there are Whitehead doubles which are not adequate.
\end{abstract}

\maketitle

\renewcommand{\thefootnote}{}
\footnotetext{2010 \textit{Mathematics Subject Classification.}
Primary 57M25, 57M27
\footnotetext{ \textit{Key words and phrases.}
colored Jones polynomial, Jones slope, boundary slope, Whitehead double, Slope Conjecture, Strong Slope Conjecture}
}

\tableofcontents

\section{Introduction}
\label{section:Introduction}

Let $K$ be a knot in the $3$--sphere $S^3$.   
The Slope Conjecture of Garoufalidis \cite{Garoufalidis} and the Strong Slope Conjecture of Kalfagianni and Tran \cite{KT} propose relationships between a quantum knot invariant, 
the degrees of the colored Jones function of $K$, 
and a classical invariant, the boundary slope and the topology of essential surfaces in the exterior of $K$.

The \textit{colored Jones function} of $K$ is a sequence of Laurent polynomials $J_{K, n}(q) \in \mathbb{Z}[q^{\pm\frac{1}{2}}]$ for $n \in \mathbb{N}$, 
where $J_{\bigcirc, n}(q)=\frac {q^{n/2}-q^{-n/2}}{q^{1/2}-q^{-1/2}}$ for the unknot $\bigcirc$ and 
$\frac {J_{K, 2}(q)}{J_{\bigcirc, 2}(q)}$ is the ordinary Jones polynomial of $K$. 
Since the colored Jones function is $q$--holonomic \cite[Theorem~1]{GL}, 
the degrees of its terms are given by \textit{quadratic quasi-polynomials} for suitably large $n$ \cite[Theorem 1.1 \& Remark 1.1]{Gqqp}.   
For the maximum degree $d_+[J_{K,n}(q)]$, 
we set the quadratic quasi-polynomials to be 
\[ \delta_K(n) = a(n) n^2 + b(n) n+ c(n) \]
for rational valued periodic functions $a(n), b(n), c(n)$ with integral period.
Now define the sets of {\em Jones slopes} of $K$:
\[  js(K) = \{ 4a(n) \ |\ n \in \mathbb{N} \}.\]

Allowing surfaces to be disconnected, we say a properly embedded surface in a $3$--manifold is {\em essential} if each component is orientable, incompressible, and boundary-incompressible. A number $p/q \in \Q \cup \{\infty\}$ is a {\em boundary slope} of a knot $K$ if there exists an essential surface in the knot exterior $E(K)=S^3-\mathrm{int}N(K)$ with a boundary component representing $p[\mu]+q[\lambda] \in H_1(\bdry E(K))$ with respect to the standard meridian $\mu$ and longitude $\lambda$.  
Now define the set of  boundary slopes of $K$:
\[ bs(K) = \{ r \in \mathbb{Q}\cup \{\infty\}\ |\ r\ \mbox{ is a boundary slope of }\ K\}. \]
Since a Seifert surface of minimal genus is an essential surface, 
$0 \in bs(K)$ for any knot. 
Let us also remark that $bs(K)$ is always a finite set \cite[Corollary]{Hat1}. 

Garoufalidis conjectures that Jones slopes are boundary slopes.

\begin{conjecture}[\textbf{Slope Conjecture} \cite{Garoufalidis}]
\label{slope conjecture}
For any knot $K$ in $S^3$,  
every Jones slope is a boundary slope.  
That is $js(K) \subset bs(K)$. 
\end{conjecture}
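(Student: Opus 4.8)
The plan is to attack the containment $js(K) \subset bs(K)$ by controlling the two sides of the conjecture independently and then matching them. On the quantum side, $q$--holonomicity of the colored Jones function already guarantees, via \cite{Gqqp}, that $\delta_K(n) = a(n)n^2 + b(n)n + c(n)$ is a quadratic quasi-polynomial, so that $js(K) = \{4a(n) \mid n \in \mathbb{N}\}$ is finite and well defined; the real work is to compute the leading quasi-coefficient $a(n)$ explicitly. On the classical side, one must produce, for each value $4a(n)$, an essential surface in $E(K)$ whose boundary realizes that slope. The proposal is therefore threefold: (1) extract $a(n)$ from a recursion or a state-sum/fusion expansion of $J_{K,n}(q)$; (2) construct candidate essential surfaces, for instance via Hatcher--Oertel edgepath systems or spanning surfaces read off from a knot diagram; and (3) verify that the boundary slopes of these surfaces account for every element of $js(K)$.

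I would not expect this program to succeed for arbitrary $K$. The statement is a genuinely open conjecture, and the honest assessment is that there is no uniform mechanism linking the growth rate of $d_+[J_{K,n}(q)]$ to the geometry of incompressible surfaces. What does work is a case-by-case approach. For \emph{adequate} knots (alternating knots in particular) the degrees of the colored Jones polynomial are governed by the reduced state graph of an adequate diagram, and the associated state surfaces are essential with boundary slopes equal to the Jones slopes; this is the mechanism underlying the Futer--Kalfagianni--Purcell and Kalfagianni--Tran results. For torus knots, cables, and other knots admitting an explicit recursion one can compute $a(n)$ directly and exhibit a matching surface by hand.

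The main obstacle is precisely step (3): even when $a(n)$ is known, there is no general recipe that guarantees an essential surface of the predicted slope, and conversely $bs(K)$ typically contains many slopes with no relation to the colored Jones degree. For the purposes of the present paper we therefore do not attempt the conjecture in full generality. Instead the strategy is to \emph{assume} the Slope Conjecture (and its strong refinement) for a companion knot $K$ and to propagate it through a satellite operation: by understanding how both the leading colored Jones degree and the essential surfaces of a twisted generalized Whitehead double are assembled from those of $K$, one reduces the statement for the double to the already-assumed statement for $K$. The hard part there will be tracking the leading quasi-coefficient and the boundary slope \emph{simultaneously} under the doubling operation, which is exactly what the later sections carry out.
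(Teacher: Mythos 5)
Your assessment is accurate: this statement is an open conjecture, the paper offers no proof of it, and your description of the paper's actual strategy --- assuming the Slope Conjecture (and Strong Slope Conjecture) for the companion $K$ and propagating it to $W_{\omega}^{\tau}(K)$ by computing the leading degree of the colored Jones polynomial of the double (Propositions~\ref{maxdeg-signcond} and \ref{maxdeg_KT_Wmt-allw}) and matching it against surfaces built from the Floyd--Hatcher/Hoste--Shanahan catalogue in the two-bridge link exterior glued to surfaces in $E(K)$ --- is exactly what the paper does. No gap to report, since neither you nor the paper claims a proof of the conjecture itself.
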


Garoufalidis' Slope Conjecture concerns only the quadratic terms of $\delta_K(n)$.
Recently Kalfagianni and Tran have proposed the Strong Slope Conjecture which subsumes the Slope Conjecture and asserts that the topology of the surfaces whose boundary slopes are Jones slopes may be predicted by the linear terms of $\delta_K(n)$.   
Define
\[  jx(K) =  \{ 2b(n) \ |\ n \in \mathbb{N} \}. \]

Let $K$ be a knot in $S^3$ with $\delta_K(n) = a(n) n^2 + b(n) n+ c(n)$. 
For a given Jones slope $p/q \in js(K)$ ($q > 0$), 
we say that $p/q$ satisfies $SS(n)$ ($n \in \N$)
if there is an essential surface $F_n$ in the exterior of $K$ 
such that 
\begin{itemize}
\item $F_n$ has the boundary slope $4a(n) = p/q$, and
\item $\displaystyle 2b(n) = \frac{\chi(F_n)}{|\bdry F_n| q}$.
\end{itemize}

\begin{conjecture}[\textbf{The Strong Slope Conjecture} \cite{KT, K-AMSHartford}]
\label{YSSC2}
For any knot in $S^3$, 
every Jones slope satisfies $SS(n)$ for some $n \in \mathbb{N}$. 
\end{conjecture}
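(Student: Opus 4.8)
The plan is to regard Conjecture~\ref{YSSC2} not as a single argument but as a program, since no uniform proof is available for an arbitrary knot: the quantum data $\delta_K(n)=a(n)n^2+b(n)n+c(n)$ and the geometric data $bs(K)$ admit no general bridge. The first step is to make the quantum side explicit. Because the colored Jones function is $q$--holonomic \cite{GL}, it satisfies a linear recursion, and the resulting degree $d_+[J_{K,n}(q)]$ is eventually a quadratic quasi-polynomial \cite{Gqqp}; extracting its leading and linear periodic coefficients $a(n)$ and $b(n)$ (and the analogous data for the minimal degree) produces, for each residue class of $n$, a candidate Jones slope $4a(n)=p/q$ together with a candidate normalized Euler characteristic $2b(n)$. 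The target identity $2b(n)=\chi(F_n)/(|\bdry F_n|\,q)$ then dictates exactly which essential surface $F_n$ one must exhibit.

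The second step is to verify $SS(n)$ directly on structured families where both sides are computable, furnishing the base cases of the program. For adequate knots---in particular alternating knots---the extreme degrees of $J_{K,n}$ are read off from the all-$A$ and all-$B$ Kauffman states, and the associated state surfaces are known to be essential with controllable slope and Euler characteristic, so the identity above can be checked combinatorially. For Montesinos knots the boundary slopes and Euler characteristics come from the Hatcher--Oertel edgepath calculus, which one matches against the quasi-polynomial; torus knots and various small fusion families are handled similarly. This catalogue supplies the knots on which the conjecture holds unconditionally.

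The third step, which is the engine that enlarges the catalogue and the focus of the present paper, is to establish closure of the conjecture under satellite operations: if a companion satisfies the Slope and Strong Slope Conjectures and a pattern in the solid torus is sufficiently controlled, then the satellite does as well. Cabling, connected sum, and twisted generalized Whitehead doubling are the principal cases, the last being exactly Theorem~\ref{bothslopeconjecturesWmt}; the surface $F_n$ of the satellite is assembled from the companion's essential surface and the pattern's Seifert-type surface, and its slope and Euler characteristic are computed from those of the pieces. Iterating such closure results against the base families yields ever larger classes satisfying Conjecture~\ref{YSSC2}. The main obstacle---and the reason a full proof remains out of reach---is precisely the gap at the heart of the first step: for a generic knot neither $a(n),b(n)$ nor $bs(K)$ is explicitly computable, and even when the candidate invariants are known there is no general construction turning them into an honest essential surface realizing both the predicted slope and the predicted normalized Euler characteristic. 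The viable route forward is therefore to keep expanding the verified families and their closure under satellite operations, of which this paper's theorem is one instance, rather than to seek a single uniform argument.
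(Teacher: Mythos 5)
The statement you were asked to prove is a \emph{conjecture}, not a theorem: the paper states Conjecture~\ref{YSSC2} as an open problem and never claims a proof of it, and indeed no proof exists in the literature. Your proposal, to its credit, recognizes this explicitly (``a full proof remains out of reach'') and instead outlines a program: compute the quasi-polynomial data $a(n), b(n)$ from $q$--holonomicity, verify $SS(n)$ on base families where both sides are computable, and enlarge the verified class via closure under satellite operations. This is an accurate description of the state of the art and of this paper's actual contribution --- Theorems~\ref{bothslopeconjecturesWmt} and \ref{slopeconjectureWmtSignCond} establish exactly such a closure statement for twisted generalized Whitehead doubles, and Corollary~\ref{finite-sequence} iterates it against the base families of $B$--adequate knots and torus knots.

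Nevertheless, as a proof of the statement it has a genuine and unavoidable gap: it proves nothing beyond what the cited results already give, and in particular it does not (and cannot, by current knowledge) supply the missing bridge for an arbitrary knot between the degree data $4a(n), 2b(n)$ and an essential surface realizing both the slope and the normalized Euler characteristic. Two further cautions on the program as you describe it. First, the closure results here are \emph{conditional} in a stronger sense than you state: they require not just that the companion satisfy the Slope and Strong Slope Conjectures, but also the technical hypotheses that the period of $\delta_K(n)$ be at most $2$, that $b_1 \le 0$, that $a_1 \ne \frac{\tau}{4}$ when $b_1 = 0$, and (for Theorem~\ref{slopeconjectureWmtSignCond}) the Sign Condition; a program of iterated closure must track that these hypotheses are preserved, which is exactly what Condition~$\delta$ and the set $\mathcal{K}$ in Section~\ref{corollary} are for, and which your sketch omits. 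Second, the satellite surface is not always assembled from ``the companion's essential surface and the pattern's Seifert-type surface'': in the cases $a_1 \le \frac{\tau}{4}$ (for $\omega>0$) and $a_1 \le \frac{\tau}{4}+\frac{1}{8}$ (for $\omega<0$) the Jones surface lies entirely in the pattern space $f(V - \mathrm{int}N(k_{\omega}^{\tau}))$ and does not meet $E(K)$ at all, so the companion's Jones surface plays no role there. So: correctly diagnosed as unprovable at present, and broadly faithful to the paper's strategy, but not a proof, and imprecise about the hypotheses that make the closure step actually go through.
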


It is convenient to say that 
$K$ satisfies the 
\textit{Strong Slope Conjecture with $SS(n_0)$}, 
if a Jones slope $p/q = 4a(n_0)$ satisfies $SS(n_0)$. 
(This is weaker than the Strong Slope Conjecture in the sense that 
we do not consider if another Jones slope $p'/q'$ other than $p/q$ satisfies $SS(n_0)$ for some $n_0$. 
Actually to state our main theorems, 
we use this with $n_0 = 1$; see Theorems~\ref{bothslopeconjecturesWmt} and \ref{slopeconjectureWmtSignCond}.

\begin{remark}
\label{constant}
Let $K$ be a knot with constant $a(n)$ \(i.e. the period of $a(n)$ is $1$\).  
Then it has a single Jones slope $p/q$ and 
if it satisfies $SS(n_0)$ for some $n_0$, then $K$ satisfies the Strong Slope Conjecture. 
If $b(n)$ is also constant, then we may take $n_0 = 1$ and the Strong Slope Conjecture 
implies the Strong Slope Conjecture with $SS(1)$. 
Presently, no knots are known for which the period of $a(n)$ is not $1$. 
\end{remark}

\medskip

\begin{example}
\label{earlyexamples}
Let $K$ be a knot which appears in the following list.
\begin{enumerate}
\item Torus knots \cite{Garoufalidis}, \cite[Theorem 3.9]{KT}. 
\item Adequate knots \cite{FKP}, \cite[Lemma 3.6, 3.8]{KT}, and hence alternating knots. 
\item Non-alternating knots with up to $9$ crossings except for  $8_{20}$, $9_{43}$, $9_{44}$ \cite{Garoufalidis}, \cite{KT,Ho}.   
($8_{20}$, $9_{43}$, $9_{44}$ satisfy the Strong Slope Conjecture, 
but for these knots the coefficient $b(n)$ has period $3$.)
\item Graph knots \cite{MT,BMT_graph}. 
\end{enumerate}
Then, writing 
$\delta_K(n) = a(n) n^2 + b(n) n + c(n)$,
we have that  
$a(n), b(n)$ are constant, 
and $c(n)$ has period at most two. 
Moreover, $K$ satisfies the Slope Conjecture, the unique Jones slope satisfies $SS(1)$, 
and hence $K$ satisfies the Strong Slope Conjecture \(Remark~\ref{constant}\).  
Note also that if $K$ is nontrivial, then $b(n) = b \le 0$. 
\end{example}

\subsection{Main Results}

In this article we give further supporting evidence for the Strong Slope Conjecture and by 
examining them for the Whitehead doubles of a knot $K$, 
and more generally for its twisted generalized Whitehead doubles $W_{\omega}^{\tau}(K)$ defined below. 

Let $V$ be a standardly embedded solid torus in $S^3$ with a preferred meridian-longitude 
$(\mu_V, \lambda_V)$, and take a pattern $(V,  k_{\omega}^{\tau})$ where $k_{\omega}^{\tau}$ is a knot in the interior of $V$ 
illustrated by Figure~\ref{fig:satellite}.  
We always assume $\omega \ne 0$, for otherwise, 
$k_{\omega}^{\tau}$ is the unknot contained in a $3$--ball in $V$. 
Given a knot $K$ in $S^3$ with a preferred meridian-longitude $(\mu_K, \lambda_K)$, 
let $f \colon V \to S^3$ an orientation preserving embedding which sends the core of $V$ to the knot $K \subset S^3$ such that 
$f(\mu_V) = \mu_K$ and $f(\lambda_V) = \lambda_K$.
Then the image $f(k_{\omega}^{\tau})$ is called a {\em $\tau$--twisted, $\omega$--generalized Whitehead double of $K$} 
and is denoted by $W_{\omega}^{\tau}(K)$. 
When $\omega = 1$, $\tau = 0$, $W_{1}^0$ is the (untwisted) negative Whitehead double of $K$.
Note that the mirror image $\overline{W_{\omega}^{\tau}(K)}$ of $W_{\omega}^{\tau}(K)$ is $W_{-\omega}^{-\tau}(\overline{K})$. 

\begin{figure}[!ht]
\includegraphics[width=0.85\linewidth]{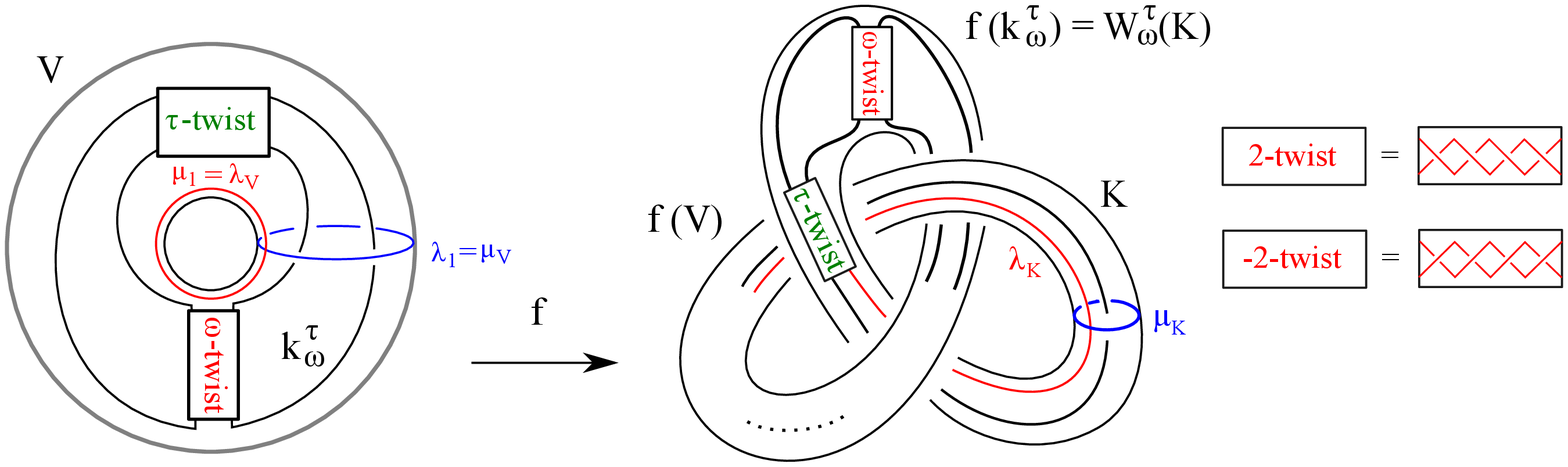}
\caption{Twisted generalized negative Whitehead double of $K$;
$f \colon V \to S^3$ is a faithful embedding and it maps the core of $V$ to $K$.}
\label{fig:satellite}
\end{figure}

For notational simplicity, in what follows, 
we use the following notation. 

\begin{convention}
\label{quadratic quasi-polynomial}
For a given knot $K$, 
let $N_K$ be the smallest nonnegative integer such that 
$d_+[J_{K,n}(q)]$ is a quadratic quasi-polynomial $\delta_K(n)=a(n) n^2 +b(n) n +c(n)$ 
for $n \ge 2N_K+1$. 
We put $a_1:=a (2N_K + 1)$, $b_1:=b (2N_K + 1)$.   
\end{convention}

The aim of this paper is to establish the Slope Conjecture and 
the Strong Slope Conjecture for twisted generalized Whitehead doubles 
in the following form. 

\begin{theorem}
\label{bothslopeconjecturesWmt}
Let $K$ be a knot in $S^3$ where $d_+[J_{K,n}(q)]$ is the quadratic quasi-polynomial 
$\delta_K(n)=a(n) n^2 +b(n) n+c(n)$ for all $n \ge 0$.
We put $a_1:=a(1)$, $b_1:=b(1)$, and $c_1:=c(1)$. 
We assume that the period of $d_+[J_{K,n}(q)]=\delta_K(n)$ is less than or equal to  $2$ and that 
$b_1 \le 0$.
Assume further that if $b_1 = 0$, 
then $a_1 \ne \frac{\tau}{4}$. 
\begin{enumerate}
\item
If $K$ satisfies the Slope Conjecture, 
then all of its twisted generalized Whitehead doubles also satisfy the Slope Conjecture.
\item
If $K$ satisfies the Strong Slope Conjecture with $SS(1)$,  
then all of its twisted generalized Whitehead doubles satisfy the Strong Slope Conjecture. 
In particular, each of these doubles has a unique Jones slope, and that slope satisfies $SS(1)$. 
\end{enumerate}
\end{theorem}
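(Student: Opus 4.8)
The plan is to reduce the entire statement to a degree computation for the colored Jones function of $W = W_\omega^\tau(K)$ together with a matching surface construction in its exterior $E(W)$. First I would determine the quadratic quasi-polynomial $\delta_W(n) = a'(n)n^2 + b'(n)n + c'(n)$ explicitly in terms of $a_1$, $b_1$, $\tau$, and $\omega$; this identifies the candidate Jones slope $4a'$ and the Strong-Slope quantity $2b'$. Then I would exhibit an essential surface $F$ in $E(W)$ whose boundary slope is $4a'$ and which realizes $2b' = \chi(F)/(|\partial F|\,q)$. This yields both conclusions at once: part $(1)$ needs only the boundary slope, while part $(2)$ additionally needs the surface together with the Euler-characteristic identity, plus the fact that the computation produces a single Jones slope.

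For the degree computation I would use the satellite (cabling) formula for the colored Jones function. Decorating the pattern $k_\omega^\tau$ with the $n$-th Jones--Wenzl idempotent and expanding the resulting skein element of the Kauffman bracket skein module $\mathcal{S}(V)$ in the core-curve basis, one obtains
\[ J_{W,n}(q) = \sum_k \beta_{n,k}(q)\, J_{K,k}(q), \]
where the coefficients $\beta_{n,k}$ are governed by the clasp and twist data; in particular the $\tau$ full twists act on the $k$-colored summand by a framing factor whose maximal degree is $\tau$ times a quadratic in $k$. Assuming the leading terms of distinct summands do not cancel --- a point that must be verified --- the maximal degree is
\[ d_+[J_{W,n}(q)] = \max_k\big(d_+[\beta_{n,k}(q)] + \delta_K(k)\big), \]
a maximum of quadratics in $k$ over a range determined by $n$. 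This is where the hypotheses enter: $b_1 \le 0$ together with the period bound forces the maximizing index to be uniquely determined, so that $\delta_W$ has a \emph{constant} leading coefficient $a'$ and hence a unique Jones slope, while the borderline case $b_1 = 0$ is kept nondegenerate precisely by the assumption $a_1 \ne \tfrac{\tau}{4}$, which prevents a tie between the twist contribution and the companion's quadratic growth. Reading off the dominating term then gives $a'$ and $b'$.

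For the geometric side I would build $F$ by gluing along the satellite torus $f(\partial V) = \partial N(K)$. In the pattern region $V \setminus N(k_\omega^\tau)$ there are standard essential surfaces arising from the clasp, whose slope on $\partial V$ is controlled by $\tau$; in the companion region I would use the essential surface guaranteed by the hypothesis that $K$ satisfies the Strong Slope Conjecture with $SS(1)$, namely the surface realizing the slope $4a_1$ with $2b_1 = \chi/(|\partial|\,q)$. Matching boundary slopes on the satellite torus and amalgamating produces an essential surface $F \subset E(W)$, after which I would compute its boundary slope, $|\partial F|$, and $\chi(F)$ additively across the gluing, incorporating the shear introduced by the $\tau$ twisting, and check that these agree with the $4a'$ and $2b'$ extracted above. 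Essentiality follows from a standard incompressibility argument across the satellite torus.

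The main obstacle is the degree computation, in two parts: establishing the satellite formula with enough control to rule out leading-term cancellation, and solving the maximization to pin down the \emph{constant} leading coefficient and the linear term. The conditions $b_1 \le 0$, period $\le 2$, and $a_1 \ne \tfrac{\tau}{4}$ are exactly what make this optimization determinate; and the role of $SS(1)$ for $K$ is that the dominating summand evaluates the companion at a color whose degree is governed by the $n=1$ data $(a_1,b_1,c_1)$, so the surface realizing $SS(1)$ becomes the companion piece of $F$. Once the algebraic and geometric computations are shown to coincide, both parts follow, with the uniqueness of the Jones slope yielding the final sentence of part $(2)$.
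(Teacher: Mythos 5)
Your overall strategy --- compute $\delta_{W^{\tau}_{\omega}(K)}$ from a satellite expansion of the colored Jones function, then realize the resulting slope and linear term by amalgamating a pattern surface with the $SS(1)$ surface for $K$ along the satellite torus --- is the same as the paper's. However, the proposal defers exactly the two points that constitute the bulk of the actual proof, and both are genuine gaps rather than routine verifications.

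First, the degree computation. The paper's expansion (Proposition~\ref{prop:equationsplitting}) is a \emph{double} sum over indices $j,k$, with the $\omega$--twist region contributing $q^{-\omega j(j+1)}$, the $\tau$--twist region contributing $q^{-\tau k(k+1)}$, and the companion entering only through even colors as $J'_{K,2k}$; the degrees of the coefficients are explicit via the formulas of \cite{GV} for $\langle s,t,u\rangle$ and the tetrahedral symbol. Your ``assume no cancellation, a point that must be verified'' is resolved in the paper not by a positivity argument but by showing that under the stated hypotheses the maximizing pair $(j,k)$ is \emph{unique} (at $(0,n)$, $(0,0)$, $(n,n)$ or $(n,0)$ according to the sign of $\omega$ and the comparison of $a_1$ with $\tau/4$ or $\tau/4+\tfrac{1}{8}$); this is precisely where $b_1\le 0$, the period bound, and $a_1\ne\tfrac{\tau}{4}$ enter, and it is also why for $\omega<0$ the Jones slope can be $-4\omega$, coming entirely from the pattern --- a case your single-index optimization sketch does not anticipate. (The separate Sign Condition machinery is only needed when $d_+=\delta$ fails for small $n$, i.e.\ for Theorem~\ref{slopeconjectureWmtSignCond}, not here.) Second, the surfaces. ``Standard essential surfaces arising from the clasp, whose slope on $\partial V$ is controlled by $\tau$'' is not enough: to match the companion surface one needs, for an \emph{arbitrary} rational value $4a_1-\tau$, an essential surface in $V-\mathrm{int}N(k^{\tau}_{\omega})$ realizing that boundary slope on $\partial V$, with known slope on $\partial N(k^{\tau}_{\omega})$, known Euler characteristic, and known numbers of boundary components on each torus. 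The paper obtains all of this from the Floyd--Hatcher classification \cite{FH} of essential surfaces in the exterior of the two-bridge link $[2,2\omega,-2]$ together with the slope and $\chi$ data of \cite{HS}: the weighted families $F_{\gamma^{+}_1}$, $F_{\gamma^{-}_5}$, $F_{\gamma^{-}_6}$ with tunable parameters $\alpha,\beta$ supply exactly the required pattern pieces, and the identity $2b_W=\chi(S)/(|\partial S|\,q)$ is then a $\gcd$ computation that also forces the matching condition $m|\partial S_K|=n|\partial S^{\tau}_{\omega}\cap T_K|$ and an orientability correction (passing to the $\partial I$--subbundle, Lemma~\ref{double}). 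Without producing these pattern surfaces and their invariants, neither the slope realization in part (1) nor the Euler-characteristic identity in part (2) can be completed.
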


In the next theorem we requires that $K$ satisfies the Sign Condition; see Definition~\ref{sign}. 
At the moment we have no examples of knots which do not satisfy the Sign Condition.

\begin{theorem}
\label{slopeconjectureWmtSignCond}
Let $K$ be a knot that satisfies the Sign Condition.
Assume that the period of $\delta_K(n)$ is less than or equal to  $2$ and  $b_1 \le 0$
where $b_1 = 0$ implies $a_1 \ne \frac{\tau}{4}$. 
If $K$ satisfies the Strong Slope Conjecture with $SS(1)$, 
then all of its twisted generalized Whitehead doubles with $\omega >0$ satisfy the Strong Slope Conjecture. 
In particular, each of these doubles has a unique Jones slope, and that slope satisfies $SS(1)$.  
\end{theorem}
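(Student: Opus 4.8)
The plan is to reduce Theorem~\ref{slopeconjectureWmtSignCond} to the machinery already developed for Theorem~\ref{bothslopeconjecturesWmt}. Both theorems assert that the twisted generalized Whitehead double $W_\omega^\tau(K)$ satisfies the Strong Slope Conjecture with $SS(1)$, under the same arithmetic hypotheses on $a_1,b_1,\tau$; the new theorem trades the blanket Slope/Strong Slope hypothesis on $K$ for the weaker \emph{Sign Condition} (Definition~\ref{sign}) at the cost of restricting to $\omega>0$. So the first step is to record the cabling/doubling formula for the maximal degree $d_+[J_{W_\omega^\tau(K),n}(q)]$ in terms of $d_+[J_{K,m}(q)]$ for the relevant colors $m$. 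Because $W_\omega^\tau(K)$ is a satellite with companion $K$ and the pattern $k_\omega^\tau$ of Figure~\ref{fig:satellite}, the colored Jones function of the double is a sum of colored Jones functions of $K$ at shifted colors, weighted by the pattern data $(\omega,\tau)$. Writing $\delta_K(n)=a(n)n^2+b(n)n+c(n)$ with period $\le 2$ and using Convention~\ref{quadratic quasi-polynomial}, I would extract $\delta_{W_\omega^\tau(K)}(n)$ as a quadratic quasi-polynomial and read off its leading coefficient $a_1'$ and linear coefficient $b_1'$.

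The second step is to identify the dominating term in that sum. This is exactly where the two theorems diverge: in Theorem~\ref{bothslopeconjecturesWmt} the hypothesis that $K$ satisfies the Slope Conjecture lets one control the cancellation among summands directly, whereas here one only has the Sign Condition. The point of the Sign Condition is to guarantee, for $\omega>0$, that a particular summand in the doubling formula strictly dominates, so that no unexpected cancellation lowers the degree. Concretely, I expect the restriction $\omega>0$ to force the twist parameter to push the degree-maximizing color to one definite end, and the Sign Condition to certify that the leading coefficient contributed by $K$ carries the sign needed for that summand to win. Once the dominating summand is isolated, the computation of $a_1'$ and $b_1'$ becomes a finite algebraic manipulation identical in form to the one in the proof of Theorem~\ref{bothslopeconjecturesWmt}, and in particular $W_\omega^\tau(K)$ will again have constant $a(n)$ (hence, by Remark~\ref{constant}, a unique Jones slope).

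The third step is geometric: produce the essential surface $F_1$ in the exterior of $W_\omega^\tau(K)$ realizing the Jones slope $4a_1'$ and satisfying $2b_1'=\chi(F_1)/(|\bdry F_1|\,q)$. Since $K$ satisfies the Strong Slope Conjecture with $SS(1)$, there is an essential surface $F$ in $E(K)$ realizing $(4a_1,2b_1)$. I would build $F_1$ by taking the portion of the satellite exterior $E(W_\omega^\tau(K))=E(K)\cup_{\bdry} (V\setminus k_\omega^\tau)$ that re-uses $F$ in the companion exterior and glues to an essential spanning surface for the pattern in $V\setminus k_\omega^\tau$; the hypothesis $b_1=0\Rightarrow a_1\ne\tau/4$ is what prevents a degenerate gluing where the slopes match up badly. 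Then I would verify incompressibility and boundary-incompressibility of the glued surface and compute its Euler characteristic and boundary-slope additively across the gluing torus.

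The main obstacle will be Step~2: establishing that the Sign Condition, together with $\omega>0$, rules out the degree-cancellation that could otherwise make $\delta_{W_\omega^\tau(K)}(n)$ fail to be the expected quadratic quasi-polynomial. Equivalently, it is controlling $d_+$ of a sum of growth-comparable terms without the full strength of the Slope Conjecture for $K$ — the Sign Condition is precisely the hypothesis engineered to make one term dominate, so the crux is verifying that its statement is strong enough to force strict dominance for all large $n$ and to pin down $b_1'$, not merely $a_1'$. The surface construction in Step~3 should then follow the same template as in Theorem~\ref{bothslopeconjecturesWmt}, since the geometric input from $K$ is unchanged.
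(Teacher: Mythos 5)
Your proposal follows essentially the same route as the paper: the paper's proof of Theorem~\ref{slopeconjectureWmtSignCond} is literally ``apply Proposition~\ref{maxdeg-signcond} and the argument in the proof of Theorem~\ref{bothslopeconjecturesWmt},'' i.e.\ use the satellite formula of Proposition~\ref{prop:equationsplitting}, invoke the Sign Condition (via Claim~\ref{claimsigncond}) to rule out cancellation among the degree-maximizing summands when $d_+=\delta$ only holds for large colors, and then reuse the glued surface $S=mS_K\cup nS_\omega^\tau$ from Sections~\ref{slope_cpnjecture_Wmt}--\ref{strong Slope_Wmt}. One small correction: the hypothesis ``$b_1=0\Rightarrow a_1\neq\tau/4$'' is not about the gluing in Step~3 but is used in the degree computation (Case~2.3 of Propositions~\ref{maxdeg_J'_omega>0}--\ref{maxdeg_J'_signcond}) to force $2\beta_0-\tau+1<0$ so that the maximizing color is pinned down in the borderline case $\alpha_0=\tau/4$.
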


\begin{remark}\
\begin{enumerate}
\item The hypotheses that $b_1 \le 0$ is actually implied by $K$ being a non-trivial knot that satisfies the Strong Slope Conjecture since the only essential surface with boundary and positive Euler characteristic is the disk. 
See also \cite[Conjecture 5.1]{KT}.
\item The hypothesis that the quasi-polynomials have period $\leq 2$ allows simplifications in the proofs that lead to Theorem~\ref{bothslopeconjecturesWmt}.  
This is in part due to the pattern for a twisted generalized Whitehead double having wrapping number $2$ and its effect upon the colored Jones polynomial for the satellite, 
see Proposition~\ref{prop:equationsplitting}.  
Indeed, allowing periods $>2$ significantly complicates Propositions~\ref{maxdeg-signcond} and \ref{maxdeg_KT_Wmt-allw}.
\item The twisted generalized Whitehead doubles considered in Theorems~\ref{bothslopeconjecturesWmt} and \ref{slopeconjectureWmtSignCond} have quasi-polynomials that are actually just polynomials; 
see Remark~\ref{no_period}. 
\item As mentioned in Remark~\ref{constant}, the coefficients $a(n)$ is constant for all known examples. 
Furthermore, it appears that for any knot known to satisfy the Strong Slope Conjecture, its unique Jones slope satisfies $SS(1)$. Hence the hypothesis that ``$K$ satisfies the Strong Slope Conjecture with $SS(1)$'' holds in Theorems~\ref{bothslopeconjecturesWmt}(2) and \ref{slopeconjectureWmtSignCond} for knots known to satisfy the Strong Slope Conjecture. 
\end{enumerate}
\end{remark}

Theorem~\ref{slopeconjectureWmtSignCond} (and Theorem~\ref{bothslopeconjecturesWmt}) has the hypothesis ``if $b_1=0$ then $a _1 \ne \frac{\tau}{4}$''. 
Since knots appearing in the next corollary satisfy 
``if $b_1=0$ then $a _1 \ne 0$'',
  taking $\tau = 0$ ensures the condition ``if $b_1=0$ then $a _1 \ne \frac{\tau}{4}$'' is always satisfied. 
So assuming $\tau = 0$, 
we have the following corollary which is proven in Section~\ref{corollary}. 
Let us note that the corollary uses Theorem~\ref{slopeconjectureWmtSignCond} because the iterated cablings and Whitehead doublings may only have $\delta=d_+$ for suitably large $n$.

\begin{corollary}
\label{finite-sequence}
Any knot obtained by a finite sequence of cabling, 
untwisted $\omega$--generalized Whitehead doublings with $\omega > 0$ 
and connected sums of $B$--adequate knots or torus knots satisfies the Slope Conjecture and 
the Strong Slope Conjecture. 
\end{corollary}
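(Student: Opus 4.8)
The plan is to induct on the number of operations used to build the knot from its atoms, carrying through the induction a package of properties strictly stronger than the conclusion itself. Precisely, for the knot $K'$ obtained at each stage I would maintain: that $K'$ satisfies the Sign Condition (Definition~\ref{sign}); that $\delta_{K'}(n)$ has period at most $2$ with $a(n)$ constant, equivalently that $K'$ has a unique Jones slope; that $b_1 \le 0$ together with the non-degeneracy ``$b_1 = 0 \Rightarrow a_1 \neq 0$''; and that $K'$ satisfies the Strong Slope Conjecture, with its unique Jones slope realizing $SS(1)$. This package is chosen precisely because it is what the inductive step consumes: having the Sign Condition in hand is exactly what lets me apply Theorem~\ref{slopeconjectureWmtSignCond} rather than Theorem~\ref{bothslopeconjecturesWmt}, and so sidestep the fact that iterated cablings and doublings generally make $\delta_{K'}=d_+[J_{K',n}(q)]$ valid only for large $n$. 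Once the package is maintained, the full Strong Slope Conjecture for $K'$ follows from $SS(1)$ together with $a(n)$ constant via Remark~\ref{constant}.

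For the base case, Example~\ref{earlyexamples}(1),(2) gives that torus knots and $B$-adequate knots satisfy both conjectures with $a(n),b(n)$ constant, $c(n)$ of period at most $2$, $b_1 \le 0$, and unique Jones slope satisfying $SS(1)$. What remains is to verify the Sign Condition directly from its defining inequalities using the known degree data for these families, and to read off the non-degeneracy ``$a_1 \neq 0$ whenever $b_1 = 0$'' from those same formulas; this is where the restriction to $\tau = 0$ is used, ensuring that the condition ``$b_1 = 0 \Rightarrow a_1 \neq \tfrac{\tau}{4}$'' reduces to the verified ``$b_1 = 0 \Rightarrow a_1 \neq 0$''.

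For the inductive step I dispose of the three operations in turn. If the final operation is an untwisted $\omega$-generalized Whitehead doubling with $\omega > 0$, then with $\tau = 0$ the inductive hypothesis furnishes exactly the hypotheses of Theorem~\ref{slopeconjectureWmtSignCond}, so the resulting double again satisfies the Strong Slope Conjecture with a unique Jones slope satisfying $SS(1)$; I then apply the degree computations in Propositions~\ref{maxdeg-signcond} and \ref{maxdeg_KT_Wmt-allw} to confirm that the double still satisfies the Sign Condition, has period $\le 2$, and obeys the $b_1$-conditions. If the final operation is a connected sum, I use multiplicativity of the colored Jones polynomial, giving $\delta_{K_1 \# K_2}(n) = \delta_{K_1}(n) + \delta_{K_2}(n) - \tfrac{n-1}{2}$; hence the quadratic coefficients add (so $a$ stays constant and the Jones slope is the sum of the two slopes), the period stays $\le 2$, and $b_1 \le -\tfrac12 < 0$, which makes the non-degeneracy automatic. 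An essential surface of the required slope is produced by boundary-summing the two $SS(1)$-surfaces across the connect-sum sphere, and additivity of the relevant Euler-characteristic data there yields the correct linear term $2b_1$, giving $SS(1)$ for the sum; the Sign Condition then follows from the additive degree formula. Finally, if the operation is a cabling, I apply the cabling formula for the colored Jones polynomial to compute $\delta$ of the cable and use the known behavior of boundary slopes and essential surfaces under cabling to build the surface, again checking that the whole package survives.

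The step I expect to be the main obstacle is verifying that the entire package---and most delicately the Sign Condition together with the period-$\le 2$ constraint---is genuinely preserved by \emph{each} operation, since this is precisely what makes the induction close. The doubling case leans on Theorem~\ref{slopeconjectureWmtSignCond}, but one must still confirm that its output again satisfies the Sign Condition so that a subsequent operation may be applied; and the cabling case is where the real work lies, both because the quasi-polynomial may only become exact for large $n$ and because one must track how the defining inequalities of the Sign Condition transform under the cabling formula.
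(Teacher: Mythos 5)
Your proposal is correct and matches the paper's argument: the paper formalizes exactly this induction by defining a set $\mathcal{K}$ of knots satisfying the Sign Condition, a ``Condition $\delta$'' (period $\le 2$, $b\le 0$, $4a\in\Z$, and $b=0\Rightarrow a\ne 0$), and the Strong Slope Conjecture, then shows $\mathcal{K}$ contains torus knots and $B$--adequate knots and is closed under the three operations, with the doubling step handled by Proposition~\ref{maxdeg-signcond} and Theorem~\ref{slopeconjectureWmtSignCond} just as you describe. The only differences are cosmetic: the paper carries the extra integrality condition $4a\in\Z$ in its inductive package and delegates the connected-sum and cabling verifications (which you sketch directly) to the companion paper \cite{BMT_graph}, checking only the added non-degeneracy clause in each case.
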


\medskip

\section{Colored Jones polynomials of generalized Whitehead doubles and their degrees} 
\label{Jones_generalized_W}

The primary goal of this section is to prove Proposition~\ref{maxdeg-signcond} below for $\tau$--twisted $\omega$--generalized Whitehead doubles with $\omega>0$.  
As a warm up, we will first prove Proposition~\ref{maxdeg_KT_Wmt-allw} for $\tau$--twisted $\omega$--generalized Whitehead doubles for all $\omega \neq 0$, but under different and stronger hypotheses.  
(However, such hypotheses are known to  not be satisfied by all knots.)

Proposition~\ref{maxdeg-signcond} requires knots to satisfy the Sign Condition.  

\begin{definition}
\label{sign}
Let $\varepsilon_n(K)$  be the sign of the coefficient of the term of the maximum degree  of $J_{K, n}(q)$. 
A knot $K$ satisfies the \textit{Sign Condition} if 
$\varepsilon_m(K) = \varepsilon_n(K)$ for $m \equiv n\ \mathrm{mod}\ 2$. 
\end{definition}

In \cite{BMT_graph}, 
we demonstrate that torus knots and $B$--adequate knots satisfy the Sign Condition. 
This fact will be used in the proof of Corollary~\ref{finite-sequence}. 
See also Definition~\ref{kappa} and Proposition~\ref{induction} which provide a broader class of knots with the Sign Condition. 

\begin{question}
Does every knot satisfy the Sign Condition?
\end{question}

\begin{proposition}[\textbf{Sign Condition}]
\label{maxdeg-signcond} 
Let $K$ be a knot in $S^3$ that satisfies the Sign Condition.    
Let $N_K$ the smallest nonnegative integer such that 
$d_+[J_{K,n}(q)]$ is a quadratic quasi-polynomial $\delta_K(n)=a(n) n^2 +b(n) n +c(n)$ 
for $n \ge 2N_K+1$. 
We put $a_1:=a (2N_K + 1)$, $b_1:=b (2N_K + 1)$, and $c_1:=c (2N_K + 1)$. 
We assume that the period of $\delta_K(n)$ is less than or equal to  $2$ and that 
$b_1 \le 0$.
Assume further that if $b_1 = 0$, 
then $a_1 \ne \frac{\tau}{4}$. 
Then the maximum degree of the colored Jones polynomial 
of its $\tau$--twisted $\omega$--generalized Whitehead double with $\omega>0$ 
is given by the quadratic polynomial

\begin{equation}
\delta_{W_{\omega}^{\tau}(K)}(n)=
\left\{ \begin{array}{ll} 
( 4a_1-\tau) n^2 +(- 4a_1+2b_1+\tau - \frac{1}{2}) n+a_1-b_1+c_1+\frac 1 2 & (a_1> \frac \tau 4), \\
-\frac 1 2 n+C_+(K,\tau)+\frac 1 2  &  (a_1 < \frac \tau 4), \\
-\frac{1}{2}n + C_+(K, \tau) + \frac{1}{2} &  (a_1 = \frac{\tau}{4}, b_1 \ne 0),
\end{array} \right. 
\end{equation}
where 
$ C_+(K, \tau)$ is a number that only depends on the knot $K$ and the number $\tau$.
Furthermore $W_{\omega}^{\tau}(K)$ satisfies the Sign Condition.
\end{proposition}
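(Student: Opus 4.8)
The plan is to begin from the cabling formula of Proposition~\ref{prop:equationsplitting}, which, because the pattern has wrapping number $2$, expresses the colored Jones polynomial of the double as an explicit finite sum
\[
J_{W_\omega^\tau(K), n}(q) = \sum_{m} c_m(n;q)\, J_{K, m}(q),
\]
where $m$ ranges over the companion colors of a single fixed parity and each coefficient $c_m(n;q)$ records the contribution of the $\tau$ framing twists, the $\omega$ clasping twists, and the $n$--dependent quantum--integer factors coming from the doubling. The maximum degree $d_+$ of the left side is the maximum over $m$ of $d_+[c_m(n;q)] + d_+[J_{K,m}(q)]$, \emph{provided} the summands attaining this maximum do not cancel. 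Establishing the degree formula thus reduces to maximizing an explicit quadratic in $m$, while establishing non--cancellation, together with the final Sign Condition assertion, is where the hypotheses on $K$ enter.

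First I would write $d_+[c_m(n;q)] + d_+[J_{K,m}(q)]$ as a concrete quadratic in $m$. The twist factor contributes a term of the shape $-\tfrac{\tau}{4} m^2 + (\text{const})$, while $d_+[J_{K,m}(q)] = \delta_K(m)$ contributes $a(m)m^2 + b(m)m + c(m)$; since the colors $m$ all have the same parity and the period of $\delta_K$ is at most $2$, the coefficients $a(m), b(m), c(m)$ collapse to the constants $a_1, b_1, c_1$. This is exactly where the period--$\le 2$ hypothesis is used. To leading order the degree of the summand is $(a_1 - \tfrac{\tau}{4}) m^2$, so the companion color, whose top value is of size $2n$, competes against the twist through the sign of $a_1 - \tfrac{\tau}{4}$.

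Next I would maximize over the admissible range of $m$, which produces the three cases. When $a_1 > \tfrac{\tau}{4}$ the quadratic is increasing and the maximum is attained at the top color $m \approx 2n$; substituting and collecting the linear and constant shifts from $b_1, c_1$, the twist, and the change of variable from $n$ to $m$ yields the quadratic answer $(4a_1 - \tau)n^2 + (-4a_1 + 2b_1 + \tau - \tfrac{1}{2})n + a_1 - b_1 + c_1 + \tfrac{1}{2}$. When $a_1 < \tfrac{\tau}{4}$, or when $a_1 = \tfrac{\tau}{4}$ with $b_1 \ne 0$ (so that the surviving linear term $b_1 m$, with $b_1 < 0$, is decreasing), the maximum is attained at a bounded color; the $m$--dependent part of the degree then contributes only a constant while the $n$--dependent quantum--integer factors in $c_m(n;q)$ supply the linear term, giving $-\tfrac{1}{2} n + C_+(K,\tau) + \tfrac{1}{2}$, with $C_+(K,\tau)$ defined to absorb the remaining color-- and $n$--independent contributions of $K$ and $\tau$ (which one checks are independent of $\omega$). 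The excluded degenerate case ``$b_1 = 0$ and $a_1 = \tfrac{\tau}{4}$'' is precisely the one in which both the quadratic and the linear terms in $m$ vanish, so that the maximum cannot be located by this comparison; the hypothesis ``$b_1 = 0 \Rightarrow a_1 \ne \tfrac{\tau}{4}$'' removes it.

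The hard part is the non--cancellation, which I would settle together with the propagation of the Sign Condition. After identifying the maximizing color (or the boundedly many colors of maximal degree), I would compute the sign of each relevant leading coefficient as the product of $\varepsilon_m(K)$---the sign of the leading coefficient of $J_{K,m}(q)$---and the explicit sign of the leading term of $c_m(n;q)$. Since $K$ satisfies the Sign Condition, $\varepsilon_m(K)$ is constant on the fixed--parity colors and in particular does not oscillate as $n$, and hence the dominant color $m \approx 2n$, grows; the twist--factor sign likewise depends only on the parity of the index. Consequently the competing leading coefficients share a common sign and reinforce rather than cancel, which validates the degree formula, and the sign of the leading coefficient of $J_{W_\omega^\tau(K), n}(q)$ is then seen to depend only on $n \bmod 2$, giving the Sign Condition for $W_\omega^\tau(K)$. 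I expect the most delicate point to be controlling this sign uniformly in $n$ in the quadratic case, where the dominant companion color itself grows with $n$: without the Sign Condition on $K$ the leading coefficient could change sign erratically, and both the degree formula and the inherited Sign Condition would be in jeopardy.
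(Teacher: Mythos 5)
Your overall strategy coincides with the paper's: expand via Proposition~\ref{prop:equationsplitting}, reduce to maximizing a quadratic in the companion color, split into the three cases according to the sign of $a_1 - \frac{\tau}{4}$, and invoke the Sign Condition both to forbid cancellation of leading terms and to propagate the Sign Condition to the double. However, two steps that carry real content are missing. First, the formula of Proposition~\ref{prop:equationsplitting} is a \emph{double} sum, over an index $j$ coming from the $\omega$--twist region and an index $k$ with companion color $2k$; packaging the $j$--sum into a coefficient $c_m(n;q)$ whose degree contributes only a constant in $m$ presupposes that, for each fixed $k$, the degree $d_+[f(j,k;q)]$ is maximized at $j=0$. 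This is exactly where the hypothesis $\omega>0$ enters and why $\omega$ disappears from the final formula; for $\omega<0$ the maximum migrates to $j=n$ and the answer acquires a $-\omega n^2$ term (Proposition~\ref{maxdeg_KT_Wmt-allw}). Your proposal never performs this maximization, so the claimed shape of $d_+[c_m(n;q)]$ is unjustified.

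Second, you never engage with $N_K$. The identity $d_+[J_{K,2k}(q)]=\delta_K(2k)=4a_1k^2+2b_1k+c_1$ is only available for $2k\ge 2N_K+1$; for the finitely many low colors the degree is uncontrolled. This is precisely why the bounded-color cases produce an unspecified constant $C_+(K,\tau)$ rather than the explicit $c_1+\frac{1}{2}$ of Proposition~\ref{maxdeg_KT_Wmt-allw}, and it is where the Sign Condition is genuinely needed: several low colors may tie for the maximal degree, and without the Sign Condition their leading terms could cancel (this is the content of Claim~\ref{claimsigncond} and of Cases 2.2--2.3 in the proof of Proposition~\ref{maxdeg_J'_signcond}). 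You instead locate the delicate cancellation in the quadratic case $a_1>\frac{\tau}{4}$, but there the maximizing color $k=n$ is \emph{unique} for large $n$ and dominates all bounded colors, so no cancellation can occur and the Sign Condition is not needed for the degree formula there (only for the final assertion that $W_{\omega}^{\tau}(K)$ itself satisfies the Sign Condition). The mechanism you describe is the right one, but it is deployed in the wrong case, and the case where it is essential is the one your outline treats as routine.
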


\begin{proposition}[\textbf{$d_+=\delta$}]
\label{maxdeg_KT_Wmt-allw}
Let $K$ be a knot in $S^3$ and $d_+[J_{K,n}(q)]$ is the quadratic quasi-polynomial 
$\delta_K(n)=a(n) n^2 +b(n) n+c(n)$ for all $n \ge 0$.
We put $a_1:=a(1)$, $b_1:=b(1)$, and $c_1:=c(1)$. 
We assume that the period of $d_+[J_{K,n}(q)]=\delta_K(n)$ is less than or equal to  $2$ and that 
$b_1 \le 0$.
Assume further that if $b_1 = 0$, 
then $a_1 \ne \frac{\tau}{4}$. 
Then, for suitably large $n$, the maximum degree of the colored Jones polynomial 
of its $\tau$--twisted $\omega$--generalized Whitehead double is given by one of the following quadratic polynomials:

If $\omega > 0$, then 
\begin{equation}
  \delta_{W_{\omega}^{\tau}(K)}(n)=
	\begin{cases}
            ( 4a_1-\tau) n^2 +(- 4a_1+2b_1+\tau - \frac{1}{2}) n+a_1-b_1+c_1+\frac 1 2 & (a_1> \frac \tau 4), \\
             -\frac 1 2 n+c_1+\frac 1 2  &  (a_1 < \frac \tau 4), \\
             -\frac{1}{2}n + c_1 + \frac{1}{2} &  (a_1 = \frac{\tau}{4}, b_1 \ne 0).
        \end{cases}
\end{equation}
In fact, if either $a_1 < \frac \tau 4$ or $a_1 = \frac{\tau}{4}$ with $b_1 \ne 0$, then $d_+[J_{W_{\omega}^{\tau}(K),n}(q)] =  \delta_{W_{\omega}^{\tau}(K)}(n)$ for all $n \geq 1$.

If $\omega < 0$, then
\begin{equation}
  \delta_{W_{\omega}^{\tau}(K)}(n)=
\begin{cases}
(4a_1- \tau-\omega-\frac{1}{2}) n^2+(-4a_1+2b_1 +\omega+ \tau + 1)n+a_1-b_1+c_1-\frac 1 2
      &(a_1 > \frac{\tau}{4}+\frac{1}{8}) \\
- \omega n^2 + (\omega+\frac 1 2) n  + a_1+b_1+c_1-\frac 1 2 & (a_1 < \frac{\tau}{4}+\frac{1}{8})\\
- \omega n^2 + (\omega+\frac 1 2) n  + a_1+b_1+c_1-\frac 1 2 & (a_1 = \frac{\tau}{4}+\frac{1}{8}, b_1 \neq 0).
\end{cases}
\end{equation}
In fact, if either $a_1< \frac{\tau}{4} +\frac{1}{8}$ or $a_1 = \frac{\tau}{4} +\frac{1}{8}$ with $b_1 \ne 0$, 
then $d_+[J_{W_{\omega}^{\tau}(K),n}(q)] = {\delta}_{W_{\omega}^{\tau}(K)}(n)$ for all $n \geq 1$.
\end{proposition}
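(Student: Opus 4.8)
The plan is to base everything on the satellite expansion of Proposition~\ref{prop:equationsplitting}. The pattern $k_\omega^\tau$ has wrapping number $2$ and winding number $0$, so resolving the $\omega$-clasp and decomposing the two antiparallel $n$-coloured strands splits the colored Jones polynomial of the double into finitely many explicit summations, each a sum of the form $\sum_i g_i(\omega,\tau,n;q)\,J_{K,2i-1}(q)$ over the odd companion colours $2i-1$, $1\le i\le n$. The coefficients $g_i$ are explicit products of quantum factors recording the $\tau$ full twists and the clasp; they depend on the satellite colour $n$ as well as on $i$, and their behaviour differs according to the sign of $\omega$. The first step is to read off from Proposition~\ref{prop:equationsplitting} the exact maximum degree $d_+[g_i]$ as a quadratic in $i$ with coefficients that are explicit affine functions of $n$.

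Next I would insert the companion degrees. Since every summand colour $2i-1$ is odd and the period of $\delta_K$ is at most $2$, the odd phase of the quasi-polynomial is the single triple $(a_1,b_1,c_1)$, so $\delta_K(2i-1)=a_1(2i-1)^2+b_1(2i-1)+c_1$ for all $i$. Here the hypothesis $d_+[J_{K,n}(q)]=\delta_K(n)$ for \emph{all} $n\ge 0$ is essential: it gives $d_+[J_{K,2i-1}(q)]=\delta_K(2i-1)$ even for the smallest colours, with no uncontrolled low-order range of the kind that forces the knot-dependent constant $C_+(K,\tau)$ in Proposition~\ref{maxdeg-signcond}. Consequently the maximum degree of each summand is the \emph{exact} quadratic
\[ D_i(n)=d_+[g_i]+a_1(2i-1)^2+b_1(2i-1)+c_1, \]
and the degree of $J_{W_\omega^\tau(K),n}$ will be the largest of these, provided no cancellation occurs.

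The core computation is to maximise $D_i(n)$ over $1\le i\le n$. The coefficient of $(2i-1)^2$ in $D_i$ is $a_1-\tfrac{\tau}{4}$ when $\omega>0$ and $a_1-\tfrac{\tau}{4}-\tfrac{1}{8}$ when $\omega<0$, the extra $\tfrac18$ reflecting the additional $n$-growth carried by the clasp coefficient when $\omega<0$; the sign of this number decides whether the dominant contribution comes from the top colour (maximiser $i=n$) or from a lower-order part of the splitting. Above the threshold the term at $i=n$ wins, where $\delta_K(2n-1)$ supplies the leading $4a_1$ and the twist and clasp supply the remaining $-\tau$ (resp.\ $-\tau-\omega-\tfrac12$), giving the quadratic polynomials in the statement. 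Below the threshold the top-colour quadratic is beaten by the low-order contribution, producing the linear degree $-\tfrac12 n+c_1+\tfrac12$ when $\omega>0$ and the clasp-dominated degree $-\omega n^2+(\omega+\tfrac12)n+a_1+b_1+c_1-\tfrac12$ when $\omega<0$; in these below-threshold cases the dominant part is present already for every $n\ge 1$, which is the content of the ``$d_+=\delta$ for all $n\ge 1$'' addenda. Exactly on the threshold the quadratic coefficient vanishes and the linear coefficient, governed by $b_1$, decides; the standing hypothesis that $b_1=0$ forces $a_1\ne\tfrac{\tau}{4}$ (resp.\ $a_1\ne\tfrac{\tau}{4}+\tfrac18$) removes the one degenerate case in which both coefficients vanish.

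The main obstacle is the no-cancellation step: knowing each $D_i(n)$ exactly does not by itself guarantee that $d_+[J_{W_\omega^\tau(K),n}]$ equals $\max_i D_i(n)$, because two contributions sharing the maximal degree could have leading coefficients that cancel. The plan is to show that under the stated hypotheses the maximising contribution is \emph{unique}. Above the threshold the discrete quadratic $i\mapsto D_i(n)$ is strictly dominated by its value at $i=n$ for all suitably large $n$; below the threshold the low-order contribution strictly beats every colour term for all $n\ge 1$; and on the threshold the condition $b_1\le 0$ together with the non-degeneracy hypothesis prevents two indices from tying in degree. With a single contribution of top degree, its leading coefficient is a product of the nonvanishing leading coefficient of $g_i$, explicit from Proposition~\ref{prop:equationsplitting}, and the nonzero leading coefficient of $J_{K,2i-1}(q)$, so it cannot vanish. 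This is precisely the place where the present stronger hypothesis $d_+=\delta$ replaces the Sign Condition used in Proposition~\ref{maxdeg-signcond}: controlling all colours exactly both secures uniqueness of the maximiser and pins the additive constant to $c_1$. Substituting the unique maximiser into $D_i(n)$ and simplifying then yields each displayed polynomial, completing the proof.
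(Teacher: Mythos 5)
Your proposal follows essentially the same route as the paper: it starts from the splitting formula of Proposition~\ref{prop:equationsplitting}, uses the period-$\leq 2$ hypothesis together with the oddness of the companion colors to fix a single phase $(a_1,b_1,c_1)$, exploits the global hypothesis $d_+=\delta$ to pin down every summand's degree exactly (which is precisely what removes the constant $C_+(K,\tau)$ of Proposition~\ref{maxdeg-signcond}), locates the thresholds $a_1=\frac{\tau}{4}$ and $a_1=\frac{\tau}{4}+\frac{1}{8}$ by maximizing the resulting discrete quadratic, and resolves the cancellation issue by uniqueness of the maximizing index, exactly as the paper does via the unique maximizers $(j,k)=(0,n)$, $(0,0)$, $(n,n)$, or $(n,0)$. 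The only differences are organizational: you work with the unnormalized polynomial and fold the clasp index $j$ into the coefficients $g_i$, whereas the paper passes to $J'$ and maximizes over $j$ and $k$ separately before transforming back.
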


\begin{remark}
\label{no_period}
Propositions~\ref{maxdeg-signcond} and \ref{maxdeg_KT_Wmt-allw} say that 
even when $\delta_K(n)$ has period $2$, 
$\delta_{W_{\omega}^{\tau}(K)}(n)$  is a usual polynomial rather than a
quasi-polynomial.  
\end{remark}

In Subsection~\ref{computation_Jones} we first derive a formula of the colored Jones function of $W_{\omega}^{\tau}(K)$ 
using a slightly different normalization that simplifies calculation.
For knot $K$ and a nonnegative integer $n$, set
\[J'_{K, n}(q):=\frac{J_{K, n+1}(q)}{J_{\bigcirc, n+1}(q)}\] 
so that $J'_{\bigcirc, n}(q)=1$ for the unknot $\bigcirc$ and 
$J'_{K, 1}(q)$ is the ordinary Jones polynomial of a knot $K$. 

For our proof of Proposition~\ref{maxdeg_KT_Wmt-allw},  
we study the behavior of the maximum degrees of the colored Jones functions of $\tau$-twisted  $\omega$-generalized Whitehead doubles with $\omega > 0$ and 
$\omega < 0$, respectively. 
Let $\delta'_{K}(n)$ be the maximum degrees of this normalized colored Jones function for large $n$. 

Let us introduce the Normalized Sign Condition which is analogous to the Sign Condition for $K$.

\begin{definition}
\label{sign'}
Let $\varepsilon'_n(K)$  be the sign of the coefficient of the term of the maximum degree  of $J'_{K, n}(q)$. 
A knot $K$ satisfies the \textit{Normalized Sign Condition} if 
$\varepsilon'_m(K) = \varepsilon'_n(K)$ for $m \equiv n\ \mathrm{mod}\ 2$. 
\end{definition}

\begin{proof}[Proof of Propositions~\ref{maxdeg-signcond} and \ref{maxdeg_KT_Wmt-allw}]
By the above definition of $J'_{K, n}(q)$, 
if $K$ satisfies the Sign Condition, 
then it also satisfies the Normalized Sign Condition.  
With respect to this normalization we first establish 
Propositions~\ref{maxdeg_J'_omega>0}, \ref{maxdeg_J'_omega<0} and \ref{maxdeg_J'_signcond} in Subsection~\ref{computation_degrees}, 
which derive Propositions~\ref{maxdeg_KT_Wmt-allw} and \ref{maxdeg-signcond} by using the transformation described below. 

To derive Propositions~\ref{maxdeg-signcond} and \ref{maxdeg_KT_Wmt-allw} from 
Propositions~\ref{maxdeg_J'_signcond}, \ref{maxdeg_J'_omega>0} and \ref{maxdeg_J'_omega<0}, 
we apply the transformation with respect to normalization. 
Recall from the Introduction that $J_{\bigcirc, n+1}(q) = [n+1] = (-1)^{n}\langle n \rangle$ (where $[n+1]$ and $\langle n \rangle$ are defined in Equation~(\ref{def:<n>})). 
Then, in general, we have 
\begin{equation}
\label{KT-G-relation.formula}
\langle n \rangle J'_{K, n}(q) 
= \langle n \rangle \frac{J_{K, n+1}(q)}{J_{\bigcirc, n+1}(q)}
= \langle n \rangle  \frac{J_{K, n+1}(q)}{(-1)^n \langle n \rangle}
= (-1)^n J_{K, n+1}(q),\ J'_{K, 0}(q) =1,\ J_{K, 1}(q) =1. 
\end{equation}

This implies: 
\[
d_{+}\left[ \frac {q^{(n+1)/2}-q^{-(n+1)n/2}}{q^{1/2}-q^{-1/2}}\right] d_{+} [J'_{K, n}(q)] = 
d_{+} [ J_{K, n+1}(q)]. 
\]
Since $\displaystyle d_{+}\left[ \frac {q^{(n+1)/2}-q^{-(n+1)n/2}}{q^{1/2}-q^{-1/2}}\right] = \frac{n}{2}$, 
we have
\[\delta'_K(n) = \delta_K(n+1) - \frac{1}{2}n\]
and 
\[\delta_{W_{\omega}^{\tau}(K)}(n)=\delta'_{W_{\omega}^{\tau}(K)}(n-1)+\frac 1 2 n-\frac 1 2. 
\]

Note also that 
\begin{eqnarray*}
 & & \alpha_0=a_1, \ \beta_0=2a_1+b_1-\frac 1 2, \ \gamma_0=a_1+b_1+c_1.
\end{eqnarray*}
\end{proof}

\subsection{Computations of colored Jones polynomials of $W_{\omega}^{\tau}(K)$}
\label{computation_Jones}
 In this subsection we will compute the normalized colored Jones polynomial $J'_{W_{\omega}^{\tau}(K), n}(q)$ instead of $J_{W_{\omega}^{\tau}(K), n}(q)$. 
We begin by recalling the following functions with respect to $q$ for  non-negative integers $s,t,u$. 
See \cite{MV}.

\begin{eqnarray}
\label{def:<n>}
&& \langle s \rangle  := (-1)^s [s+1], \quad [s]= \frac{q^{s/2} - q^{-s/2}}{q^{1/2} - q^{-1/2}},
\quad [s]! = \prod_{t=1}^s[t] 
\end{eqnarray}

\begin{eqnarray}
&& \langle s,t,u \rangle := (-1)^{i+j+k} 
\frac {[i+j+k+1]![i]![j]![k]!}{[s]![t]![u]!}, 
\end{eqnarray}
where $i=\frac {t+u-s}{2}$, $j=\frac {u+s-t}{2}$, and $k=\frac {s+t-u}{2}$,
\begin{eqnarray}
&& \delta(u;s,t):=(-1)^{\frac {s+t+u} 2} q^{-\frac{1}{8}(u^2-s^2-t^2+2u-2s-2t)},
\end{eqnarray}
and

\begin{equation}\label{tetsymb}
\left\langle \begin{array}{ccc} A & B & E \\ D & C & F \end{array}\right\rangle
=\frac {\prod_{i=1}^3 \prod_{j=1}^4 [b_i-a_j]!}
  {[A]![B]![C]![D]![E]![F]!}
  \sum_{\max\{a_j\} \le s \le \min \{b_i \}}
  \frac {(-1)^s [s+1]!} {\prod_{i=1}^3 [b_i-s]!\prod_{j=1}^4 [s-a_j]!}, 
\end{equation}
where  
$a_1=\frac {A+B+E} 2$, $a_2=\frac {B+D+F} 2$, $a_3=\frac {C+D+E} 2$, 
$a_4=\frac {A+C+F} 2$, $\Sigma=A+B+C+D+E+F$, 
$b_1=\frac {\Sigma-A-D} 2$, $b_2=\frac {\Sigma-E-F} 2$, and  
$b_3=\frac {\Sigma-B-C} 2$. 

We will also use the following equalities introduced by Masbaum and Vogel \cite{MV}.  

\begin{equation}
\label{pall}
\bpc{(82,25)
	\put(-10,38){\pcr{para}{0.4}}
	\put(42,16){\nos $s$}
	\put(42,-17){\nos $t$} } 
=\sum_u \frac {\langle u \rangle}{\langle s,t,u \rangle} 
\bpc{(82,25)
	\put(0,-1){\pcr{flip-h}{0.4}}
	\put(11,29){\nos $s$}
	\put(11,-26){\nos $t$} 
	\put(38,10){\nos $u$}
	\put(65,28){\nos $s$}
	\put(64,-26){\nos $t$}} .
\end{equation}

\vskip 5truemm
\noindent
Here the sum is over those colors $u$ such that the triple $(s,t,u)$ satisfies 
$s+t+u \equiv 0 \pmod 2$ and $|s-t| \le u \le s+t$. 

\begin{equation}
\label{twist-tri}
\hskip 5truemm 
\bpc{(62,25)
	\put(3,-2){\pcr{twist-n}{0.4}}
	\put(4,20){\nos $s$}  
	\put(4,-22){\nos $t$}
	\put(55,10){\nos $u$} } \quad 
=\delta (u; s, t)^{-1} 
\bpc{(62,25)
	\put(0,-1){\pcr{tri}{0.4}}
	\put(6,28){\nos $s$}
	\put(6,-28){\nos $t$} 
	\put(45,10){\nos $u$}}, \quad
\bpc{(52,25)
	\put(3,-2){\pcr{twist-p}{0.4}}
	\put(5,20){\nos $s$}  
	\put(5,-22){\nos $t$}
	\put(55,10){\nos $u$} } \quad \quad
=\delta (u; s, t) 
\bpc{(62,25)
	\put(0,-1){\pcr{tri}{0.4}}
	\put(6,28){\nos $s$}
	\put(6,-28){\nos $t$} 
	\put(45,10){\nos $u$}}
\end{equation}  

\vskip 5truemm

\begin{equation}
\label{curl}
\hskip 5truemm 
\bpc{(45,25)
	\put(3,-2){\pcr{r1m_n}{0.5}}
	\put(30,20){\nos $s$}} 
=\delta (0; s, s)^{-1} 
\bpc{(22,25)
	\put(0,-1){\pcr{1-string}{0.5}}
	\put(10,3){\nos $s$}}, \quad \quad 
\bpc{(45,25)
	\put(3,-2){\pcr{r1m_p}{0.5}}
	\put(30,20){\nos $s$}} 
=\delta (0; s, s) 
\bpc{(82,25)
	\put(0,-1){\pcr{1-string}{0.5}}
	\put(10,3){\nos $s$}} 
\end{equation}

\vskip 0.5truecm

\begin{equation}\label{tri-open}
\bpc{(52,25)
	\put(5,-2){\pcr{tri-open}{0.3}}
	\put(22,22){\nos $u$}
	\put(3,0){\nos $s$}
	\put(32,0){\nos $t$}
	\put(22,-22){\nos $u$}} 
=\frac {\langle s,t,u \rangle} {\langle u \rangle} 
\bpc{(82,25)
	\put(0,-1){\pcr{1-string}{0.5}}
	\put(10,3){\nos $u$}} 
\end{equation}

\vskip 1truecm

\begin{equation}\label{tetra-s}
\bpc{(42,25)
	\put(-20,15){\pcr{tetra-s}{0.3}}
	\put(-5,10){\nos $B$}
	\put(-10,-9){\nos $F$} 
	\put(15,26){\nos $A$}
	\put(23,10){\nos $E$}
	\put(10,-9){\nos $D$} 
	\put(30,-9){\nos $C$}} 
=\frac {\left\langle \begin{array}{ccc} A & B & E \\ D & C & F \end{array}\right\rangle}
{\langle A,F,C \rangle} \quad
\bpc{(42,25)
	\put(-10,12){\pcr{tetra-s-red}{0.3}}
	\put(5,-7){\nos $F$} 
	\put(25,13){\nos $A$}
	\put(35,-7){\nos $C$} } 
\end{equation}

\medskip

In the following we will use the following symbols. 

\begin{figure}[!ht]
\includegraphics[width=0.5\linewidth]{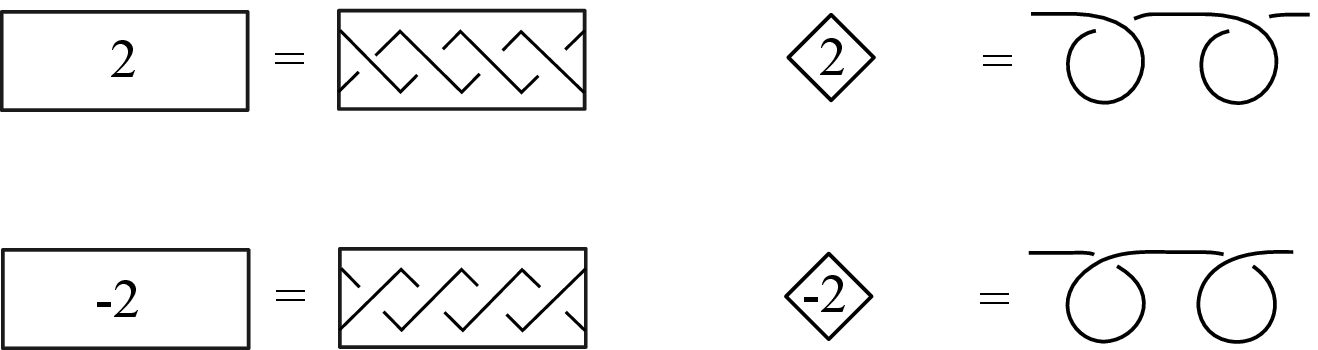}
\caption{}
\label{twist_curl}
\end{figure}

\begin{lemma}
\label{lem:twistreduction}	
\begin{multline*}
		\pcr{twist-1}{0.4} 
		=  \pcr{twist-2}{0.4} 
		=  \sum_{i=0}^n \frac{\langle 2i \rangle}{\langle n,n,2i \rangle} \pcr{twist-3}{0.4}\\
		= \sum_{i=0}^n \frac{\langle 2i \rangle}{\langle n,n,2i \rangle} q^{-\alpha i (i+1)} \pcr{twist-4}{0.4}
\end{multline*}

\end{lemma}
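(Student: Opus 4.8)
The plan is to read the displayed chain of equalities as three successive applications of the Masbaum--Vogel skein identities recalled above, all carried out on the two parallel strands of color $n$ that make up the doubling pattern; only the fusion formula~(\ref{pall}) and the twist formula~(\ref{twist-tri}) are needed here, while the identities~(\ref{curl}), (\ref{tri-open}), and (\ref{tetra-s}) are reserved for later reductions.

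First I would observe that the leftmost equality, twist-1 $=$ twist-2, is a framing-preserving diagrammatic isotopy: the two pictures of Figure~\ref{twist_curl} represent the same colored framed tangle, so they agree in the skein module with no accompanying coefficient, and no skein relation is invoked. Next, to obtain the middle term twist-2 $= \sum_{i=0}^n \frac{\langle 2i\rangle}{\langle n,n,2i\rangle}\,$twist-3, I would apply the fusion formula~(\ref{pall}) to the two $n$-colored strands, taking $s=t=n$. The admissibility conditions attached to~(\ref{pall}), namely $s+t+u\equiv 0\pmod 2$ and $|s-t|\le u\le s+t$, then force $u$ to be even with $0\le u\le 2n$; writing $u=2i$ recovers exactly the index range $i=0,1,\dots,n$, and the coefficient $\langle u\rangle/\langle s,t,u\rangle$ becomes $\langle 2i\rangle/\langle n,n,2i\rangle$, as required.

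For the final equality I would resolve the twisting acting on the fused edge of color $2i$ using the twist formula~(\ref{twist-tri}), applied once for each full twist of the pattern; each application contributes a factor $\delta(2i;n,n)^{\pm 1}$. Substituting $u=2i$ and $s=t=n$ into the definition of $\delta(u;s,t)$ gives $\delta(2i;n,n)=(-1)^{n+i}\,q^{\frac14 n^2+\frac12 n}\,q^{-\frac12 i(i+1)}$, whose sole $i$-dependent $q$-contribution is $q^{-\frac12 i(i+1)}$. Accumulating these factors across the twist region therefore produces the stated $q^{-\alpha i(i+1)}$, with $\alpha$ fixed by the signed number of full twists shown in twist-3.

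I expect the main obstacle to be the bookkeeping in this last step rather than any conceptual difficulty. One must track not only the $i$-dependent exponent but also the sign $(-1)^{n+i}$ and the $i$-independent power $q^{\frac14 n^2+\frac12 n}$ contributed by every $\delta$-factor, and verify that, for the handedness and parity of the twisting encoded by $\alpha$, these combine into precisely the data absorbed into the reduced diagram twist-4. In particular the $(-1)^i$ piece of the sign is genuinely $i$-dependent, so a little care is needed to confirm that it cancels (for an even number of twists) or is otherwise carried by the normalization of twist-4; once this is checked, the chain of equalities follows.
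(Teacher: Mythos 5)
Your first two equalities track the paper's proof exactly: the framed isotopy for each handedness, then the fusion formula (\ref{pall}) with $s=t=n$, whose admissibility conditions give $u=2i$ for $0\le i\le n$ and the coefficient $\langle 2i\rangle/\langle n,n,2i\rangle$. The divergence, and the gap, is in the last step. The paper resolves the remaining twisting with the curl formula (\ref{curl}) applied to the \emph{fused} edge of color $2i$: each curl contributes $\delta(0;2i,2i)^{-1}=q^{-i(i+1)}$, so $\alpha$ of them give exactly $q^{-\alpha i(i+1)}$ with no residue. You instead use (\ref{twist-tri}), which removes one crossing adjacent to the trivalent vertex at a time; since $\alpha$ full twists contain $2\alpha$ crossings, you need $2\alpha$ applications (not one per full twist), and the accumulated factor is $\delta(2i;n,n)^{\pm2\alpha}$. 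By your own computation this equals $q^{-\alpha i(i+1)}\cdot q^{\alpha(\frac12 n^2+n)}$ (the signs square away). That extra factor $q^{\alpha(\frac12 n^2+n)}$ is a genuine nontrivial power of $q$ depending on $n$, and it cannot be ``absorbed into the reduced diagram twist-4'' or ``carried by the normalization of twist-4'': the right-hand picture is just a colored trivalent graph and carries no normalization. If this factor survived, the stated coefficient $q^{-\alpha i(i+1)}$ would be false.

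What actually kills the residue is the framing correction. Because the two strands in the twist region are antiparallel, the $2\alpha$ crossings change the blackboard framing, and the twist region in the lemma carries the compensating curls on the individual $n$--colored strands (Figure~\ref{fig:correct_framing}). Evaluating those by (\ref{curl}) at color $n$ contributes $\delta(0;n,n)^{\mp2\alpha}=q^{\mp\alpha(\frac12 n^2+n)}$, which cancels your residue via the identity $\delta(2i;n,n)^2=\delta(0;2i,2i)^{-1}\,\delta(0;n,n)^2$. So your route can be completed, but only by bringing formula (\ref{curl}) back in --- which you explicitly set aside --- and by tracking the correcting curls explicitly; as written, the final step asserts a cancellation that does not happen on its own. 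The role of the initial framed isotopy in the paper's proof is precisely to trade the twists together with their correcting curls for curls on the fused edge, so that a single application of (\ref{curl}) at color $2i$ finishes the argument with no leftover factor.
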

\begin{proof}
The first equality is due to the framed isotopy shown here for each handedness:
\[\pcr{twist_curl2}{0.35}\]
Next we exploit the formula (\ref{pall}) and then formula (\ref{curl}).
\end{proof}

\medskip

\begin{lemma}\label{lem:collapse}
\[
		\hskip 1.75truecm 
		\bpc{(30,30)
			\put(0,0){\pcr{WD-CJP2c}{0.4}}
			\put(12,37){\nos $2k$}
			\put(11,15){\nos $n$}
			\put(11,-12){\nos $n$}
			\put(18,9){ $2j$}
			\put(35,15){\nos $n$}
			\put(35,-12){\nos $n$}
			\put(12,-34){\nos $2k$}
		}   
	\quad \quad = \quad 
		\frac {\left\langle \begin{array}{ccc} n & n & 2j \\ n & n & 2k \end{array}\right\rangle}
		{\langle n,2k,n \rangle} \quad 
		\bpc{(52,25)
			\put(5,-2){\pcr{tri-open}{0.3}}
			\put(22,22){\nos $2k$}
			\put(2,0){\nos $n$}
			\put(30,0){\nos $n$}
			\put(22,-22){\nos $2k$}}
	= \quad 
		\frac {\left\langle \begin{array}{ccc} n & n & 2j \\ n & n & 2k \end{array}\right\rangle}
		{\langle 2k \rangle} 
		\bpc{(82,25)
			\put(0,-1){\pcr{1-string}{0.5}}
			\put(10,3){\nos $2k$}} 
\]
\end{lemma}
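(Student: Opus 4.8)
The plan is to recognize the left-hand diagram as a tetrahedral network and to reduce it in two steps, using only the recoupling identities already recorded above. First I would apply the tetrahedron-reduction formula~(\ref{tetra-s}) to the left-hand diagram. Matching the colors of its six edges against those of the network in~(\ref{tetra-s}) gives the assignment $A=B=C=D=n$, $E=2j$, and $F=2k$: the two edges colored $2k$ play the role of the strands $A,C$ meeting the $F$-edge, while the internal edge colored $2j$ is the one being absorbed. With this identification the tetrahedral coefficient is exactly $\left\langle \begin{array}{ccc} n & n & 2j \\ n & n & 2k \end{array}\right\rangle$ and the normalizing theta is $\langle A,F,C\rangle=\langle n,2k,n\rangle$, so~(\ref{tetra-s}) produces the first claimed equality, leaving the open-triangle network appearing in~(\ref{tri-open}) with the pair of $n$-strands forming the bubble and $2k$ as the through-strand.

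For the second equality I would apply the bubble-removal identity~(\ref{tri-open}) to this theta network with $s=t=n$ and $u=2k$, which replaces it by $\dfrac{\langle n,n,2k\rangle}{\langle 2k\rangle}$ times a single strand colored $2k$. Substituting into the output of the first step, the scalar factor becomes
\[
\frac{\left\langle \begin{array}{ccc} n & n & 2j \\ n & n & 2k \end{array}\right\rangle}{\langle n,2k,n\rangle}\cdot\frac{\langle n,n,2k\rangle}{\langle 2k\rangle}.
\]
It remains to cancel the two theta factors, and for this I would invoke the symmetry of the trihedron coefficient: its defining expression is invariant under permutations of its three arguments (permuting them only permutes the exponents $i,j,k$), so $\langle n,2k,n\rangle=\langle n,n,2k\rangle$. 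After cancellation the scalar collapses to $\dfrac{1}{\langle 2k\rangle}\left\langle \begin{array}{ccc} n & n & 2j \\ n & n & 2k \end{array}\right\rangle$, giving the second claimed equality.

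The computation is routine once these identifications are made; the only step needing genuine care is matching the planar left-hand diagram to the abstract tetrahedral network of~(\ref{tetra-s}) with the correct pairing of the six colors, since mislabeling which edges are external or which is absorbed would yield the wrong theta in the denominator. Confirming that the absorbed edge is the one colored $2j$ and that the surviving through-strand is colored $2k$ is exactly what fixes both the tetrahedral coefficient and the normalization $\langle n,2k,n\rangle$, after which the symmetry of $\langle\cdot,\cdot,\cdot\rangle$ finishes the proof.
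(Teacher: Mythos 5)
Your proposal is correct and follows exactly the paper's argument, which is simply to apply formula~(\ref{tetra-s}) and then formula~(\ref{tri-open}); the color identifications and the symmetry $\langle n,2k,n\rangle=\langle n,n,2k\rangle$ you spell out are the routine details the paper leaves implicit.
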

\vskip 1truecm
\begin{proof}
	Apply formula (\ref{tetra-s}) and then (\ref{tri-open}).
\end{proof}

\begin{lemma}
\label{graphicalcoloredjones}
For a $0$ framed diagram of any knot $K$:
 \[ 
 \langle n \rangle J'_{K,n}(q) = 
 \bpc{(30,30)
  \put(0,0){\pcr{calcK3}{0.4}}
  \put(12,0){ $K$}
   \put(9,37){\nos $n$}
     }
     \]
\end{lemma}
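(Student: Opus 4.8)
The plan is to recognize the diagram on the right as the Kauffman bracket skein invariant of $K$ decorated by the $n$-th Jones--Wenzl idempotent, and then to match it against $\langle n \rangle J'_{K,n}(q)$ using Equation~(\ref{KT-G-relation.formula}), which has already been established.

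First I would make precise what the picture denotes. Starting from a $0$-framed diagram of $K$, one replaces its strand by the $n$-cable carrying the color $n$ (the $n$-th Jones--Wenzl idempotent) and evaluates the resulting closed skein element in $S^3$; call this graphical quantity $\langle K \rangle_n$. The hypothesis that the diagram is $0$-framed is what makes this well defined as a knot invariant: by the curl relation~(\ref{curl}) each kink contributes a factor $\delta(0;n,n)^{\pm 1}$, so the total framing correction is $\delta(0;n,n)^{w}$ with $w$ the writhe, and a $0$-framed diagram has $w = 0$, so these factors cancel.

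The main step is the identification $\langle K \rangle_n = (-1)^n J_{K,n+1}(q)$. This is the standard translation between the Temperley--Lieb/Kauffman bracket skein theory used throughout this section and the quantum $\mathfrak{sl}_2$ (Reshetikhin--Turaev) colored Jones polynomial: the skein color $n$ corresponds to the $(n+1)$-dimensional irreducible representation, and for a single knot the two normalizations differ by the global sign $(-1)^n$. The sign is pinned down on the unknot, where $\langle \bigcirc \rangle_n = \langle n \rangle = (-1)^n[n+1] = (-1)^n J_{\bigcirc,n+1}(q)$ by~(\ref{def:<n>}); for a general $K$ it follows because the skein-theoretic and representation-theoretic evaluations compute the same underlying $\mathfrak{sl}_2$ invariant in the variable $q$, differing only by this normalization fixed on the unknot.

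Finally I would invoke Equation~(\ref{KT-G-relation.formula}), which gives $\langle n \rangle J'_{K,n}(q) = (-1)^n J_{K,n+1}(q)$. Chaining this with the identification of the previous step yields
\[ \langle K \rangle_n = (-1)^n J_{K,n+1}(q) = \langle n \rangle J'_{K,n}(q), \]
which is exactly the claimed equality. The main obstacle is the middle step: carefully tracking the sign $(-1)^n$ and the index shift between the skein color $n$ and the representation dimension $n+1$, and confirming that the $0$-framing hypothesis removes all framing dependence. Everything else is normalization bookkeeping anchored by the unknot computation.
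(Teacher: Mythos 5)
Your proposal is correct and follows essentially the same route as the paper: the paper identifies the right-hand side with $(-1)^{n}J_{K,n+1}(q)$ by citing \cite[Section~5]{Mas} and then applies Equation~(\ref{KT-G-relation.formula}), exactly the two steps you carry out. The only difference is that where the paper invokes Masbaum's result directly, you reconstruct the standard skein-theoretic normalization argument (anchored on the unknot and the $0$-framing hypothesis), which is the content of that citation.
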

\vskip 1truecm
\begin{proof}
It follows from \cite[Section~5]{Mas} that the right hand side describes 
$(-1)^{n}J_{K, n+1}(q)$.  
On the other hand, 
(\ref{KT-G-relation.formula}) shows that 
\[
\langle n \rangle  J'_{K, n}(q) 
= (-1)^n J_{K, n+1}(q).
\]
Thus we obtain the desired equality.
\end{proof}

\begin{proposition}  
\label{prop:equationsplitting} 
\[ J'_{W_{\omega}^{\tau}(K), n}(q)  = \frac 1 {\langle n \rangle}\sum_{j,k=0}^n  
	\frac{\langle 2j \rangle}{\langle n,n,2j \rangle}\frac{\langle 2k \rangle}{\langle n,n,2k \rangle} \left\langle \begin{array}{ccc} n & n & 2j \\ n & n & 2k \end{array}\right\rangle
	q^{-\omega j(j+1) -\tau k(k+1)} J'_{K,2k}(q).
\] 
\end{proposition}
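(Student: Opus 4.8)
The plan is to evaluate $\langle n \rangle J'_{W_\omega^\tau(K), n}(q)$ as a linear skein element and then divide by $\langle n \rangle$. First I would invoke Lemma~\ref{graphicalcoloredjones} to present $\langle n \rangle J'_{W_\omega^\tau(K), n}(q)$ as the bracket evaluation of the trivalent graph obtained by coloring a zero-framed diagram of $W_\omega^\tau(K)$ by $n$. Because the pattern $k_\omega^\tau$ has wrapping number $2$, this diagram consists of two parallel $n$-colored strands running along the companion $K$, connected through two localized regions: the clasp, which carries the $\omega$ twists, and the region recording the $\tau$ twists of the pattern, with the two strands otherwise running parallel along $K$. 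The point of this setup is to isolate the $\omega$- and $\tau$-twist regions so that each can be handled by a single application of Lemma~\ref{lem:twistreduction}.

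Next I would reduce the two twist regions. Applying Lemma~\ref{lem:twistreduction} to the clasp with twist parameter $\omega$ replaces the $\omega$-twisted pair of $n$-strands by $\sum_{j=0}^{n}\frac{\langle 2j\rangle}{\langle n,n,2j\rangle}\,q^{-\omega j(j+1)}$ times a diagram carrying a single $2j$-colored edge; only the even colors $2j$ occur because fusing two strands of equal color $n$ via formula~(\ref{pall}) forces $|n-n|=0\le u\le 2n$ with $u\equiv 0 \pmod 2$, so $u=2j$ with $0\le j\le n$. A second application of Lemma~\ref{lem:twistreduction}, now with twist parameter $\tau$, reduces the framing region to $\sum_{k=0}^{n}\frac{\langle 2k\rangle}{\langle n,n,2k\rangle}\,q^{-\tau k(k+1)}$ times a diagram carrying a $2k$-colored edge.

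After these two reductions the graph is precisely the configuration on the left-hand side of Lemma~\ref{lem:collapse}: a $2j$-colored edge and a $2k$-colored edge joined to two pairs of $n$-colored strands in the tetrahedral pattern. Collapsing this by Lemma~\ref{lem:collapse} yields the factor $\left\langle\begin{smallmatrix} n & n & 2j \\ n & n & 2k\end{smallmatrix}\right\rangle / \langle 2k\rangle$ and leaves a single $2k$-colored strand following $K$. Reading Lemma~\ref{graphicalcoloredjones} in reverse (with $n$ replaced by $2k$), that $2k$-colored copy of $K$ evaluates to $\langle 2k\rangle\, J'_{K,2k}(q)$. Collecting factors, the companion contribution $\langle 2k\rangle$ cancels the denominator $\langle 2k\rangle$ produced by Lemma~\ref{lem:collapse}, so the surviving $k$-dependence in front of $J'_{K,2k}(q)$ is exactly $\langle 2k\rangle/\langle n,n,2k\rangle$; dividing the whole expression by $\langle n\rangle$ then gives the stated formula.

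The main obstacle I anticipate is the graphical bookkeeping rather than the algebra. One must draw the $n$-colored diagram of $W_\omega^\tau(K)$ so that the clasp and the framing genuinely appear as two clean, separated twist regions of the form required by Lemma~\ref{lem:twistreduction}, and one must check that the handedness conventions make the clasp contribute $q^{-\omega j(j+1)}$ and the framing contribute $q^{-\tau k(k+1)}$, consistent with the convention that $W_1^0(K)$ is the negative Whitehead double. One must also verify that, after both twist reductions, the planar graph sits in exactly the position demanded by Lemma~\ref{lem:collapse}, with the $2j$-edge between the two pairs of $n$-strands and the $2k$-edge continuing along $K$, so that the tetrahedral coefficient and the cancellation of $\langle 2k\rangle$ emerge precisely as written.
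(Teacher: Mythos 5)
Your proposal follows exactly the paper's argument: express $\langle n \rangle J'_{W_\omega^\tau(K),n}(q)$ via Lemma~\ref{graphicalcoloredjones}, reduce the two twist regions with Lemma~\ref{lem:twistreduction}, collapse with Lemma~\ref{lem:collapse}, and reabsorb the $2k$-colored companion as $\langle 2k\rangle J'_{K,2k}(q)$. The bookkeeping issues you flag (the curls needed to get a genuinely $0$-framed diagram and the handedness of the two twist regions) are precisely what the paper handles with its Figures~\ref{fig:correct_framing} and \ref{fig:Wwtwrithe0}, so the proof is correct as outlined.
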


\begin{proof} 

We will compute $J'_{W_{\omega}^{\tau}(K), n}(q)$ using the graphical calculus of Masbaum and Vogel \cite{MV,Mas} following the method of Tanaka \cite{Tan}. 
To this end 
we need a diagram of $W_{\omega}^{\tau}(K)$ whose blackboard framing is $0$. 
Note that since two strands in the $\omega$--twist region and the $\tau$--twist region run in opposite directions, 
to obtain a correct $0$--framing of $W_{\omega}^{\tau}(K)$ we need to add some curls indicated in Figure~\ref{fig:correct_framing}.  
In Figure~\ref{fig:Wwtwrithe0}, 
we use $D(K)$ to mean a double of $K$, 
i.e.\ two parallel copies of $K$ whose blackboard framing is $0$. 

\begin{figure}[!ht]
\includegraphics[width=0.63\linewidth]{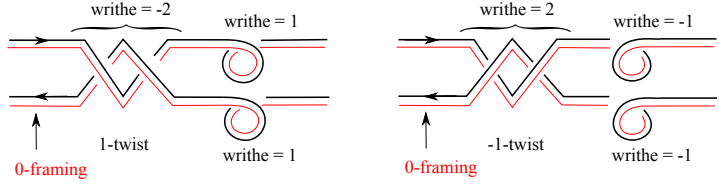}
\caption{Addition of curls to get a $0$--framing to twist regions}
\label{fig:correct_framing}
\end{figure}

\begin{figure}[!ht]
\includegraphics[width=0.28\linewidth]{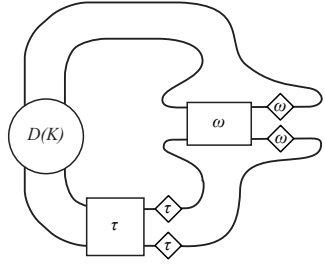}
\caption{A diagram of $W_{\omega}^{\tau}(K)$ with trivial writhe}
\label{fig:Wwtwrithe0}
\end{figure}

Using the above formulas, we compute $J'_{W_{\omega}^{\tau}(K), n}(q)$ graphically in the manner of \cite{Mas} and \cite{Tan}.  
As shown below, 
we begin by expressing $\langle n \rangle J'_{W_{\omega}^{\tau}(K), n}(q)$ diagrammatically with Lemma~\ref{graphicalcoloredjones}.  
Then we apply Lemma~\ref{lem:twistreduction} twice, 
once for each of the $\tau$ and $\omega$ twist regions.  
Next we apply Lemma~\ref{lem:collapse}.  
Finally, we again apply Lemma~\ref{graphicalcoloredjones} for the diagrammatic expression of $\langle 2k \rangle J'_{K,2k}(q)$.

\begin{align*}
\hskip -1truecm 
\langle n \rangle J'_{W_{\omega}^{\tau}(K), n}(q) 
&= \pcr{calculus1}{0.25}\\
&= \displaystyle
\sum_{j,k=0}^n 
\frac{\langle 2j \rangle}{\langle n,n,2j \rangle}
\frac{\langle 2k\rangle}{\langle n,n,2k\rangle}
q^{-\omega j (j+1)-\tau k (k+1)} \pcr{calculus2}{0.25}
&\\
&\\
& = \displaystyle \sum_{j,k=0}^n 
\frac{\langle 2j \rangle}{\langle n,n,2j \rangle}
\frac{\langle 2k\rangle}{\langle n,n,2k\rangle}
 q^{-\omega j (j+1)-\tau k (k+1)}\pcr{calculus3}{0.5} 
& \\
& \\
&= \displaystyle
\sum_{j,k=0}^n 
\frac{\langle 2j \rangle}{\langle n,n,2j \rangle}
\frac{\langle 2k\rangle}{\langle n,n,2k\rangle}
 q^{-\omega j (j+1)-\tau k (k+1)} 		
\frac {\left\langle \begin{array}{ccc} n & n & 2j \\ n & n & 2k \end{array}\right\rangle}{<2k>}  \pcr{calculus4}{0.5} 
 & \\
 & \\
 &= \displaystyle
\sum_{j,k=0}^n 
\frac{\langle 2j \rangle}{\langle n,n,2j \rangle}
\frac{\langle 2k\rangle}{\langle n,n,2k\rangle}
 q^{-\omega j (j+1)-\tau k (k+1)} 		
\left\langle \begin{array}{ccc} n & n & 2j \\ n & n & 2k \end{array}\right\rangle
 J'_{K, 2k}(q)
\end{align*}

Therefore we have: 
\begin{eqnarray*}
& & J'_{W_{\omega}^{\tau}(K), n}(q) \\
& & = \frac 1 {<n>}\sum_{j,k=0}^n  
\frac{\langle 2j \rangle}{\langle n,n,2j \rangle}
\frac{\langle 2k\rangle}{\langle n,n,2k\rangle}
\left\langle \begin{array}{ccc} n & n & 2j \\ n & n & 2k \end{array}\right\rangle q^{-\omega j(j+1) -\tau k(k+1)} J'_{K,2k}(q).
\end{eqnarray*} 

\end{proof}

\begin{remark}
Other formulas of the colored Jones polynomial  of the twisted Whitehead double of a knot $K$,
are given  by Tanaka \cite{Tan} and Zheng \cite{Zheng}.  
\end{remark}

\subsection{Computations of the maximum degrees}
\label{computation_degrees}

The maximum degree of a polynomial  $f(q)\in \Q [q^{\pm \frac{1}{4}}]$ is denoted $d_+[f(q)]$. 
For a rational function $f(q) = \frac{f_1(q)}{f_2(q)}$ with $f_1(q), f_2(q)\in \Q [q^{\pm \frac{1}{4}}]$ and $f_2(q)\ne 0$, 
we extend the maximum  degree of $f(q)$ as 
$d_+[f_1(q)]-d_+[f_2(q)]$. 

Propositions~\ref{maxdeg_J'_omega>0}, \ref{maxdeg_J'_omega<0} and \ref{maxdeg_J'_signcond} 
give the maximum degree of 
$J'_{K, n}(q) \in \mathbb{Z}[q^{\pm 1}]$ under various hypotheses.   
For convenience we recall the maximum degree of the functions which appear in the expression 
of $J'_{W_{\omega}^{\tau}(K), n}(q)$ in Proposition~\ref{prop:equationsplitting}. 

By definition \ref{def:<n>} we have: 

\begin{lemma}
\label{<n>}
$d_{+}[\langle n \rangle] = \dfrac{1}{2}n$.
\end{lemma}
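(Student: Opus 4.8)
The plan is to unwind the definition in Equation~(\ref{def:<n>}). Since $\langle n \rangle = (-1)^n [n+1]$ and multiplying by the unit $(-1)^n$ does not alter the maximum degree, it suffices to compute $d_+[[n+1]]$. First I would write the quantum integer as the quotient
\[
[n+1] = \frac{q^{(n+1)/2} - q^{-(n+1)/2}}{q^{1/2}-q^{-1/2}}
\]
and then invoke the extended definition of $d_+$ for rational functions recalled at the start of Subsection~\ref{computation_degrees}, namely $d_+[f_1/f_2] = d_+[f_1] - d_+[f_2]$. The numerator has maximum degree $(n+1)/2$, coming from the term $q^{(n+1)/2}$, while the denominator has maximum degree $1/2$, coming from $q^{1/2}$; subtracting gives $\tfrac{n+1}{2} - \tfrac{1}{2} = \tfrac{n}{2}$, which is the asserted value.

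Alternatively, and perhaps more transparently, I could carry out the division and express $[n+1]$ as the genuine Laurent polynomial $q^{n/2} + q^{(n-2)/2} + \cdots + q^{-n/2}$, whose leading term is visibly $q^{n/2}$; this route sidesteps any appeal to the rational-function convention. Either way the computation is immediate, so there is no real obstacle here. The only point requiring a moment's care is to confirm that the leading terms of numerator and denominator do not cancel, which is clear since each is a single monomial of positive degree. I would then record the identity in the displayed one-line form above, since it is used repeatedly in the degree estimates that follow.
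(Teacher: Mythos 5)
Your proof is correct and matches the paper, which simply states that the lemma follows from the definition in Equation~(\ref{def:<n>}); your explicit computation $d_+[[n+1]] = \tfrac{n+1}{2} - \tfrac{1}{2} = \tfrac{n}{2}$ (or equivalently reading off the leading term $q^{n/2}$ of the Laurent polynomial $[n+1]$) is exactly the intended one-line verification.
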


\begin{lemma}[\cite{GV}] 
\label{deglem1}
The maximum degree of $\langle s,t,u \rangle$ are  
 given by 
\[d_{+}[\langle s,t,u \rangle]= \frac {s+t+u} 4.\]
\end{lemma}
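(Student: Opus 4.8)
The plan is to prove Lemma~\ref{deglem1} directly from the defining formula for $\langle s,t,u \rangle$ in Equation~(2.2), reducing everything to the maximum degree of the quantum factorial $[m]!$. First I would record the elementary fact that $d_+[[m]] = \frac{m-1}{2}$ for $m \ge 1$ (since $[m] = q^{(m-1)/2} + \dots + q^{-(m-1)/2}$ as a balanced Laurent polynomial in $q^{1/2}$), and hence
\[
d_+[[m]!] = \sum_{t=1}^m d_+[[t]] = \sum_{t=1}^m \frac{t-1}{2} = \frac{m(m-1)}{4}.
\]
Since $\langle s,t,u \rangle$ is, up to the sign $(-1)^{i+j+k}$ which does not affect the degree, a ratio of products of quantum factorials, its maximum degree is the sum of the degrees of the factorials in the numerator minus those in the denominator.

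Next I would substitute the definitions $i = \frac{t+u-s}{2}$, $j = \frac{u+s-t}{2}$, $k = \frac{s+t-u}{2}$ into the formula. The numerator is $[i+j+k+1]![i]![j]![k]!$ and the denominator is $[s]![t]![u]!$, so by the factorial-degree computation above,
\[
d_+[\langle s,t,u \rangle] = \frac{1}{4}\Big[ m_0(m_0-1) + i(i-1) + j(j-1) + k(k-1) - s(s-1) - t(t-1) - u(u-1) \Big],
\]
where $m_0 = i+j+k+1$. Observing that $i+j+k = \frac{s+t+u}{2}$, so $m_0 = \frac{s+t+u}{2}+1$, the remaining step is purely algebraic: expand each $x(x-1) = x^2 - x$ and verify that the quadratic and linear contributions collapse to $\frac{s+t+u}{4}$. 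I would not grind through this in the final writeup but would confirm the cancellation by a short symmetric-function argument, noting that the linear terms $-(i+j+k) + (s+t+u) = \frac{s+t+u}{2}$ and the quadratic terms $m_0^2 + i^2 + j^2 + k^2 - s^2 - t^2 - u^2$ combine (using $2i = t+u-s$, etc.) to give exactly $\frac{s+t+u}{2}$, so that the bracketed total is $\frac{s+t+u}{1}$ and dividing by $4$ yields the claim.

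The one genuine subtlety, and the step I expect to be the main obstacle, is not the arithmetic but justifying that there is no cancellation of the top-degree terms when passing from the factorials to their quotient; that is, that the maximum degree of the ratio really equals the difference of maximum degrees. This is automatic for the degree of a single Laurent polynomial over another only because $[m]!$ is a product of balanced symmetric factors $[t]$, each of which has a nonzero top coefficient $+1$, so the leading coefficients are genuinely nonzero and multiply to a nonzero leading coefficient in both numerator and denominator. Thus no degeneration occurs, and the extended degree convention $d_+[f_1/f_2] = d_+[f_1] - d_+[f_2]$ from Subsection~\ref{computation_degrees} applies cleanly. With that observation in place the lemma follows, and one may alternatively cite \cite{GV} for the same computation.
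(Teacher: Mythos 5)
Your proposal is correct, and it is necessarily a different route from the paper's, because the paper offers no proof at all: Lemma~\ref{deglem1} is simply quoted from \cite{GV}. Your direct verification from the definition is sound. The key facts you use are right: $d_+[[m]]=\frac{m-1}{2}$, hence $d_+[[m]!]=\frac{m(m-1)}{4}$, and with $\sigma:=i+j+k=\frac{s+t+u}{2}$ one checks $4(i^2+j^2+k^2)=3(s^2+t^2+u^2)-2(st+tu+us)$ and $4\sigma^2=(s^2+t^2+u^2)+2(st+tu+us)$, so that $\sigma^2+i^2+j^2+k^2-(s^2+t^2+u^2)=0$ and the bracketed total collapses to $2\sigma=s+t+u$, giving $d_+[\langle s,t,u\rangle]=\frac{s+t+u}{4}$. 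One small imprecision: your stated grouping (``linear terms give $\frac{s+t+u}{2}$, quadratic terms give $\frac{s+t+u}{2}$'') does not quite hold term-by-term as written, since the $-m_0$ from $m_0(m_0-1)$ and the $+1$ from $m_0^2$ land in opposite groups; but these contributions cancel against each other and the total of $s+t+u$ is correct. Your remark on non-cancellation of leading coefficients is also well taken: each $[t]$ has leading coefficient $1$, so the extended degree convention $d_+[f_1/f_2]=d_+[f_1]-d_+[f_2]$ agrees with the honest degree of the Laurent polynomial $\langle s,t,u\rangle$. (Implicitly one should also note that the formula is asserted only for admissible triples, i.e.\ when $i,j,k$ are non-negative integers.) What your approach buys is a self-contained two-line verification in place of an external citation; what the citation buys is brevity and consistency with Lemma~\ref{deglem2}, whose analogous degree formula is genuinely harder and is also taken from \cite{GV}.
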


\begin{lemma}[\cite{GV}] 
\label{deglem2} 
The maximum degree of 
$\dis{
\left\langle \begin{array}{ccc} A & B & E \\ D & C & F \end{array}\right\rangle
}$ are given by 
\begin{eqnarray*}
&& d_{+} \left[\left\langle \begin{array}{ccc} A & B & E \\ D & C & F \end{array}\right\rangle \right]\\
&&= \frac 1 2[-\Sigma^2-\frac 1 2 (A^2+B^2+C^2+D^2+E^2+F^2-\Sigma )
+\frac 3 2 \sum_{i=1}^3 b_i(b_i-1) +\sum_{j=1}^4 a_j(a_j+1)\\
&& \quad -3M^2+M(1+2\Sigma)], 
\end{eqnarray*}
where $\Sigma, a_j, b_i$ are as in (\ref{tetsymb}) and  $M=\min b_i$. 
\end{lemma}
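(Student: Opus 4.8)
The plan is to read the degree off directly from the explicit state-sum formula (\ref{tetsymb}), handling the prefactor and the internal sum separately. The only analytic input needed is that $d_{+}[[m]] = \frac{m-1}{2}$, and hence $d_{+}[[m]!] = \frac{m(m-1)}{4}$; together with the convention that the degree of a quotient is the difference of degrees, this assigns a degree to every quantum factorial occurring in (\ref{tetsymb}). First I would compute the degree of the prefactor $\frac{\prod_{i,j}[b_i-a_j]!}{[A]![B]![C]![D]![E]![F]!}$, which is simply $\sum_{i,j}\frac{(b_i-a_j)(b_i-a_j-1)}{4} - \sum_{X}\frac{X(X-1)}{4}$, where $X$ ranges over $A,B,C,D,E,F$.

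Next I would treat the internal sum $\sum_s \frac{(-1)^s[s+1]!}{\prod_i[b_i-s]!\prod_j[s-a_j]!}$. The degree of the $s$-th summand is the quadratic function $d(s) = \frac{1}{4}\bigl[s(s+1) - \sum_i(b_i-s)(b_i-s-1) - \sum_j(s-a_j)(s-a_j-1)\bigr]$, and expanding gives $4d(s) = -6s^2 + \bigl(2 + 2\sum_i b_i + 2\sum_j a_j\bigr)s + c$ for a constant $c$, a strictly concave parabola in $s$ (the $s^2$ coefficient being $1-3-4=-6$).

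The heart of the argument, and the step I expect to be the main obstacle, is to locate the maximum of this parabola over the summation range $\max_j a_j \le s \le \min_i b_i$ and to rule out cancellation of the top-degree contributions. Here the identities $\sum_i b_i = \sum_j a_j = \Sigma$, which are immediate from the definitions of the $a_j$ and $b_i$ in (\ref{tetsymb}), are decisive: they place the vertex at $s^{*} = \frac{1+2\Sigma}{6}$. Since $M = \min_i b_i \le \frac{1}{3}\sum_i b_i = \frac{\Sigma}{3} < s^{*}$, the function $d$ is strictly increasing at every integer $s \le M$ in the range, so its maximum is attained \emph{uniquely} at the upper endpoint $s = M$. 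Because the maximum is strict, the top-degree monomial of the $s=M$ summand (whose leading coefficient is $\pm 1$, each $[m]$ being monic in its top term) is not matched by any lower summand and cannot cancel; hence the degree of the sum is exactly $d(M)$.

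Finally I would add $d(M)$ to the degree of the prefactor and rewrite in the stated form, using $\sum_i b_i = \sum_j a_j = \Sigma$ and $\sum_{X} X = \Sigma$ to regroup the contributions into the $\Sigma^2$, $\sum_{X}X^2$, $\sum_i b_i(b_i-1)$, $\sum_j a_j(a_j+1)$, $M^2$, and $\Sigma M$ terms of the displayed expression. This last step is a routine algebraic rearrangement; one checks, for instance, that the $M$-dependent part $\frac{1}{4}\bigl(-6M^2 + (2+4\Sigma)M\bigr)$ matches $\frac{1}{2}\bigl(-3M^2 + M(1+2\Sigma)\bigr)$, and similarly that the $\Sigma^2$ and edge-label contributions line up.
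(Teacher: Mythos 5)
Your argument is correct. Note that the paper itself gives no proof of this lemma; it is quoted from the cited reference \cite{GV}, and your derivation is essentially the one found there: from $d_+[[m]!]=\tfrac{m(m-1)}{4}$ the degree of the $s$-th summand is a concave quadratic in $s$ with leading coefficient $\tfrac{1}{4}(1-3-4)=-\tfrac{3}{2}$, and the identities $\sum_i b_i=\sum_j a_j=\Sigma$ place its vertex at $s^\ast=\tfrac{1+2\Sigma}{6}>\tfrac{\Sigma}{3}\ge M$, so the maximum over $\max_j a_j\le s\le M$ is attained uniquely at $s=M$; uniqueness plus the fact that each summand has leading coefficient $(-1)^s$ rules out cancellation. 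I checked the final bookkeeping: the $M$-dependent part $\tfrac14\bigl(-6M^2+(2+4\Sigma)M\bigr)=\tfrac12\bigl(-3M^2+M(1+2\Sigma)\bigr)$ matches, and using $\sum_{i,j}(b_i-a_j)^2=4\sum_i b_i^2+3\sum_j a_j^2-2\Sigma^2$ and $\sum_{i,j}(b_i-a_j)=\Sigma$ the $M$-independent part reduces on both sides to $\tfrac14\bigl(3\sum_i b_i^2+2\sum_j a_j^2-2\Sigma^2-\sum_X X^2\bigr)$, so the stated closed form is exactly what your computation yields.
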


To ease our computations, 
we define
\begin{equation}
\label{fsummand}
f(j,k;q)  =  
\frac{\langle 2j \rangle}{\langle n,n,2j \rangle}
\frac{\langle 2k\rangle}{\langle n,n,2k\rangle}
\left\langle \begin{array}{ccc} n & n & 2j \\ n & n & 2k \end{array}\right\rangle 
q^{-\omega j(j+1) -\tau k(k+1)} J'_{K,2k}(q).
\end{equation}
Then by 
Proposition~\ref{prop:equationsplitting} we have 
\begin{equation}
\label{normalizedsum}
\langle n \rangle J'_{W_{\omega}^{\tau}(K), n}(q) =\sum_{j,k=0}^n f(j,k;q)
\end{equation}
so that 
\begin{equation}
\label{normalizeddegreesum}
d_{+}[J'_{W_{\omega}^{\tau}(K), n}(q)] 
= d_{+}[\sum_{j,k=0}^n f(j,k;q)] - d_{+}[\langle n \rangle] 
= d_{+}[\sum_{j,k=0}^n f(j,k;q)] - \frac{n}{2}.
\end{equation}
Hence our computations of $d_{+}[J'_{W_{\omega}^{\tau}(K), n}(q)]$ reduce to an understanding the extrema of $d_{+}[f(j,k;q)]$ for $0\leq j,k \leq n$.
 
To that end we first record the following computations of degrees.  
From Lemma \ref{deglem1}, 
we have that 
\begin{equation}
\label{deg2j/nn2j}
d_{+}\left[ \frac {\langle 2j \rangle}{\langle n,n,2j \rangle} \right]
=-\frac{n}{2}+\frac{j}{2}.
\end{equation}
From  Lemma \ref{deglem2}, we have that 
\begin{equation}
\label{deg6j}
d_{+}\left[ \left\langle \begin{array}{ccc} n & n & 2j \\ n & n & 2k \end{array}\right\rangle \right]
= \left\{\begin{array}{ll}
\frac 1 2 (j+k+n) & (j+k\le n), \\
\frac 1 2 (-j^2-2jk-k^2+2n+2jn+2kn-n^2) & (j+k\ge n).
\end{array}
\right.
\end{equation}

\begin{proposition}[$J'$, $d_+ = \delta$, $\omega>0$]
\label{maxdeg_J'_omega>0} 
Let $K$ be a knot in $S^3$ and $d_+[J'_{K,n}(q)]$ is the quadratic quasi-polynomial 
$\delta'_K(n)=\alpha(n) n^2 +\beta(n) n+\gamma(n)$ for all $n \ge 0$.
We put $\alpha_0:=\alpha (0)$, $\beta_0:=\beta (0)$, and $\gamma_0:=\gamma (0)$. 
We assume that the period of $d_+[J'_{K,n}(q)]=\delta'_K(n)$ is less than or equal to $2$ and that 
$-2\alpha_0+\beta_0+\frac 1 2 \le 0$. 
Assume further that if $-2\alpha_0+\beta_0+\frac{1}{2} = 0$, 
then $\tau \ne 4 \alpha_0$. 
Then, for suitably large $n$, 
the maximum degree of the normalized colored Jones polynomial 
of its $\tau$--twisted  $\omega$--generalized  Whitehead double with $\omega>0$ is given by 
\begin{equation}
\label{tgWmax_omega>0}
\delta'_{W_{\omega}^{\tau}(K)}(n)=
\begin{cases} 
(4\alpha_0- \tau) n^2 +(2\beta_0-\tau) n+\gamma_0 & (\alpha_0> \frac{\tau}{4}) \\
             -n +\gamma_0  &  (\alpha_0 < \frac{\tau}{4}) \\
             -n+ \gamma_0& (\alpha_0 = \frac{\tau}{4}, -2\alpha_0+\beta_0+\frac{1}{2} \ne 0).
\end{cases} 
\end{equation}
In fact, if either $\alpha_0 < \frac{\tau}{4}$ or $\alpha_0 = \frac{\tau}{4}$ with $-2\alpha_0+\beta_0+\frac{1}{2} \ne 0$, 
then $d_+[J'_{W_{\omega}^{\tau}(K),n}(q)] =  \delta'_{W_{\omega}^{\tau}(K)}(n)$ for all $n \geq 0$.
\end{proposition}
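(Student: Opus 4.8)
\section*{Proof proposal for Proposition~\ref{maxdeg_J'_omega>0}}

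The plan is to work from the explicit sum in Proposition~\ref{prop:equationsplitting}. By (\ref{normalizedsum}) and (\ref{normalizeddegreesum}),
\[
d_+[J'_{W_{\omega}^{\tau}(K),n}(q)] = d_+\Big[\sum_{j,k=0}^n f(j,k;q)\Big] - \frac{n}{2},
\]
so everything reduces to understanding the function $(j,k)\mapsto d_+[f(j,k;q)]$ on the lattice square $\{0\le j,k\le n\}$. First I would compute $d_+[f(j,k;q)]$ in closed form from Lemmas~\ref{<n>}, \ref{deglem1}, \ref{deglem2}, the degree identities (\ref{deg2j/nn2j}) and (\ref{deg6j}), and the monomial degree $-\omega j(j+1)-\tau k(k+1)$. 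The one input that uses the hypotheses is $d_+[J'_{K,2k}(q)]$: since $\delta'_K$ has period at most $2$ and the argument $2k$ is always even, this equals $4\alpha_0 k^2 + 2\beta_0 k + \gamma_0$ for every $k$. Feeding this in and splitting according to the two cases of (\ref{deg6j}) gives $d_+[f(j,k;q)]$ as an explicit quadratic in $(j,k)$ on each of the regions $j+k\le n$ and $j+k\ge n$, the two expressions agreeing on the diagonal $j+k=n$.

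Next I would maximize this piecewise-quadratic function over the square, reducing the two-variable problem to two one-variable ones. For each fixed $k$ the coefficient of $j^2$ is negative on both pieces (namely $-\omega$ and $-(\tfrac12+\omega)$, both negative since $\omega>0$), so $j\mapsto d_+[f(j,k;q)]$ is concave; as $\omega\ge1$, its maximum over $0\le j\le n$ is attained at $j=0$, and one verifies that the piece $j+k\ge n$ never exceeds the $j=0$ slice. It then remains to maximize the $j=0$ slice $(4\alpha_0-\tau)k^2 + (2\beta_0-\tau+1)k$ (plus the $k$-independent constant $\gamma_0-\tfrac{n}{2}$) over $0\le k\le n$. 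Writing the linear coefficient as $2\beta_0-\tau+1 = 2(-2\alpha_0+\beta_0+\tfrac12)+(4\alpha_0-\tau)$ exposes the trichotomy of the statement: when $\alpha_0>\tfrac{\tau}{4}$ the leading coefficient $4\alpha_0-\tau$ is positive and the maximizer is $k=n$, so the maximum over the square sits at $(0,n)$ and yields the first formula; when $\alpha_0<\tfrac{\tau}{4}$, or $\alpha_0=\tfrac{\tau}{4}$ with $-2\alpha_0+\beta_0+\tfrac12\ne0$, the hypothesis $-2\alpha_0+\beta_0+\tfrac12\le0$ forces this linear coefficient to be strictly negative, so the maximizer is $k=0$, i.e.\ $(0,0)$, and yields $-n+\gamma_0$. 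In each case subtracting $\tfrac{n}{2}$ produces the claimed $\delta'_{W_{\omega}^{\tau}(K)}(n)$, and the exclusion of $\tau=4\alpha_0$ when $-2\alpha_0+\beta_0+\tfrac12=0$ is exactly what rules out the degenerate situation where the $k$-slice is constant and no maximizer is singled out.

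The step I expect to be the main obstacle is upgrading $\max_{j,k}d_+[f(j,k;q)]$ to $d_+[\sum_{j,k}f(j,k;q)]$, which requires that the top-degree contributions of the summands not cancel. Since $d_+$ is minus the valuation at $q=\infty$, a summand whose degree strictly exceeds that of every other summand contributes an uncancelled leading term, so it suffices to show the maximizer of $d_+[f(j,k;q)]$ is \emph{unique} on the lattice square. This uniqueness is precisely where the difference between ``for suitably large $n$'' and ``for all $n\ge0$'' enters. In the cases $\alpha_0<\tfrac{\tau}{4}$ and $(\alpha_0=\tfrac{\tau}{4},\,-2\alpha_0+\beta_0+\tfrac12\ne0)$ the maximizer is the corner $(0,0)$, and comparing it directly with its lattice neighbours shows it is the strict maximum for every $n\ge0$, giving $d_+[J'_{W_{\omega}^{\tau}(K),n}(q)]=\delta'_{W_{\omega}^{\tau}(K)}(n)$ for all $n$. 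In the remaining case $\alpha_0>\tfrac{\tau}{4}$ the maximizer $(0,n)$ becomes strictly dominant only once the quadratic gain toward $(0,n)$ overtakes the lower-order competition from the piece $j+k\ge n$, which happens for all suitably large $n$. Carrying out the neighbour comparisons systematically—across the interior critical points, the square's edges, and the diagonal $j+k=n$—together with the $j$-concavity above, is the technical crux; the remaining work is routine degree arithmetic.
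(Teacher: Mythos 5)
Your proposal is correct and follows essentially the same route as the paper's proof: reduce via Proposition~\ref{prop:equationsplitting} and (\ref{normalizeddegreesum}) to maximizing $d_+[f(j,k;q)]$, use the period-$\leq 2$ hypothesis to evaluate $d_+[J'_{K,2k}(q)]$, fold the region $j+k\ge n$ into $j+k\le n$, locate the maximum of the $j=0$ slice at $k=n$ or $k=0$ according to the sign of $4\alpha_0-\tau$, and invoke uniqueness of the maximizer to rule out cancellation of leading terms. Your observation about which cases hold for all $n\ge 0$ versus only suitably large $n$ also matches the paper's conclusion.
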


\begin{proof}
In light of \eqref{normalizedsum}, 
to determine the maximum degree of $J'_{W_{\omega}^{\tau}(K), n}(q)$, 
we need to understand  the maximum degrees of the functions $f(j,k;q)$ for $0\leq j,k \leq n$. 

Since the period of $d_+[J'_{K,n}(q)]=\delta'_K(n)$ is less than or equal to $2$, 
it follows that 
$\alpha(2k)=\alpha_0$, $\beta(2k)=\beta_0$, and $\gamma (2k)=\gamma_0$ and so 
\begin{equation}
\label{pdegK_omega>0}
d_+[J'_{K,2k}(q)]=\alpha(2k)(2k)^2+\beta(2k)2k+\gamma(2k)
=4\alpha_0 k^2+2\beta_0 k+\gamma_0
\end{equation}
for all $k\geq 0$.

Due to (\ref{deg6j}) the argument splits into two cases.

\medskip

\noindent 
\textbf{Case $1$.\ $j+k \ge n$.}

Applying the equalities (\ref{deg2j/nn2j}) and (\ref{deg6j}) to (\ref{fsummand}), 
we see that 
\begin{align*}
d_+[f(j,k;q)]
&= (-\frac n 2+\frac j 2) + (-\frac n 2 + \frac k 2) + \frac 1 2 (-j^2-2jk-k^2+2n+2jn+2kn-n^2) \\
& \qquad\qquad\qquad\qquad -\omega j(j+1) -\tau k(k+1) + d_+[J'_{K,2k}(q)]\\
&= -\frac{2 \omega+1}{2} j^2+(n-k- \omega+\frac 1 2)j \\
&\qquad\qquad\qquad\qquad +(- \tau -\frac 1 2) k^2+( \frac 1 2+n- \tau) k-\frac {n^2} 2+d_+[J'_{K, 2k}(q)] \\
&= 
-\left(\frac{2 \omega+1}{2} \right)\left(j-\frac{n-k- \omega+\frac{1}{2}}{2 \omega +1}\right)^2 + \frac{(n-k- \omega+\frac{1}{2})^2}{2(2\omega+1)}\\
& \qquad\qquad\qquad\qquad
+(- \tau -\frac 1 2) k^2+( \frac 1 2+n- \tau) k-\frac {n^2} 2+d_+[J'_{K, 2k}(q)]. 
\end{align*}
Since $\frac{n-k- \omega+\frac{1}{2}}{2 \omega +1}<n-k$ and $j\geq n-k$, this is maximized at $j=n-k$ for a fixed $k$. 
Therefore this case is included in the next case. 

\medskip

\noindent 
\textbf{Case $2$.\ $j+k \le n$.}

Applying the equalities (\ref{deg2j/nn2j}) and (\ref{deg6j}) to (\ref{fsummand}), 
we see that 
\begin{align*}
d_+[f(j,k;q)]
&= (-\frac n 2+\frac j 2) + (-\frac n 2 + \frac k 2) + \frac 1 2 (j+k+n) \\
& \qquad\qquad\qquad -\omega j(j+1) -\tau k(k+1) + d_+[J'_{K,2k}(q)]\\
&= - \omega j^2-( \omega-1) j -\tau k^2 - (\tau-1) k -\frac{n}{2}+d_+[J'_{K,2k}(q)]\\
& = -\omega\left(j + \frac{\omega - 1}{2\omega}\right)^2 + \omega\left(\frac{\omega -1}{2\omega}\right)^2 - \tau k^2 - (\tau -1)k - \frac{n}{2}+d_+[J'_{K,2k}(q)].
\end{align*}
Since $\omega > 0$ so that $\frac{\omega-1}{2\omega} \ge 0 $, 
this is maximized uniquely at $j=0$ for a fixed $k$. Thus, 
\begin{align*}
\max_{0\le j \le n-k}d_+[f(j,k;q)] 
& = d_+[f(0,k;q)] \\
&=  -\tau k^2-(\tau-1) k -\frac{n}{2} + d_+[J'_{K,2k}(q)] \\
& =  -\tau k^2-(\tau-1) k -\frac{n}{2} + 4\alpha_0 k^2 + 2\beta_0 k + \gamma_0 \\
& = (4\alpha_0- \tau)k^2 + (2\beta_0 - \tau +1)k-\frac{n}{2} + \gamma_0
\end{align*}
since we are assuming $d_+[J'_{K,2k}(q)] = \delta'_K(2k)$. 

\medskip

We now consider the three cases of when $\alpha_0 > \frac{\tau}{4}$, $\alpha_0 < \frac{\tau}{4}$, or $\alpha_0 = \frac{\tau}{4}$.
First note that when $\alpha_0 \ne \frac{\tau}{4}$, 
we may write
\begin{equation}
\label{eqn:d+bigk_omega>0}
d_+[f(0,k;q)] =(4\alpha_0- \tau) \left(k+\frac {2\beta_0- \tau +1}{2(4\alpha_0-\tau)} \right)^2
- \frac {(2\beta_0- \tau +1 )^2}{4(4\alpha_0- \tau)}-\frac n 2+\gamma_0
\end{equation}
for $n\geq k\geq 0$. 
 Therefore, for $k >  -\frac {2\beta_0- \tau +1}{2(4\alpha_0-\tau)}$, 
 we see that $d_+[f(0,k;q)]$ is monotonically increasing or decreasing depending on whether $4\alpha_0- \tau$ is positive or negative respectively.

\smallskip

\noindent 
\textbf{Case $2.1$.\ $\alpha_0 > \frac{\tau}{4}$.}

Note that $-\frac {2\beta_0-\tau+1}{2(4\alpha_0- \tau)} < \frac n 2 $ 
for sufficiently large $n\geq 0$. 
Hence equation (\ref{eqn:d+bigk_omega>0}) shows that $d_+[f(0,k;q)]$ is monotonically increasing with respect to $k$ for $k  > -\frac {2\beta_0-\tau+1}{2(4\alpha_0- \tau)} $ when $n$ is sufficiently large.  
Therefore, for sufficiently large $n$, $\displaystyle \max_{0 \le k \le n} d_+[f(0,k;q)]$ is uniquely realized at $k = n$. 
Hence 
\begin{align}
\label{eqn:a0big_omega>0}
d_+[ \sum_{j,k=0}^n f(j,k;q)]
&=d_+[ \sum_{k=0}^n f(0,k;q)]\\
&=d_+[f(0,n;q)] \nonumber \\
&=(4\alpha_0- \tau) n^2+(2\beta_0- \tau + \frac 1 2)n+\gamma_0 \nonumber
\end{align}
for sufficiently large $n$. 

\smallskip

\noindent 
\textbf{Case $2.2$.\ $\alpha_0 < \frac{\tau}{4}$.}

Since 
$0 \ge -2\alpha_0+\beta_0+\frac{1}{2}
=-2(\alpha_0-\frac{\tau}{4})+\beta_0-\frac{\tau}{2}+\frac{1}{2}$ by assumption,  
then we have $2\beta_0 - \tau + 1 < 0$. 
Therefore, 
if $\alpha_0 <\frac{\tau}{4}$, 
so that $\frac {2\beta_0- \tau+1}{2(4\alpha_0- \tau)} > 0$, 
(\ref{eqn:d+bigk_omega>0}) shows that $d_+[f(0,k;q)]$ is monotonically decreasing with respect to $k$ for $k  \ge 0$.  
Thus $ \displaystyle \max_{0 \le k \le n} d_+[f(0,k;q)]$ is uniquely realized at $k = 0$. 
Hence 
\begin{align}
\label{eqn:a0small_omega>0}
d_+[ \sum_{j,k=0}^n f(j,k;q)]
&=d_+[ \sum_{k=0}^n f(0,k;q)]\\
&= d_+[f(0,0;q)] \nonumber \\
&= -\frac{n}{2} + \gamma_0. \nonumber
\end{align}

\smallskip

\noindent 
\textbf{Case $2.3$.\ $\alpha_0 = \frac{\tau}{4}$.}

By the assumption that $\alpha_0 = \frac{\tau}{4}$, 
our hypotheses of this proposition imply that $-2\alpha_0 + \beta_0 +\frac{1}{2} < 0$.    
Since $\beta_0- \frac{\tau}{2} + \frac{1}{2} 
= -2(\alpha_0 - \frac{\tau}{4}) + \beta_0 - \frac{\tau}{2}+ \frac{1}{2}
= -2 \alpha_0 + \beta_0 + \frac{1}{2}$, we now have that $2\beta_0 - \tau + 1 < 0$. 
Therefore 
\[
d_+[f(0,k;q)] = (2\beta_0 - \tau +1)k-\frac{n}{2} + \gamma_0
\]
is monotonically decreasing with respect to $k$ for $0 \le k \le n$. 
Thus $\displaystyle \max_{0 \le k \le n}d_+[f(0,k;q)] $ is uniquely realized at $k=0$. 
Hence 
\begin{align}
\label{eqn:a0small2_omega>0}
d_+[ \sum_{j,k=0}^n f(j,k;q)]
&=d_+[ \sum_{k=0}^n f(0,k;q)]\\
&= d_+[f(0,0;q)] \nonumber \\
&= -\frac{n}{2} + \gamma_0. \nonumber
\end{align}

\medskip

Finally we determine the maximum degree of 
$J'_{W_{\omega}^{\tau}(K), n}(q)$ via
\[d_+[J'_{W_{\omega}^{\tau}(K), n}(q)] = d_+[\sum_{j, k=0}^n f(j,k;q)] -  \frac{n}{2}\]
using equation (\ref{normalizeddegreesum}).
By equations (\ref{eqn:a0big_omega>0}), (\ref{eqn:a0small_omega>0}), and (\ref{eqn:a0small2_omega>0}) we have 
\[
\delta'_{W_{\omega}^{\tau}(K)}(n) = 
\begin{cases}
(4\alpha_0- \tau) n^2+(2\beta_0- \tau) n + \gamma_0 &(\alpha_0 > \frac{\tau}{4}) \\
-n + \gamma_0 & (\alpha_0 < \frac{\tau}{4})\\
-n + \gamma_0 & (\alpha_0 = \frac{\tau}{4}, -2\alpha_0 + \beta_0 +\frac{1}{2} \neq 0).
\end{cases}
\]
Notice that only equation (\ref{eqn:a0big_omega>0}) requires that $n$ be suitably large.  Equations (\ref{eqn:a0small_omega>0}) and (\ref{eqn:a0small2_omega>0}) hold for all $n \geq 0$.
\end{proof}

\begin{proposition}[$J'$, $d_+ = \delta$, $\omega <0$]
\label{maxdeg_J'_omega<0} 
Let $K$ be a knot in $S^3$ and $d_+[J'_{K,n}(q)]$ is the quadratic quasi-polynomial 
$\delta'_K(n)=\alpha(n) n^2 +\beta(n) n+\gamma(n)$ for all $n \ge 0$.
We put $\alpha_0:=\alpha (0)$, $\beta_0:=\beta (0)$, and $\gamma_0:=\gamma (0)$. 
We assume that the period of $d_+[J'_{K,n}(q)]=\delta'_K(n)$ is less than or equal to $2$ and that 
$-2\alpha_0+\beta_0+\frac 1 2 \le 0$. 
Assume further that if $-2\alpha_0+\beta_0+\frac{1}{2} = 0$, 
then $\tau \ne 4 \alpha_0$. 
Then, for suitably large $n$, 
the maximum degree of the normalized colored Jones polynomial 
of its $\tau$--twisted  $\omega$--generalized Whitehead double with $\omega<0$ is given by 
\begin{equation}
\label{tgWmax_omega<0}
\delta'_{W_{\omega}^{\tau}(K)}(n)=
\begin{cases}
(4\alpha_0- \tau-\omega-\frac{1}{2}) n^2+(2\beta_0- \tau -\omega + \frac{1}{2})n+\gamma_0 &(\alpha_0 > \frac{\tau}{4}+\frac{1}{8}) \\
- \omega n^2 - \omega n  + \gamma_0 & (\alpha_0 < \frac{\tau}{4}+\frac{1}{8})\\
- \omega n^2 - 	\omega n  + \gamma_0 & (\alpha_0 = \frac{\tau}{4}+\frac{1}{8}, -2\alpha_0 + \beta_0 +\frac{1}{2} \neq 0).
\end{cases}
\end{equation}
In fact, if either $\alpha_0 < \frac{\tau}{4}+\frac{1}{8}$ or $\alpha_0 = \frac{\tau}{4}+\frac{1}{8}$ with $-2\alpha_0+\beta_0+\frac{1}{2} \ne 0$, 
then $d_+[J'_{W_{\omega}^{\tau}(K),n}(q)] =  \delta'_{W_{\omega}^{\tau}(K)}(n)$ for all $n \geq 0$.
\end{proposition}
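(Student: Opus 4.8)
The plan is to follow the template of the proof of Proposition~\ref{maxdeg_J'_omega>0}: analyze the maximum degree of the summands $f(j,k;q)$ from \eqref{fsummand} over $0 \le j,k \le n$ and then read off $\delta'_{W_{\omega}^{\tau}(K)}(n)$ via \eqref{normalizeddegreesum}. As there, the period hypothesis gives $d_+[J'_{K,2k}(q)] = 4\alpha_0 k^2 + 2\beta_0 k + \gamma_0$, and the formula \eqref{deg6j} for the tetrahedral coefficient splits the analysis into the regions $j+k \le n$ and $j+k \ge n$. Applying \eqref{deg2j/nn2j} and \eqref{deg6j} to \eqref{fsummand} yields in each region a quadratic in $j$ (with $k$ a parameter) whose leading coefficient is $-\tfrac{2\omega+1}{2}$ when $j+k \ge n$ and $-\omega$ when $j+k \le n$.

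The decisive difference from the case $\omega > 0$ is the sign of these coefficients. Since $\omega < 0$ is an integer, $\omega \le -1$, so both $-\tfrac{2\omega+1}{2}$ and $-\omega$ are positive, and in each region $d_+[f(j,k;q)]$ is an upward parabola in $j$. In the region $j+k \le n$ its vertex lies at $j = \tfrac{1-\omega}{2\omega} < 0$, so for fixed $k$ the maximum over $0 \le j \le n-k$ occurs at $j = n-k$; in the region $j+k \ge n$ convexity forces the maximum over $n-k \le j \le n$ to an endpoint $j = n-k$ or $j = n$. Thus the only candidates for fixed $k$ are $g_1(k) := d_+[f(n-k,k;q)]$ and $g_2(k) := d_+[f(n,k;q)]$, and a direct computation gives
\[ g_2(k) - g_1(k) = \left(\omega - \tfrac12\right)k^2 + \left(\tfrac12 - \omega(1+2n)\right)k = k\left[\left(\omega-\tfrac12\right)k + \tfrac12 - \omega(1+2n)\right]. \]
Because $\omega \le -1$, the bracketed factor is positive at $k=n$ and decreasing in $k$, hence positive on all of $0 \le k \le n$; therefore $g_2(k) \ge g_1(k)$ there for every $n \ge 0$, with equality only at $k=0$. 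The problem reduces to maximizing
\[ g_2(k) = \left(4\alpha_0 - \tau - \tfrac12\right)k^2 + \left(2\beta_0 - \tau + \tfrac12\right)k - \omega n^2 + \left(-\omega + \tfrac12\right)n + \gamma_0 \]
over $0 \le k \le n$.

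Next I would split into the three listed cases by the sign of the leading coefficient $4\alpha_0 - \tau - \tfrac12$, which is exactly the comparison of $\alpha_0$ with $\tfrac{\tau}{4}+\tfrac18$. When $\alpha_0 > \tfrac{\tau}{4}+\tfrac18$, the convex $g_2$ is maximized on $[0,n]$ at an endpoint, and for suitably large $n$ the $n^2$ term in $g_2(n)-g_2(0)$ dominates the linear term, so the maximum is uniquely at $k=n$; evaluating $g_2(n)$ and subtracting $\tfrac{n}{2}$ gives the first formula. When $\alpha_0 \le \tfrac{\tau}{4}+\tfrac18$, the hypothesis $-2\alpha_0+\beta_0+\tfrac12 \le 0$ rewrites as $2\beta_0 - \tau + \tfrac12 \le 4\alpha_0 - \tau - \tfrac12$; in the strict case this forces the linear coefficient $2\beta_0-\tau+\tfrac12 < 0$ alongside a negative leading coefficient, so the concave $g_2$ has its vertex at negative $k$ and is strictly decreasing on $[0,n]$, while in the boundary case $\alpha_0 = \tfrac{\tau}{4}+\tfrac18$ the leading coefficient vanishes and the linear coefficient equals $2(-2\alpha_0+\beta_0+\tfrac12)$, which is strictly negative precisely by the case hypothesis $-2\alpha_0+\beta_0+\tfrac12 \ne 0$. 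In both subcases $g_2$ is maximized uniquely at $k=0$ for every $n \ge 0$, and $g_2(0) - \tfrac{n}{2} = -\omega n^2 - \omega n + \gamma_0$ yields the remaining two formulas, valid for all $n \ge 0$.

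Finally I would record that in each case the global maximum of $d_+[f(j,k;q)]$ is attained at a single pair $(j,k)$---namely $(n,n)$ in the first case and $(n,0)$ in the other two---using the strict convexity in $j$, the strict monotonicity of $g_2$, and $g_2 > g_1$ for $k>0$. Since the corresponding summand has nonzero leading coefficient, there is no cancellation, so $d_+\big[\sum_{j,k} f(j,k;q)\big]$ equals this maximum, and \eqref{normalizeddegreesum} completes the identification. The main obstacle, and the genuine departure from the $\omega>0$ argument, is the second step: for $\omega<0$ the relevant maximum in $j$ migrates from the interior point $j=0$ to the boundary corners $j=n-k$ and $j=n$, so one must introduce the two edge functions $g_1,g_2$ and establish the domination $g_2 \ge g_1$; once that inequality is secured the remaining case analysis is a routine variant of Proposition~\ref{maxdeg_J'_omega>0}.
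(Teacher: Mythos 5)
Your proposal is correct and follows essentially the same route as the paper: the same split into the regions $j+k\le n$ and $j+k\ge n$, reduction to maximizing $d_+[f(n,k;q)]$ over $k$, and the same three-case analysis according to the sign of $4\alpha_0-\tau-\tfrac12$. The only (cosmetic) difference is that you justify the reduction to $j=n$ by explicitly comparing the two corner values $g_1,g_2$, whereas the paper observes directly that the upward parabola in $j$ on the region $j+k\ge n$ has its vertex at a negative value of $j$ and is therefore increasing on $[n-k,n]$.
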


\begin{proof}
In light of \eqref{normalizedsum}, 
to determine the maximum degree of $J'_{W_{\omega}^{\tau}(K), n}(q)$, 
we need to understand  the maximum degrees of the functions $f(j,k;q)$ for $0\leq j,k \leq n$. 

Since the period of $d_+[J'_{K,n}(q)]=\delta'_K(n)$ is less than or equal to $2$, 
it follows that 
$\alpha(2k)=\alpha_0$, $\beta(2k)=\beta_0$, and $\gamma (2k)=\gamma_0$ and so 
\begin{equation}
\label{pdegK_omega<0}
d_+[J'_{K,2k}(q)]=\alpha(2k)(2k)^2+\beta(2k)2k+\gamma(2k)
=4\alpha_0 k^2+2\beta_0 k+\gamma_0
\end{equation}
for all $k\geq 0$.

Due to (\ref{deg6j}) the argument splits into two cases.

\medskip

\noindent 
\textbf{Case $1$.\ $j+k \le n$.} 

Applying the equalities (\ref{deg2j/nn2j}) and (\ref{deg6j}) to (\ref{fsummand}), 
we see that 
\begin{align*}
d_+[f(j,k;q)]
&= (-\frac n 2+\frac j 2) + (-\frac n 2 + \frac k 2) + \frac 1 2 (j+k+n) \\
& \qquad\qquad\qquad -\omega j(j+1) -\tau k(k+1) + d_+[J'_{K,2k}(q)]\\
&= - \omega j^2-( \omega-1) j -\tau k^2 - (\tau-1) k -\frac{n}{2}+d_+[J'_{K,2k}(q)]\\
& = -\omega\left(j + \frac{\omega - 1}{2\omega}\right)^2 + \omega\left(\frac{\omega -1}{2\omega}\right)^2 - \tau k^2 - (\tau -1)k - \frac{n}{2}+d_+[J'_{K,2k}(q)].
\end{align*}
Since  $\frac{\omega-1}{2\omega} \ge 0$, 
this is maximized uniquely at $j=n-k$ for a fixed $k$. Thus, 
this case is included in the next case.

\medskip

\noindent 
\textbf{Case $2$.\ $j+k \ge n$.} 

Applying the equalities (\ref{deg2j/nn2j}) and (\ref{deg6j}) to (\ref{fsummand}), 
we see that 
\begin{align*}
d_+[f(j,k;q)]
&= (-\frac n 2+\frac j 2) + (-\frac n 2 + \frac k 2) + \frac 1 2 (-j^2-2jk-k^2+2n+2jn+2kn-n^2) \\
& \qquad\qquad\qquad\qquad -\omega j(j+1) -\tau k(k+1) + d_+[J'_{K,2k}(q)]\\
&= -\frac{2 \omega+1}{2} j^2+(n-k- \omega+\frac 1 2)j \\
&\qquad\qquad\qquad\qquad +(- \tau -\frac 1 2) k^2+( \frac 1 2+n- \tau) k-\frac {n^2} 2+d_+[J'_{K, 2k}(q)] \\
&= 
-\left(\frac{2 \omega+1}{2} \right)\left(j-\frac{n-k- \omega+\frac{1}{2}}{2 \omega +1}\right)^2 + \frac{(n-k- \omega+\frac{1}{2})^2}{2(2\omega+1)}\\
& \qquad\qquad\qquad\qquad
+(- \tau -\frac 1 2) k^2+( \frac 1 2+n- \tau) k-\frac {n^2} 2+d_+[J'_{K, 2k}(q)]. 
\end{align*}
Since $\frac{n-k- \omega+\frac{1}{2}}{2 \omega +1}<0\leq n-k$ and $j\geq n-k$, 
this is maximized at $j=n$ for a fixed $k$.  
Thus, 
\begin{align*}
\max_{0\le j \le n-k}d_+[f(j,k;q)] 
& = d_+[f(n,k;q)] \\
&= -(\tau+\frac{1}{2})k^2 - (\tau - \frac{1}{2})k - \omega n^2 - (\omega-\frac{1}{2})n+ d_+[J'_{K,2k}(q)] \\
& =   -(\tau+\frac{1}{2})k^2 - (\tau - \frac{1}{2})k - \omega n^2 - (\omega-\frac{1}{2})n+ 4\alpha_0 k^2 + 2\beta_0 k + \gamma_0 \\
& =  (4\alpha_0-\tau-\frac{1}{2})k^2 + (2\beta_0-\tau + \frac{1}{2})k - \omega n^2 - (\omega-\frac{1}{2})n  + \gamma_0
\end{align*}
since we are assuming $d_+[J'_{K,2k}(q)] = \delta'_K(2k)$.
\medskip

We now consider the three cases of when $\alpha_0 > \frac{\tau}{4} + \frac{1}{8}$, 
$\alpha_0 < \frac{\tau}{4}+ \frac{1}{8}$, or $\alpha_0 = \frac{\tau}{4}+ \frac{1}{8}$.
First note that when $\alpha_0 \ne \frac{\tau}{4}+ \frac{1}{8}$, 
we may write
\begin{equation}
\label{eqn:d+bigk_omega<0}
d_+[f(n,k;q)] =
(4\alpha_0-\tau-\frac{1}{2}) \left(k+\frac {2\beta_0-\tau + \frac{1}{2}}{2(4\alpha_0-\tau-\frac{1}{2})} \right)^2
- \frac {(2\beta_0-\tau + \frac{1}{2})^2}{2 (4\alpha_0-\tau-\frac{1}{2})}
- \omega n^2 - (\omega-\frac{1}{2})n  + \gamma_0                
\end{equation}
for $n\geq k\geq 0$. 
Therefore, for $k >  -\frac {2\beta_0-\tau + \frac{1}{2}}{2(4\alpha_0-\tau-\frac{1}{2})}$, 
we see that $d_+[f(n,k;q)]$ is monotonically increasing or decreasing depending on whether $4\alpha_0-\tau-\frac{1}{2}$ is positive or negative respectively.

\smallskip

\noindent 
\textbf{Case $2.1$.\ $\alpha_0 > \frac{\tau}{4}+ \frac{1}{8}$.}

Note that $-\frac {2\beta_0-\tau + \frac{1}{2}}{2(4\alpha_0-\tau-\frac{1}{2})} < \frac n 2 $ 
for sufficiently large $n\geq 0$. 
Hence equation (\ref{eqn:d+bigk_omega<0}) shows that $d_+[f(n,k;q)]$ is monotonically increasing with respect to $k$ for $k  > -\frac {2\beta_0-\tau + \frac{1}{2}}{2(4\alpha_0-\tau-\frac{1}{2})} $ when $n$ is sufficiently large.  
Therefore, for sufficiently large $n$, 
$\displaystyle \max_{0 \le k \le n} d_+[f(n,k;q)]$ is uniquely realized at $k = n$. 
Hence 
\begin{align}
\label{eqn:a0big_omega<0}
d_+[ \sum_{j,k=0}^n f(j,k;q)]
&=d_+[ \sum_{k=0}^n f(n,k;q)]\\
&=d_+[f(n,n;q)] \nonumber \\
&=(4\alpha_0- \tau-\omega-\frac{1}{2}) n^2+(2\beta_0- \tau -\omega + 1)n+\gamma_0 \nonumber
\end{align}
for sufficiently large $n$. 

\smallskip

\noindent 
\textbf{Case $2.2$.\ $\alpha_0 < \frac{\tau}{4}+\frac{1}{8}$.}
 
Since 
$0 \ge -2\alpha_0+\beta_0+\frac{1}{2}
=-2(\alpha_0-\frac{\tau}{4}-\frac{1}{8})+\beta_0-\frac{\tau}{2}-\frac{1}{4}+\frac{1}{2}$ by hypothesis,  
then we have $2\beta_0 - \tau + \frac{1}{2} < 0$. 
Therefore, 
since $\alpha_0 <\frac{\tau}{4}+\frac{1}{8}$ by assumption
so that $\frac {2\beta_0-\tau + \frac{1}{2}}{2(4\alpha_0-\tau-\frac{1}{2})}> 0$, 
(\ref{eqn:d+bigk_omega<0}) shows that $d_+[f(n,k;q)]$ is monotonically decreasing with respect to $k$ for $k  \ge 0$.  
Thus $ \displaystyle \max_{0 \le k \le n} d_+[f(n,k;q)]$ is uniquely realized at $k = 0$. 
Hence 
\begin{align}
\label{eqn:a0small_omega<0}
d_+[ \sum_{j,k=0}^n f(j,k;q)]
&=d_+[ \sum_{k=0}^n f(n,k;q)]\\
&= d_+[f(n,0;q)] \nonumber \\
&=  - \omega n^2 - (\omega-\frac{1}{2})n  + \gamma_0. \nonumber
\end{align}

\smallskip

\noindent 
\textbf{Case $2.3$.\ $\alpha_0 = \frac{\tau}{4}-\frac{1}{8}$.}

By the assumption that $\alpha_0 = \frac{\tau}{4}-\frac{1}{8}$, our hypotheses of this proposition imply that $-2\alpha_0 + \beta_0 +\frac{1}{2} < 0$.    
Since $\beta_0- \frac{\tau}{2} + \frac{1}{4} 
= -2(\alpha_0 - \frac{\tau}{4}-\frac{1}{8}) + \beta_0 - \frac{\tau}{2}+ \frac{1}{2}
= -2 \alpha_0 + \beta_0 + \frac{1}{2}$, we now have that $2\beta_0 - \tau + \frac{1}{2} < 0$. 
Therefore 
\[
d_+[f(n,k;q)] = (2\beta_0-\tau + \frac{1}{2})k - \omega n^2 - (\omega-\frac{1}{2})n  + \gamma_0
\]
is monotonically decreasing with respect to $k$ for $0 \le k \le n$. 
Thus $\displaystyle \max_{0 \le k \le n}d_+[f(n,k;q)] $ is uniquely realized at $k=0$. 
Hence 
\begin{align}
\label{eqn:a0small2_omega<0}
d_+[ \sum_{j,k=0}^n f(j,k;q)]
&=d_+[ \sum_{k=0}^n f(n,k;q)]\\
&= d_+[f(n,0;q)] \nonumber \\
&=- \omega n^2 - (\omega-\frac{1}{2})n  + \gamma_0. \nonumber
\end{align}

\medskip

Finally we determine the maximum degree of 
$J'_{W_{\omega}^{\tau}(K), n}(q)$ via
\[d_+[J'_{W_{\omega}^{\tau}(K), n}(q)] = d_+[\sum_{j, k=0}^n f(j,k;q)] -  \frac{n}{2}\]
using equation (\ref{normalizeddegreesum}).
By equations (\ref{eqn:a0big_omega<0}), (\ref{eqn:a0small_omega<0}), 
and (\ref{eqn:a0small2_omega<0}) we have 
\[
\delta'_{W_{\omega}^{\tau}(K)}(n) = 
\begin{cases}
(4\alpha_0- \tau-\omega-\frac{1}{2}) n^2+(2\beta_0- \tau -\omega + \frac{1}{2})n+\gamma_0 &(\alpha_0 > \frac{\tau}{4}+\frac{1}{8}) \\
- \omega n^2 - \omega n  + \gamma_0 & (\alpha_0 < \frac{\tau}{4}+\frac{1}{8})\\
- \omega n^2 - 	\omega n  + \gamma_0 & (\alpha_0 = \frac{\tau}{4}+\frac{1}{8}, -2\alpha_0 + \beta_0 +\frac{1}{2} \neq 0).
\end{cases}
\]
Notice that only equation (\ref{eqn:a0big_omega<0}) requires that $n$ be suitably large.  
Equations (\ref{eqn:a0small_omega<0}) and (\ref{eqn:a0small2_omega<0}) hold for all $n \geq 0$.
\end{proof}

\begin{proposition}[$J'$, Normalized Sign Condition, $\omega>0$]
\label{maxdeg_J'_signcond} 
Let $K$ be a knot in $S^3$ that satisfies the Normalized Sign Condition.  Let 
$N'_K$ the smallest nonnegative integer such that $d_+[J'_{K,n}(q)]$ is a quadratic quasi-polynomial 
$\delta'_K(n)=\alpha(n) n^2 +\beta(n) n+\gamma(n)$ for $n \ge 2N'_K$. 
We put $\alpha_0:=\alpha (2N'_K)$, $\beta_0:=\beta (2N'_K)$, and $\gamma_0:=\gamma (2N'_K)$. 
We assume that the period of $\delta'_K(n)$ is less than or equal to $2$ and that 
$-2\alpha_0+\beta_0+\frac 1 2 \le 0$. 
Assume further that if $-2\alpha_0+\beta_0+\frac{1}{2} = 0$, 
then $\tau \ne 4 \alpha_0$. 
Then, for suitably large $n$, 
the maximum degree of the colored Jones polynomial 
of its $\tau$-twisted  $\omega$-generalized Whitehead double with $\omega>0$ is given by 
\begin{equation}
\label{tgWmax_signcond}
\delta'_{W_{\omega}^{\tau}(K)}(n)=
\begin{cases}
(4\alpha_0- \tau) n^2 +(2\beta_0-\tau) n+\gamma_0 & (\alpha_0> \frac{\tau}{4}) \\
-n + C_+(K, \tau)  &  (\alpha_0 < \frac{\tau}{4}) \\
-n+ C_+(K, \tau) & (\alpha_0 = \frac{\tau}{4}, -2\alpha_0+\beta_0+\frac{1}{2} \ne 0)
\end{cases}
\end{equation}
where $C_+(K, \tau)$ is a number that only depends on the knot $K$ and the number $\tau$. 

Furthermore $W_{\omega}^{\tau}(K)$ satisfies the Normalized Sign Condition.
\end{proposition}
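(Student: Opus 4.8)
The plan is to follow the skeleton of the proof of Proposition~\ref{maxdeg_J'_omega>0}, replacing the global hypothesis $d_+[J'_{K,n}(q)]=\delta'_K(n)$ by the Normalized Sign Condition and treating the finitely many ``bad'' colors $2k<2N'_K$ separately. As in that proof, starting from \eqref{normalizedsum} and applying \eqref{deg2j/nn2j} and \eqref{deg6j}, the two cases $j+k\ge n$ and $j+k\le n$ reduce, for each fixed $k$, to the maximizer $j=0$; this reduction only manipulates $d_+[J'_{K,2k}(q)]$ as a single quantity and uses $\omega>0$, so it carries over verbatim. Thus it again suffices to analyze $d_+[f(0,k;q)]=-\tau k^2-(\tau-1)k-\frac n2+d_+[J'_{K,2k}(q)]$ for $0\le k\le n$. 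The new feature is that $d_+[J'_{K,2k}(q)]=\delta'_K(2k)$ is only guaranteed for $k\ge N'_K$, so the quasi-polynomial cannot be substituted for small $k$.

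When $\alpha_0>\frac\tau4$, the computation of Proposition~\ref{maxdeg_J'_omega>0} localizes the maximum of $d_+[f(0,k;q)]$ at $k=n$. Since the value there grows quadratically in $n$ while the finitely many contributions from $k<N'_K$ are bounded independently of $n$, the maximizer $(j,k)=(0,n)$ remains the unique one for $n$ large, and $k=n\ge 2N'_K$ lies in the range where $d_+=\delta'$ holds. Uniqueness precludes cancellation of the top term, so $d_+[\sum f]=d_+[f(0,n;q)]$, and after subtracting $\frac n2$ via \eqref{normalizeddegreesum} we recover the quadratic $\delta'_{W_{\omega}^{\tau}(K)}(n)=(4\alpha_0-\tau)n^2+(2\beta_0-\tau)n+\gamma_0$ exactly as before.

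When $\alpha_0\le\frac\tau4$ (the remaining two cases), the $n$-dependence of $d_+[f(0,k;q)]$ is the constant $-\frac n2$, so $\max_{0\le k\le n}d_+[f(0,k;q)]=-\frac n2+C_+(K,\tau)$, where I set $C_+(K,\tau):=\max_{k\ge 0}\bigl(-\tau k^2-(\tau-1)k+d_+[J'_{K,2k}(q)]\bigr)$. This maximum is finite and independent of $n$: for $k\ge N'_K$ the quantity in parentheses equals $(4\alpha_0-\tau)k^2+(2\beta_0-\tau+1)k+\gamma_0$, whose leading coefficient is negative precisely because $\alpha_0<\frac\tau4$, or zero with negative linear coefficient $2\beta_0-\tau+1=2(-2\alpha_0+\beta_0+\frac12)<0$ when $\alpha_0=\frac\tau4$ and $-2\alpha_0+\beta_0+\frac12\ne0$; so the summand is eventually strictly decreasing, and $C_+(K,\tau)$ depends only on $K$ and $\tau$. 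The difficulty here is that several colors $k$ may realize the maximal degree $-\frac n2+C_+(K,\tau)$, and I must rule out cancellation of their leading coefficients. This is exactly where the Normalized Sign Condition enters: each maximizing summand contains the factor $J'_{K,2k}(q)$ of even color, whose top coefficient carries the common sign $\varepsilon'_0(K)$; combined with explicit sign determinations of the quantum factors $\langle 2k\rangle/\langle n,n,2k\rangle$ and of the tetrahedral symbol \eqref{tetsymb} evaluated at $j=0$, the leading coefficients of all maximizing terms share one sign and so add without cancelling. Hence $d_+[\sum f]=-\frac n2+C_+(K,\tau)$, and subtracting $\frac n2$ gives $\delta'_{W_{\omega}^{\tau}(K)}(n)=-n+C_+(K,\tau)$.

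It remains to verify that $W_{\omega}^{\tau}(K)$ satisfies the Normalized Sign Condition, for which I track the sign of the surviving leading coefficient of $\langle n\rangle J'_{W_{\omega}^{\tau}(K),n}(q)$: in the case $\alpha_0>\frac\tau4$ it is the sign of the single term $f(0,n;q)$, and otherwise the common sign of the maximizing terms isolated above. In each case I check that this total sign is a function of $n\bmod 2$ alone, using that $\varepsilon'_{2k}(K)=\varepsilon'_0(K)$ is constant and that the parity dependence of the quantum-integer and tetrahedral factors is governed by $n$. I expect the sign bookkeeping in the cases $\alpha_0\le\frac\tau4$ to be the main obstacle: one must pin down the signs of $\langle n,n,2k\rangle$ and of the tetrahedral symbol at the maximizing indices with enough precision to guarantee both non-cancellation and a clean parity dependence, the former being what makes the degree formula hold and the latter what propagates the Sign Condition to the double.
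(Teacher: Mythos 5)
Your proposal follows essentially the same route as the paper: reduce to $j=0$, note that the quasi-polynomial substitution is only valid for $k\ge N'_K$, handle $\alpha_0>\frac\tau4$ by letting the quadratic growth at $k=n$ swamp the finitely many uncontrolled small-$k$ terms, and in the remaining cases define $C_+(K,\tau)$ as the ($n$-independent) maximum over $k$ and invoke the Normalized Sign Condition to forbid cancellation among multiple maximizers. Your definition of $C_+(K,\tau)$ as a max over all $k\ge0$ agrees with the paper's (which takes the smallest maximizing $k_0$), and your eventual-decrease argument for its finiteness matches theirs.

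The one step you flag as ``the main obstacle'' --- pinning down the signs of $\langle n,n,2k\rangle$ and of the tetrahedral symbol at $j=0$ --- is in fact a non-issue, and this is worth knowing: at $j=0$ the symbols collapse, since $\langle n,n,0\rangle=\langle n\rangle$ and $\left\langle \begin{array}{ccc} n & n & 0 \\ n & n & 2k \end{array}\right\rangle=\langle n,n,2k\rangle$, so
\[
f(0,k;q)=\frac{1}{\langle n\rangle}\,[2k+1]\,q^{-\tau k(k+1)}\,J'_{K,2k}(q)
=\frac{[2k+1]\,q^{-\tau k(k+1)}\,J'_{K,2k}(q)}{(-1)^n[n+1]}.
\]
The denominator is independent of $k$, and $[2k+1]q^{-\tau k(k+1)}$ has leading coefficient $1$, so the leading sign of each numerator is exactly $\varepsilon'_{2k}(K)=\varepsilon'_0(K)$; no analysis of the tetrahedral symbol's sign is required. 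The same collapse settles your second worry: after multiplying by $\langle n\rangle$ (whose square has leading coefficient $1$), one gets $\ell_+[J'_{W^\tau_\omega(K),n}(q)]=\sum_{k_0}\ell_+[J'_{K,2k_0}(q)]$ over the maximizing indices, so the sign of the double's leading coefficient equals $\varepsilon'_0(K)$ for \emph{every} $n$ --- it is constant, not merely a function of $n\bmod2$ --- which gives the Normalized Sign Condition immediately. With that observation supplied, your argument is complete and coincides with the paper's.
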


\begin{remark}
The proof below of Proposition~\ref{maxdeg_J'_signcond} is very similar to the proof of Proposition~\ref{maxdeg_J'_omega>0}.  
The main difference is that Proposition~\ref{maxdeg_J'_omega>0} assumes $N_K'=0$ but not the Sign Condition.  
Without assuming that $N'_K=0$, the behavior of $d_+[J'_{K,n}(q)]$ is uncontrolled for integers $n<2N_K'$.   
This uncontrolled behavior potentially leads to cancellations that complicate the calculation of $d_+[J'_{W_{\omega}^{\tau}(K),n}(q)]$, even for arbitrarily large $n$.  
Hence we introduce the Sign Condition on  $J'_{K,n}(q)$ which prevents such problematic cancellations.  
This is used explicitly in Cases 2.2 and 2.3 below.
\end{remark}

\begin{proof}
In light of \eqref{normalizedsum}, to determine the maximum degree of $J'_{W_{\omega}^{\tau}(K), n}(q)$, 
we need to understand  the maximum degrees of the functions $f(j,k;q)$ for $0\leq j,k \leq n$. 

Since the period of $\delta'_K(n)$ is less than or equal to $2$, it follows that 
$\alpha(2k)=\alpha_0$, $\beta(2k)=\beta_0$, and $\gamma (2k)=\gamma_0$ and so 
\begin{equation}\label{pdegK_signcondition}
d_+[J'_{K,2k}(q)]=\alpha(2k)(2k)^2+\beta(2k)2k+\gamma(2k)
=4\alpha_0 k^2+2\beta_0 k+\gamma_0
\end{equation}
for $k\geq N'_K$ (so that $2k \geq 2N'_K$).

Due to (\ref{deg6j}) the argument splits into two cases.

\medskip

\noindent
\textbf{Case $1$.\ $j+k \ge n$.} 

Applying the equalities (\ref{deg2j/nn2j}) and (\ref{deg6j}) to (\ref{fsummand}), 
we see that 
\begin{align*}
d_+[f(j,k;q)]
&= (-\frac n 2+\frac j 2) + (-\frac n 2 + \frac k 2) + \frac 1 2 (-j^2-2jk-k^2+2n+2jn+2kn-n^2) \\
& \qquad\qquad\qquad\qquad -\omega j(j+1) -\tau k(k+1) + d_+[J'_{K,2k}(q)]\\
&= -\frac{2 \omega+1}{2} j^2+(n-k- \omega+\frac 1 2)j \\
&\qquad\qquad\qquad\qquad +(- \tau -\frac 1 2) k^2+( \frac 1 2+n- \tau) k-\frac {n^2} 2+d_+[J'_{K, 2k}(q)] \\
&= 
-\left(\frac{2 \omega+1}{2} \right)\left(j-\frac{n-k- \omega+\frac{1}{2}}{2 \omega +1}\right)^2 + \frac{(n-k- \omega+\frac{1}{2})^2}{2(2\omega+1)}\\
& \qquad\qquad\qquad\qquad
+(- \tau -\frac 1 2) k^2+( \frac 1 2+n- \tau) k-\frac {n^2} 2+d_+[J'_{K, 2k}(q)]. 
\end{align*}
Since $\frac{n-k- \omega+\frac{1}{2}}{2 \omega +1}<n-k$ and $j\geq n-k$, 
this is maximized at $j=n-k$ for a fixed $k$. 
Therefore this case is included in the next case. 

\medskip

\noindent 
\textbf{Case $2$.\ $j+k \le n$.} 

Applying the equalities (\ref{deg2j/nn2j}) and (\ref{deg6j}) to (\ref{fsummand}), 
we see that 
\begin{align*}
d_+[f(j,k;q)]
&= (-\frac n 2+\frac j 2) + (-\frac n 2 + \frac k 2) + \frac 1 2 (j+k+n) \\
& \qquad\qquad\qquad -\omega j(j+1) -\tau k(k+1) + d_+[J'_{K,2k}(q)]\\
&= - \omega j^2-( \omega-1) j -\tau k^2 - (\tau-1) k -\frac{n}{2}+d_+[J'_{K,2k}(q)]\\
& = -\omega(j + \frac{\omega - 1}{2\omega})^2 + \omega(\frac{\omega -1}{2\omega})^2 - \tau k^2 - (\tau -1)k - \frac{n}{2}+d_+[J'_{K,2k}(q)].
\end{align*}
Since $\omega > 0$ so that $\omega-1 \ge 0 $, 
this is maximized uniquely at $j=0$ for a fixed $k$. Thus, 
\begin{align*}
\max_{0\le j \le n-k}d_+[f(j,k;q)] 
& = d_+[f(0,k;q)] \\
&=  -\tau k^2-(\tau-1) k -\frac{n}{2} + d_+[J'_{K,2k}(q)] 
\end{align*}
Furthermore, note that by (\ref{pdegK_signcondition}), 
 this becomes
\begin{align}
\label{signcond.j=0}
d_+[f(0,k;q)] & =  -\tau k^2-(\tau-1) k -\frac{n}{2} + 4\alpha_0 k^2 + 2\beta_0 k + \gamma_0 \\
 & = (4\alpha_0- \tau)k^2 + (2\beta_0 - \tau +1)k-\frac{n}{2} + \gamma_0 \nonumber
\end{align}
for $k \geq {N'}_K$.

\medskip

We now consider the three cases of when $\alpha_0 > \frac{\tau}{4}$, $\alpha_0 < \frac{\tau}{4}$, 
or $\alpha_0 = \frac{\tau}{4}$.
First note that when $\alpha_0 \ne \frac{\tau}{4}$, 
we may write
\begin{equation}
\label{eqn:d+bigk_signcondition}
  d_+[f(0,k;q)] =(4\alpha_0- \tau) \left(k+\frac {2\beta_0- \tau +1}{2(4\alpha_0-\tau)} \right)^2
                  - \frac {(2\beta_0- \tau +1 )^2}{4(4\alpha_0- \tau)}-\frac n 2+\gamma_0
\end{equation}
for $n\geq k\geq N'_K$. 
 Therefore, for $k >  -\frac {2\beta_0- \tau +1}{2(4\alpha_0-\tau)}$ and $k \geq N'_K$, 
 we see that $d_+[f(0,k;q)]$ is monotonically increasing or decreasing depending on whether $4\alpha_0- \tau$ is positive or negative respectively.

In the cases $\alpha_0 < \frac{\tau}{4}$ and $\alpha_0 = \frac{\tau}{4}$, 
as well as in the proof that $J'_{W_{\omega}^{\tau}(K),n}(q)$ satisfies the Normalized Sign Condition, we will need the following claim.
\begin{claim}\label{claimsigncond}
Both the denominator and the sign of the leading term of the numerator of the rational function $f(0,k;q)$ are independent of $k\geq0$.
\end{claim}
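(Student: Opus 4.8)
The plan is to reduce $f(0,k;q)$ to a completely explicit closed form in which the dependence on $k$ is transparent, and then read off both assertions. First I would set $j=0$ in \eqref{fsummand}. Since $\langle 0 \rangle = [1] = 1$ and, from the definitions in \eqref{def:<n>}, the theta symbol evaluates to $\langle n,n,0\rangle = (-1)^{n}[n+1] = \langle n \rangle$, the entire $j$-factor collapses to the $k$-independent quantity
\[
\frac{\langle 0 \rangle}{\langle n,n,0\rangle} = \frac{(-1)^{n}}{[n+1]}.
\]

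The crux is to show that the two remaining $k$-dependent network factors cancel. I would evaluate the tetrahedral coefficient $\left\langle \begin{smallmatrix} n & n & 0 \\ n & n & 2k \end{smallmatrix}\right\rangle$ directly from \eqref{tetsymb}: with $E=0$ the admissibility data become $a_1=a_3=n$, $a_2=a_4=n+k$ and $b_1=b_3=n+k$, $b_2=2n$, so that $\max\{a_j\}=\min\{b_i\}=n+k$ and the internal sum degenerates to the single term $s=n+k$. Cancelling quantum factorials then gives
\[
\left\langle \begin{array}{ccc} n & n & 0 \\ n & n & 2k \end{array}\right\rangle = \langle n,n,2k \rangle,
\]
which is just the statement that a tetrahedral network with one edge colored $0$ degenerates to the theta network $\langle n,n,2k\rangle$ (one may alternatively read this off Lemma~\ref{lem:collapse} specialized to $2j=0$). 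Consequently
\[
\frac{\langle 2k \rangle}{\langle n,n,2k \rangle} \left\langle \begin{array}{ccc} n & n & 0 \\ n & n & 2k \end{array}\right\rangle = \langle 2k \rangle = [2k+1],
\]
and the whole summand collapses to
\[
f(0,k;q) = \frac{(-1)^{n}\,[2k+1]\,q^{-\tau k(k+1)}\,J'_{K,2k}(q)}{[n+1]}.
\]

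With this formula both assertions are immediate. The denominator is $[n+1]$, which does not involve $k$, giving the first claim. For the second, the numerator is a product of the constant $(-1)^n$, the balanced polynomial $[2k+1]$ whose top coefficient is $+1$, the monomial $q^{-\tau k(k+1)}$ with coefficient $+1$, and $J'_{K,2k}(q)$; since the sign of the leading term of a product is the product of the signs of the leading terms, the sign of the leading coefficient of the numerator is $(-1)^{n}\,\varepsilon'_{2k}(K)$. Because $2k \equiv 0 \pmod 2$, the Normalized Sign Condition (Definition~\ref{sign'}) yields $\varepsilon'_{2k}(K) = \varepsilon'_{0}(K)$ for all $k \ge 0$, so this sign is independent of $k$.

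I expect the main obstacle to be the tetrahedral evaluation $\left\langle \begin{smallmatrix} n & n & 0 \\ n & n & 2k \end{smallmatrix}\right\rangle = \langle n,n,2k\rangle$: one must verify that the summation range in \eqref{tetsymb} genuinely collapses to the single index $s=n+k$ and then track the factorial cancellations without error. The only other point requiring care is the remark that the sign of the leading term of a product factors through the individual leading coefficients, which is precisely what isolates the Normalized Sign Condition as the single input governing the sign's independence of $k$.
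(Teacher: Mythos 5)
Your proof is correct and follows essentially the same route as the paper: both set $j=0$, use $\langle 0\rangle/\langle n,n,0\rangle = 1/\langle n\rangle$ and the degeneration $\left\langle \begin{smallmatrix} n & n & 0 \\ n & n & 2k \end{smallmatrix}\right\rangle = \langle n,n,2k\rangle$ to collapse $f(0,k;q)$ to $\frac{1}{\langle n\rangle}[2k+1]q^{-\tau k(k+1)}J'_{K,2k}(q)$, and then invoke the Normalized Sign Condition together with the fact that $[2k+1]q^{-\tau k(k+1)}$ has leading coefficient $1$. The only cosmetic difference is that you place the $(-1)^n$ in the numerator rather than the denominator, and you verify the tetrahedral degeneration directly from the definition where the paper simply asserts it.
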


\begin{proof}
First observe that 
\begin{align*}
\label{f0ksummand}
f(0,k;q)  &=  
\frac{\langle 0\rangle}{\langle n,n,0 \rangle}\frac{\langle 2k \rangle}{\langle n,n,2k \rangle} \left\langle \begin{array}{ccc} n & n & 0 \\ n & n & 2k \end{array}\right\rangle 
q^{-\tau k(k+1)} J'_{K,2k}(q)\\
&= \frac{1}{\langle n \rangle}[2k+1] q^{-\tau k (k+1)} J'_{K,2k}(q)\\
&= \frac{1}{(-1)^n [n+1]}[2k+1] q^{-\tau k (k+1)} J'_{K,2k}(q)
\end{align*}
has numerator $[2k+1] q^{-\tau k (k+1)} J'_{K,2k}(q)$ and denominator $(-1)^n [n+1]$ that are polynomials in $\Q[q^{\pm1/4}]$. So the denominator is independent of $k$.
Since $K$ satisfies the Normalized Sign Condition by assumption, 
the sign of the leading term of the polynomial $J'_{K,2k}(q)$ is independent of $k$. 
Because the leading term of $[2k+1] q^{-\tau k (k+1)}$ is $1$, 
the sign of the leading term of the numerator $[2k+1] q^{-\tau k (k+1)} J'_{K,2k}(q)$ is also independent of $k$.
\end{proof}

\smallskip

\noindent 
\textbf{Case $2.1$.\ $\alpha_0 > \frac{\tau}{4}$.}

Note that $-\frac {2\beta_0-\tau+1}{2(4\alpha_0- \tau)} < \frac n 2 $ 
for sufficiently large $n\geq N'_K$. 
Hence equation (\ref{eqn:d+bigk_signcondition}) shows that $d_+[f(0,k;q)]$ is monotonically increasing with respect to $k$ for $k  > -\frac {2\beta_0-\tau+1}{2(4\alpha_0- \tau)}$ and $k>N'_K$ when $n$ is sufficiently large. 
Therefore, for sufficiently large $n$, $\displaystyle \max_{N'_K \le k \le n} d_+[f(0,k;q)]$ is uniquely realized at $k = n$. 
Moreover, since $d_+[f(0,n;q)]$ increases with $n$ to $\infty$ once $n$ is sufficiently large, 
it follows that
\[
\max_{0 \le k \le N'_K} d_+[f(0,k;q)] < d_+[f(0,n;q)]
\]
for suitably large $n$.
Therefore we have
\begin{equation}
\label{eqn:a0big_signcond}
  \max_{0\le k\le n} d_+[f(0,k;q)]=d_+[f(0,n;q)]=(4\alpha_0- \tau) n^2+(2\beta_0- \tau + \frac 1 2)n+\gamma_0
\end{equation} 
for suitably large $n$.

\smallskip

\noindent 
\textbf{Case $2.2$.\ $\alpha_0 < \frac{\tau}{4}$.}
 
Since 
$0 \ge -2\alpha_0+\beta_0+\frac{1}{2}
=-2(\alpha_0-\frac{\tau}{4})+\beta_0-\frac{\tau}{2}+\frac{1}{2}$ by assumption,  
then we have $2\beta_0 - \tau + 1 < 0$. 
Therefore, 
if $\alpha_0 <\frac{\tau}{4}$, 
so that $\frac {2\beta_0- \tau+1}{2(4\alpha_0- \tau)} > 0$, 
(\ref{eqn:d+bigk_signcondition}) shows that $d_+[f(0,k;q)]$ is monotonically decreasing with respect to $k$ for $N'_K \leq k \leq n$ when $n \geq N'_K$.  
Thus, assuming $n \geq N'_K$,  $ \displaystyle \max_{N'_K \le k \le n} d_+[f(0,k;q)]$ is uniquely realized at $k = N'_k$.

On the other hand,  
$\displaystyle \max_{0\le k \le N'_K} d_+[f(0,k;q)]$ may be realized at multiple values of $k$.  
This could potentially lead to cancellations among terms of maximum degree in the sum $\displaystyle \sum_{k=0}^n f(0,k;q)$. 
Since by Claim~\ref{claimsigncond} each term $f(0,k;q)$ is a rational function with the same denominator, 
any cancellations will be among the numerators.  
However Claim~\ref{claimsigncond} also shows the leading terms of the numerators of the terms $f(0,k;q)$ all have the same sign; hence there can be no cancellations.   
Thus there are no cancellations among terms of maximal degree in the sum $\displaystyle \sum_{k=0}^n f(0,k;q)$.

Consequently, there exists a smallest $k_0$ with $0 \le k_0 \le n$ such that $d_+[f(0,k_0;q)]$ gives 
the maximum degree of the sum $\displaystyle \sum_{k=0}^n f(0,k;q)$.   
In particular, since $\displaystyle \max_{N'_K \le k \le n} d_+[f(0,k;q)]$ is uniquely realized at $k = N'_K$ whenever $n \geq N'_K$, we know $0 \leq k_0 \leq N'_K$.
Hence, 
\begin{align*}
d_+[ \sum_{j,k=0}^n f(j,k;q)]
&=d_+[ \sum_{k=0}^n f(0,k;q)]\\
&= d_+[f(0,k_0;q)]  \\
&=-\tau k_0^2-(\tau-1) k_0 -\frac{n}{2} + d_+[J'_{K, 2k_0}(q)].  
\end{align*}
Furthermore when $n\geq N'_K$, the value of $k_0$ is fixed so that
\[
C_+(K, \tau)=-\tau k_0^2-(\tau-1) k_0 + d_+[J'_{K, 2k_0}(q)],
\]
 is a constant that only depends on the knot $K$ and the number $\tau$, 
 and we may write
\begin{equation}
\label{eqn:a0small_signcondition}
d_+[ \sum_{j,k=0}^n f(j,k;q)] = C_+(K,\tau) -\frac{n}{2}.
\end{equation}

\smallskip

\noindent 
\textbf{Case $2.3$.\ $\alpha_0 = \frac{\tau}{4}$.}

By the assumption that $\alpha_0 = \frac{\tau}{4}$, 
our hypotheses of this proposition imply that $-2\alpha_0 + \beta_0 +\frac{1}{2} < 0$.    
Since $\beta_0- \frac{\tau}{2} + \frac{1}{2} 
= -2(\alpha_0 - \frac{\tau}{4}) + \beta_0 - \frac{\tau}{2}+ \frac{1}{2}
= -2 \alpha_0 + \beta_0 + \frac{1}{2}$, we now have that $2\beta_0 - \tau + 1 < 0$. 
Therefore, 
\[
d_+[f(0,k;q)] = (2\beta_0 - \tau +1)k-\frac{n}{2} + \gamma_0
\]
is monotonically decreasing with respect to $k$ for $N'_K \le k \le n$ when $n \geq N'_K$. 
Thus, assuming $n \geq N'_K$,  
$\displaystyle \max_{N'_K \le k \le n}d_+[f(0,k;q)] $ is uniquely realized at $k=N'_K$. 

As in Case 2.2, $\displaystyle \max_{0 \le k \le N'_K}d_+[f(0,k;q)]$ may be realized at multiple values of $k$, 
potentially leading to cancellations among terms of maximum degree in the sum $\displaystyle \sum_{k=0}^n f(0,k;q)$.  
However, as discussed in Case 2.2, Claim~\ref{claimsigncond} implies there can be no such cancellations.  

The remainder of this case follows Case 2.2 verbatim to show that for $n \geq N'_k$
\begin{equation}
\label{eqn:a0small2_signcondition} 
d_+[ \sum_{j,k=0}^n f(j,k;q)] = C_+(K,\tau) -\frac{n}{2}.
\end{equation}

\medskip

With these cases complete, we determine the maximum degree of 
$J'_{W_{\omega}^{\tau}(K), n}(q)$ via
\[d_+[J'_{W_{\omega}^{\tau}(K), n}(q)] = d_+[\sum_{j, k=0}^n f(j,k;q)] -  \frac{n}{2}\]
using equation (\ref{normalizeddegreesum}).
Therefore, by equations (\ref{eqn:a0big_signcond}), (\ref{eqn:a0small_signcondition}), 
and (\ref{eqn:a0small2_signcondition}) we have for suitably large $n \geq N'_k$
\[
d_+[J'_{W_{\omega}^{\tau}(K), n}(q)]  = \delta'_{W_{\omega}^{\tau}(K)}(n) = 
\begin{cases}
(4\alpha_0- \tau) n^2+(2\beta_0- \tau) n + \gamma_0 &(\alpha_0 > \frac{\tau}{4}) \\
-n + C_+(K,\tau) & (\alpha_0 < \frac{\tau}{4})\\
-n + C_+(K,\tau)  & (\alpha_0 = \frac{\tau}{4}, -2\alpha_0 + \beta_0 +\frac{1}{2} \neq 0).
\end{cases}
\]

\medskip

Finally we show that $W_{\omega}^{\tau}(K)$ satisfies the Normalized Sign Condition.  
Given a polynomial $f \in \Q[q^{\pm 1/4}]$, let $\ell_+[f]$ be the coefficient of the term of highest degree.
Since $\ell_+[\langle n \rangle^2] = 1$ and 
\[ \langle n \rangle^2 J'_{W_{\omega}^{\tau}(K), n}(q) = \langle n \rangle \sum_{j,k=0}^n f(j,k;q) = \sum_{j,k=0}^n  \langle n \rangle f(j,k;q),\]
we have that 
\[\ell_+[  J'_{W_{\omega}^{\tau}(K), n}(q) ] = \ell_+\left[ \langle n \rangle^2 J'_{W_{\omega}^{\tau}(K), n}(q) \right] = \ell_+\left[ \sum_{j,k=0}^n  \langle n \rangle f(j,k;q)\right].\]
Next, note that 
\[
d_+[\langle n \rangle J'_{W_{\omega}^{\tau}(K), n}(q)] = d_+\left[\sum_{k=0}^n \langle n \rangle f(0,k;q)\right]
= d_+[\sum_{k_0 \in \mathcal{M}_n} \langle n \rangle f(0,k_0;q)]
\]
where 
\[\mathcal{M}_n = \left\{k_0  : 0 \leq k_0 \leq n, d_+[\langle n \rangle f(0,k_0;q)] = d_+\left[\sum_{k=0}^n \langle n \rangle f(0,k;q)\right]\right\}\]
 is a non-empty set due to Claim~\ref{claimsigncond}.
Then it follows that
\begin{align*}
 \ell_+[ J'_{W_{\omega}^{\tau}(K), n}(q) ]  &= \ell_+\left[ \sum_{k_0 \in \mathcal{M}_n}  \langle n \rangle f(0,k_0;q)\right] \\
 &= \sum_{k_0 \in \mathcal{M}_n} \ell_+[\langle n \rangle f(0,k_0;q)] \\
 &= \sum_{k_0 \in \mathcal{M}_n} \ell_+[(q^{k_0} + \dots + q^{-k_0}) q^{-\tau k_0(k_0+1)}J'_{K,2k_0}(q)] \\
 &= \sum_{k_0 \in \mathcal{M}_n} \ell_+[J'_{K,2k_0}(q)].
 \end{align*}
By the Normalized Sign Condition for $K$, the sign $\varepsilon'_{2k_0}(K)$ of $\ell_+[J'_{K,2k_0}(q)]$ is the same as the sign $\varepsilon'_{0}(K)$ for every $k_0\in \mathcal{M}_n$ across every $n$.  
Hence their sum always has this sign too.  
Thus the sign $\varepsilon'_n(W_{\omega}^{\tau}(K))$ of  $\ell_+[ J'_{W_{\omega}^{\tau}(K), n}(q) ]$ equals the sign $\varepsilon'_{0}(K)$ for all $n$ as claimed.
\end{proof}

\begin{remark}
\label{b=0}
In Propositions~\ref{maxdeg-signcond} and \ref{maxdeg_KT_Wmt-allw}, 
for technical reason, 
if $b_1 = 0$, then we assume that $a_1 \ne \frac{\tau}{4}$. 
When $b_1 = 0$, 
it is conjectured that $K$ is cabled \cite[Conjecture~5.1]{KT}. 
For example, 
if $K$ is an $(a, b)$-torus knot $T_{a, b}$, 
then we may have $\delta_{T_{a, b}}(n)$ with $4a_1 = ab$ and $b_1 = 0$ \cite{Garoufalidis}. 
\end{remark}

We close this section by computing $\delta_{W_{\omega}^{ab}(T_{a, b})}(n)$. 

\begin{proposition}
\label{ab-twisted-ab-torus} 
Let $a$ and $b$ be integers with $a>b>1$. 
Then the maximum degree of the colored Jones polynomial ${J}_{W_{\omega}^{ab}(K),n}(q)$ 
of $ab$-twisted $\omega$-generalized Whitehead double 
of $K=T_{a.b}$ 
is given by $-\frac{1}{2}n + \frac{1}{2}$.
\end{proposition}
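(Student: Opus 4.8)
The plan is to recognize this as precisely the degenerate case that the hypotheses of Propositions~\ref{maxdeg-signcond} and \ref{maxdeg_KT_Wmt-allw} exclude, namely $b_1=0$ together with $a_1=\frac{\tau}{4}$, and to carry out the degree estimate by hand. For $K=T_{a,b}$ the maximum degree is the genuine quadratic $\delta_{T_{a,b}}(n)=\frac{ab}{4}(n^2-1)$ \cite{Garoufalidis} (consistent with $a_1=\frac{ab}{4}$, $b_1=0$ of Remark~\ref{b=0}, and with $\delta_{T_{a,b}}(1)=0=d_+[J_{T_{a,b},1}(q)]$), so in the normalized coordinates of Subsection~\ref{computation_degrees} we have $\alpha_0=\frac{ab}{4}$, $\beta_0=\frac{ab}{2}-\frac12$, and $\gamma_0=a_1+b_1+c_1=0$. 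With $\tau=ab$ we therefore sit exactly on $\alpha_0=\frac{\tau}{4}$ with $-2\alpha_0+\beta_0+\frac12=0$, the configuration barred by those propositions. I would accordingly rerun the $\omega>0$ estimate of Propositions~\ref{maxdeg_J'_signcond} and \ref{maxdeg_J'_omega>0} (this being the range in which the stated degree holds; cf.\ Proposition~\ref{maxdeg-signcond}) through the step where, for each fixed $k$, the maximum of $d_+[f(j,k;q)]$ over $0\le j\le n$ is attained uniquely at $j=0$.

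At that point the key feature appears: substituting $\alpha_0=\frac{\tau}{4}$, $\beta_0=\frac{\tau}{2}-\frac12$, $\gamma_0=0$ into (\ref{signcond.j=0}) makes both the $k^2$-coefficient $4\alpha_0-\tau$ and the $k$-coefficient $2\beta_0-\tau+1$ vanish, leaving
\[ d_+[f(0,k;q)]=-\frac{n}{2}\qquad(0\le k\le n), \]
a value independent of $k$. Equivalently, since $d_+[J'_{T_{a,b},2k}(q)]=abk^2+(ab-1)k$, the numerator $[2k+1]\,q^{-ab\,k(k+1)}J'_{T_{a,b},2k}(q)$ of $f(0,k;q)$ exhibited in Claim~\ref{claimsigncond} has degree exactly $0$ for every $k$.

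The main obstacle --- and the reason this case needs separate treatment --- is that all $n+1$ summands of maximal degree now coincide at $-\frac{n}{2}$, so their leading terms could in principle cancel and lower the degree. This is resolved exactly as in Claim~\ref{claimsigncond}: torus knots satisfy the Sign Condition \cite{BMT_graph}, hence $T_{a,b}$ satisfies the Normalized Sign Condition, so the terms $f(0,k;q)$ share a common denominator while the leading coefficients of their numerators all carry the single sign $\varepsilon'_0(T_{a,b})$; no cancellation can occur and $d_+[\sum_{j,k=0}^n f(j,k;q)]=-\frac{n}{2}$. Feeding this into (\ref{normalizeddegreesum}) gives $\delta'_{W_\omega^{ab}(T_{a,b})}(n)=-\frac{n}{2}-\frac{n}{2}=-n$, and the normalization transformation of Subsection~\ref{computation_degrees} then yields $\delta_{W_\omega^{ab}(T_{a,b})}(n)=\delta'_{W_\omega^{ab}(T_{a,b})}(n-1)+\frac{n}{2}-\frac12=-(n-1)+\frac{n}{2}-\frac12=-\frac12 n+\frac12$, as claimed. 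Because the degree is constant in $k$, no largeness hypothesis on $n$ is required and the equality holds for all $n\ge1$.
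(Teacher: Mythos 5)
Your proof is correct and follows essentially the same route as the paper's: both reduce to the sum formula of Proposition~\ref{prop:equationsplitting}, locate the maximum over $j$ at $j=0$, find that $d_+[f(0,k;q)]=-\tfrac{n}{2}$ independently of $k$, rule out cancellation among the $n+1$ coincident top-degree terms, and finish with the normalization transformation. The only divergence is the no-cancellation step, where the paper computes the leading term of each $f(0,k;q)$ explicitly as $(-1)^nq^{-n/2}$ from Morton's formula (so the leading terms sum to $(n+1)(-1)^nq^{-n/2}\neq 0$), whereas you invoke the Sign Condition for torus knots via Claim~\ref{claimsigncond}; both justifications are valid.
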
 

\begin{proof}
The colored Jones polynomial of $K = T_{a, b}$ is explicitly computed in \cite{Mor}: 
\begin{equation}
J'_{K,n}(q)=\frac {q^{\frac 1 4 abn(n+2)}}{q^{\frac {n+1}2}-q^{-\frac {n+1}2}}
  \sum_{k=-\frac n 2}^{\frac n 2} 
    (q^{-abk^2+(a-b)k+\frac 1 2}-q^{-abk^2+(a+b)k-\frac 1 2}). 
\end{equation}
We note that if $n$ is even, 
then $k$ is an integer in the summand. 
We define the functions $f_\pm (\ell)$ on $\Z$ by 
$$
 f_{\pm}(\ell):=-ab \ell^2+(a\mp b)\ell \pm \frac 1 2. 
$$
Since 
$$
f_{\pm}(\ell)=-ab(\ell- \frac {a\mp b}{2ab})^2+\frac {(a\mp b)^2}{4ab}\pm \frac 1 2
$$
and $0<\frac {a\mp b}{2ab}<\frac 1 2$, $f_\pm (\ell)$ is maximized at $\ell=0$ and 
$f_- (0)<f_+ (0)=\frac 1 2$. 
Hence the maximum degree of $J'_{K,n}(q)$ for even $n$ is calculated  by 
$$
\frac 1 4 abn(n+2)-\frac {n+1} 2+\frac 1 2= \frac {ab} 4 n^2+\frac {ab-1} 2 n, 
$$
and the term of the maximum degree is $q^{\frac {ab} 4 n^2+\frac {ab-1} 2 n}$. 
Therefore, the term of the maximum degree of $J'_{K,2k}(q) $ is given by 
\begin{equation}
\label{ab-maxterm}
 q^{ab k^2+(ab-1)k}. 
\end{equation}

Recall from (\ref{fsummand}) that 
\[
f(j,k;q)  =  
\frac{\langle 2j \rangle}{\langle n,n,2j \rangle}\frac{\langle 2k \rangle}{\langle n,n,2k \rangle} \left\langle \begin{array}{ccc} n & n & 2j \\ n & n & 2k \end{array}\right\rangle 
q^{-\omega j(j+1) -ab k(k+1)} J'_{K,2k}(q), 
\]
and then by
Proposition~\ref{prop:equationsplitting} we have 
\[
\langle n \rangle J'_{W_{\omega}^{ab}(K), n}(q) =\sum_{j,k=0}^n f(j,k;q).
\]
 From the proof of Proposition~\ref{maxdeg_J'_omega>0}, 
 we have that 
 $d_+[f(j,k;q)]$ is maximized uniquely at $j=0$ for a fixed $k$. 
 Moreover, Since we have that 
$\langle n,n,0 \rangle = \langle n\rangle$ and 
$
\left\langle \begin{array}{ccc} n & n & 0 \\ n & n & 2k \end{array}\right\rangle 
=\langle n,n,2k\rangle,
$
we calculate 
\begin{eqnarray*}
 f(0,k;q) & =  & \frac 1 {\langle n\rangle}
 \frac{\langle 2k\rangle}{\langle n,n,2k \rangle} \left\langle \begin{array}{ccc} n & n & 0 \\ n & n & 2k \end{array}\right\rangle 
q^{ -ab k(k+1)} J'_{K,2k}(q)\\
&= & \frac 1 {\langle n\rangle} \langle 2k\rangle q^{ -ab k(k+1)} J'_{K,2k}(q). 
\end{eqnarray*} 
From (\ref{ab-maxterm}), 
the term of the maximum degree of $f(0,k;q)$ is  
calculated as 
\[
(-1)^n q^{-\frac n 2} q^k q^{-ab k(k+1)} q^{abk^2+(ab-1)k}=(-1)^n q^{-\frac n 2}. 
\]
Hence the term of the maximum degree of ${J'}_{W_{\omega}^{ab}(K),n}(q)$ is given by 
\[
(-1)^n q^{-\frac n 2} \sum_{k=0}^n (-1)^n q^{-\frac n 2}=(n+1) q^{-n}. 
\]

Hence ${\delta'}_{W_{\omega}^{ab}(K)}(n) = -n$.
Apply the transformation given in the proof of Propositions~\ref{maxdeg-signcond} and \ref{maxdeg_KT_Wmt-allw} (after Definition~\ref{sign'}), 
we have 
\[\delta_{W_{\omega}^{ab}(K)}(n) = \delta'_{W_{\omega}^{ab}(K)}(n-1)+\frac {1}{2} n-\frac{1}{2} = -\frac{1}{2}n + \frac{1}{2}.\]
\end{proof}

\section{Computations of slopes and Euler characteristics for generalized Whitehead doubles}
\label{essential_surfaces}

\subsection{Exteriors of twisted, generalized Whitehead doubles and those of two-bridge links}
\label{exterior}

We start with a $2$--bridge link $k_1 \cup k_2$, which is expressed as 
$[2, 2\omega, -2]$ with $\omega \ge 1$ depicted in Figure~\ref{fig:2_2m_-2} below. 
Then $k_2$ lies in an unknotted solid torus $V = S^3 - \mathrm{int}N(k_1)$. 
Let us perform $\tau$ twist along $k_1$ to obtain a knot $k_{\omega}^{\tau}$ ($\tau \in \mathbb{Z}$), 
which is embedded in $V$. 
Note that $k_1 \cup k_{\omega}^{\tau}$ does not form a $2$--bridge link in general, 
but its exterior is orientation preservingly homeomorphic to the exterior of the $2$--bridge link $k_1 \cup k_2$. 
If $(\omega, \tau) = (1, 0)$, then $k_1 \cup k_1^0$ is the negative Whitehead link.  

Let us take  preferred meridian-longitude pairs 
$(\mu_1, \lambda_1)$, $(\mu, \lambda)$ of $k_1$, $k_{\omega}^{\tau}$, respectively. 
Then take an orientation preserving embedding 
$f : V \to S^3$ which sends the core of $V$ to a knot $K$ and 
$f(\mu_1) = \lambda_K$ and $f(\lambda_1) = \mu_K$, 
where $(\mu_K, \lambda_K)$ is a preferred meridian-longitude pair of $K$. 
The image $f(k_{\omega}^{\tau})$ is $W_{\omega}^{\tau}(K)$, 
a $\tau$--twisted, $\omega$--generalized Whitehead double of $K$. 

\begin{figure}[!ht]
\includegraphics[width=0.6\linewidth]{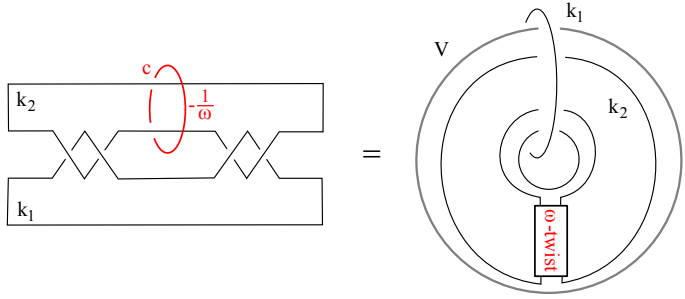}
\caption{$k_1 \cup k_2$ is a two bridge link $[2, 2\omega, -2] = \mathcal{L}_{\frac{4\omega-1}{8\omega}}$.}
\label{fig:2_2m_-2}
\end{figure}

This observation shows that the exterior of $W_{\omega}^{\tau}(K)$ is the union of 
the exterior $E(K)$ and $V - \mathrm{int}N(k_2)$; 
the latter is the exterior of the two-bridge link $k_1 \cup k_2$, which is expressed as 
$[2, 2\omega, -2]$.

Since $k_2 = k_{\omega}^{\tau}$ has winding number $0$ in $V$, 
and is therefore null-homologous in $V$, 
we have: 

\begin{lemma}
\label{twist}
Let $F$ be an essential surface in $V - \mathrm{int}N(k_2) = V - \mathrm{int}N(k_\omega^0)$  
such that $F \cap \partial V$ has slope $r_V$ and 
$F \cap \partial N(k_2)$ has slope $r$. 
Apply $\tau$ twist to obtain $k_{\omega}^{\tau}$ and an essential surface $F_{\tau}$, 
the image of $F$,  
in $ V - \mathrm{int}N(k_{\omega}^{\tau})$. 
Then $F_{\tau} \cap \partial V$ has slope $r_V + \tau$ and 
$F \cap \partial N(k_\omega^{\tau})$ has slope $r$.  
\end{lemma}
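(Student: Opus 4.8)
The plan is to realize the $\tau$--twist along $k_1$ as a self-homeomorphism $h_\tau\co V \to V$ and then track what it does to the two boundary slopes separately. Since $k_1$ is unknotted, $V = S^3 - \mathrm{int}\,N(k_1)$ is a solid torus whose meridian disk $D'$ is the intersection with $V$ of a disk bounded by $k_1$; performing the $\tau$--twist along $k_1$ is exactly the $\tau$--fold Dehn twist of $V$ supported in a neighborhood of $D'$. By construction $h_\tau(k_\omega^0) = k_\omega^\tau$, so $h_\tau$ restricts to a homeomorphism $V - \mathrm{int}\,N(k_2) \to V - \mathrm{int}\,N(k_\omega^\tau)$ carrying $F$ to $F_\tau$; because $h_\tau$ is a homeomorphism, $F_\tau$ is essential whenever $F$ is. It then remains only to compute the action of $h_\tau$ on $\partial V$ and on $\partial N(k_2)$.

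First I would analyze $\partial V$. Writing $\partial D' = \mu_V$ for the meridian of the solid torus $V$ (which equals the longitude $\lambda_1$ of $k_1$), the homeomorphism $h_\tau$ acts on $\partial V$ as the $\tau$--fold Dehn twist along $\mu_V$: it fixes $\mu_V$ and sends $\lambda_V \mapsto \lambda_V + \tau\,\mu_V$. Consequently a curve of slope $r_V$, represented by $p\,\mu_V + q\,\lambda_V$ with $r_V = p/q$ relative to $(\mu_V,\lambda_V)$, is carried to $(p + q\tau)\,\mu_V + q\,\lambda_V$, which has slope $r_V + \tau$. Thus $F_\tau \cap \partial V$ has slope $r_V + \tau$, as asserted.

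Next I would analyze the knot boundary $\partial N(k_2)$. A meridian is taken to a meridian automatically, since $h_\tau$ carries the solid torus $N(k_2)$ homeomorphically onto $N(k_\omega^\tau)$. The crucial point is that the preferred (Seifert-framed) longitude is preserved, and this is exactly where the winding number $0$ hypothesis enters: a $\tau$--twist along a disk meeting the knot algebraically $w$ times alters the framing by $\tau w^2$, and here $w = 0$, so the $0$--framing is unchanged. Equivalently, since $k_2$ is null-homologous in $V$ it bounds a Seifert surface $\Sigma \subset V$, and $h_\tau(\Sigma)$ is a Seifert surface for $k_\omega^\tau$ in $V$ because $h_\tau$ fixes $\mu_V$ up to homology and so acts trivially on $H_1(V)$, preserving winding numbers; as $V$ is standardly embedded, this $V$--Seifert framing coincides with the preferred framing in $S^3$. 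Hence $h_\tau$ sends the preferred longitude $\lambda$ of $k_2$ to the preferred longitude $\lambda$ of $k_\omega^\tau$, and therefore $F_\tau \cap \partial N(k_\omega^\tau)$ has the same slope $r$.

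The two Dehn-twist slope computations are routine; the only genuine obstacle lies in the knot-boundary claim, where one must ensure that the twisting disk meets $k_2$ algebraically zero times so that the $0$--framing is truly preserved. This is precisely the content of the winding number $0$ observation recorded immediately before the lemma, and once it is invoked the computation on $\partial N(k_2)$ is immediate.
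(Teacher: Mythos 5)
Your proof is correct and follows essentially the same reasoning as the paper, which states this lemma without a written proof as an immediate consequence of the observation that $k_2$ has winding number $0$ in $V$ --- precisely the key point you isolate and justify by realizing the twist as a Dehn twist along a meridian disk of $V$ and noting the framing change $\tau w^2$ vanishes. Both slope computations are the standard ones and check out.
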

 
In the following subsections~\ref{sec:two-bridge_link}--\ref{sec:genaralizedwhiteheadalgorithm}, 
we will investigate essential surfaces in the exterior $E(k_1 \cup k_2)$ of a two-bridge link 
$[2, \omega, -2]$ in details. 

\subsection{Essential surfaces in two-bridge link exteriors}
\label{sec:two-bridge_link}
Here we extend the work in \cite[Section 5]{HS} to catalogue all the properly embedded essential surfaces in the exterior of the two-bridge link $\calL_{(4\omega-1)/8\omega}$ for integers $\omega \neq 0$.

Hatcher-Thurston show how a certain collection of ``minimal edge paths'' in the Farey diagram from $1/0$ to $p/q$ are in correspondence with the properly embedded incompressible and $\bdry$--incompressible surfaces with boundary in the exterior of the two bridge knot $\calL_{p/q}$ \cite{HT}.
Floyd-Hatcher extend this to two-bridge links of two components \cite{FH} from which Hoste-Shanahan discern the boundary slopes of such surfaces \cite{HS}, building upon work of Lash \cite{Lash}.  

Here, for use with satellite constructions, 
we use the works of Floyd-Hatcher \cite{FH} and Hoste-Shanahan\cite{HS} to catalog all the properly embedded essential surfaces in the exterior of the $\omega$ generalized Whitehead link $\calL_{(4\omega-1)/8\omega}$, 
their Euler characteristics, their boundary slopes, 
and number of boundary components; 
if $\omega > 0$, 
put $k = \omega$ to obtain $\calL_{(4k-1)/8k}$ ($k \ge 1$) and 
if $\omega < 0$, 
then put $k = - \omega$ to obtain $\calL_{(-4k-1)/ (-8k)} = \calL_{(4k+1)/8k}$ ($k \ge 1$).

\begin{remark}
While \cite{FH} uses the continued fraction convention $[x_1, x_2, \dots, x_n] = 1/(x_1 + 1/(x_2 + \dots 1/x_n))$, \cite{HS} appears to use the convention $[x_0, x_1, x_2, \dots, x_n] = x_0 + 1/(x_1 + 1/(x_2 + \dots 1/x_n))$.  
To remain consistent with this notation and the depiction of $\calL_{3/8}$ in \cite[Figure 1]{HS},  
the link $\calL_{(4k-1)/8k}$ is actually obtained by $-1/k$ surgery on the middle circle of \cite[Figure 9]{HS} which produces $2k$ right-handed crossings. 
\end{remark}

We refer the reader to both the original paper \cite{FH} and Hoste-Shanahan's recounting of it \cite[Section 2]{HS} for details on the Floyd-Hatcher algorithm.  
Here we briefly recall the algorithm and quickly work through the application of it for the Whitehead link $\calL_{3/8}$ based on the more general treatment for the links $\calL_{(4\omega-1)/8\omega}$ given in \cite[Section 5]{HS}.

\subsection{The Algorithm}
\label{algorithm}
Figure~\ref{fig:FareyDiagrams} shows three diagrams. 
The diagram $D_1$ is the common Farey diagram.  
Pair adjacent triangles into quadrilaterals containing a diagonal so that a vertex is an endpoint of either all or none of the diagonals of the incident quadrilaterals.  
The diagram $D_0$ is obtained by switching the diagonal in each of the quadrilaterals.   
The diagram $D_t$ is obtained by replacing these diagonals with inscribed quadrilaterals.  
Actually, $D_t$ represents a parameterized family of diagrams for $t \in [0,\infty]$: 
with appropriate parameterizations of the edges of the quadrilaterals by $[0,1]$ the vertices of the inscribed quadrilaterals in $D_t$ are located at either $t$ or $1/t$.  
The diagrams $D_0=D_\infty$ and $D_1$ arise as limits where the inscribed quadrilaterals degenerate to diagonals.  
The edges of $D_1$ are labeled $A$ and $C$, 
the edges of $D_0=D_\infty$ are labeled $B$ and $D$, 
and these induce labels on $D_t$.   
Orientations are chosen on a basic set of edges in $D_t$ and passed to the rest of the edges of $D_t$ by the action of the M\"obius transformations in which the ideal triangle with vertices $\{1/0,0/1,1/1\}$ is a fundamental domain.  
We omit the orientations in Figure~\ref{fig:FareyDiagrams}; see \cite{HS} for details.

\begin{figure}
	\centering
	\includegraphics[width=\textwidth]{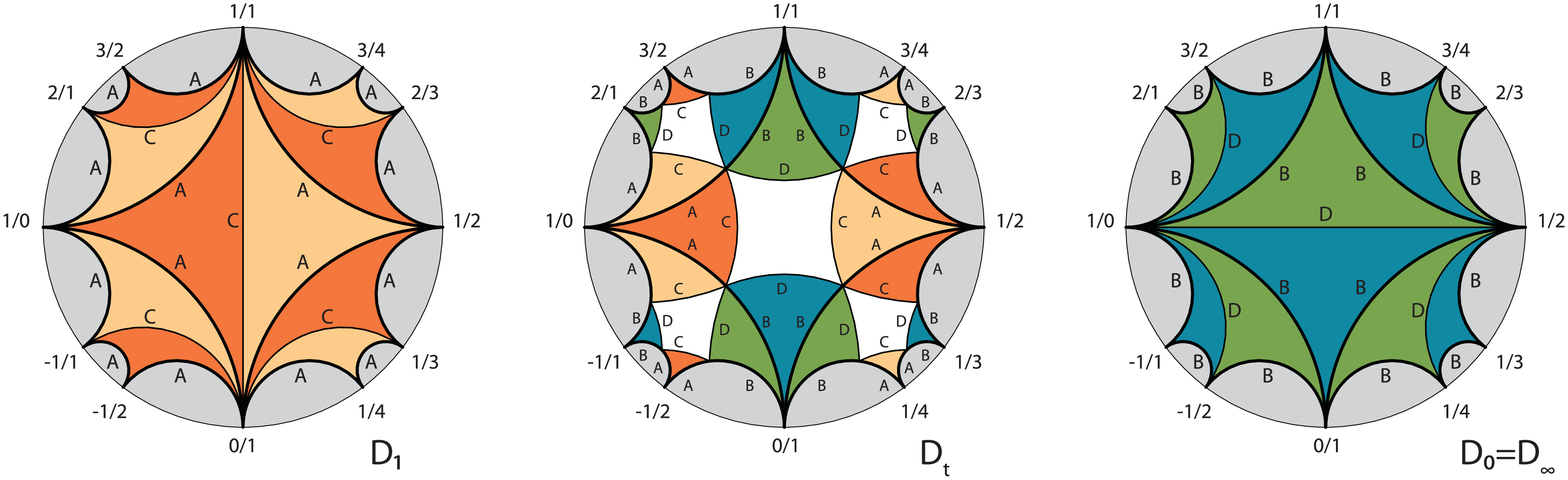}
	\caption{The diagrams $D_1$, $D_t$, and $D_0=D_\infty$, cf. \cite[Figures 3 and 4]{HS}. }
	\label{fig:FareyDiagrams}
\end{figure}

For a two bridge link $\calL_{p/q}$ (where $q$ is even), 
Floyd-Hatcher show that a properly embedded essential surface in the exterior of the link is carried by one of finitely many branched surfaces associated to ``minimal edge paths'' in $D_t$ from $1/0$ to $p/q$.  
A {\em minimal edge path} in $D_t$ is a consecutive sequence of edges of $D_t$ 
(ignoring their orientations) such that the boundary of any face of $D_t$ contains at most one edge of the path. 
Then for each minimal edge path, a branched surface is assembled from the sequence of edges by stacking four blocks of basic branched surface $\Sigma_A, \Sigma_B, \Sigma_C, \Sigma_D$ corresponding to the labels $A,B,C,D$ that are positioned according to the endpoints and orientation of its edge and whether $t< 1$ or $t> 1$.  
These blocks of basic branched surfaces are illustrated in Figure~\ref{fig:branchtypes} for $t>1$ 
(cf.\ \cite[Figure 2]{HS} and \cite[Figure 3.1]{FH}) and are weighted in terms of the parameters $\alpha>\beta>0$ where  $t = \alpha/\beta$
and the extra integral parameter $n$ between $0$ and $\beta$ for $\Sigma_A$ or between $0$ and $\alpha-\beta$ for $\Sigma_D$. 
(This extra parameter $n$ allows for the construction of homeomorphic but non-isotopic surfaces with the same boundary slopes, see \cite{HS,FH}.) 
For $t<1$, the blocks are rotated $180^\circ$ corresponding to an exchange of the components of $\calL_{p/q}$ and the parameters $\alpha$ and $\beta$ are swapped in the figure.  

\begin{figure}
\centering
\includegraphics[width=5in]{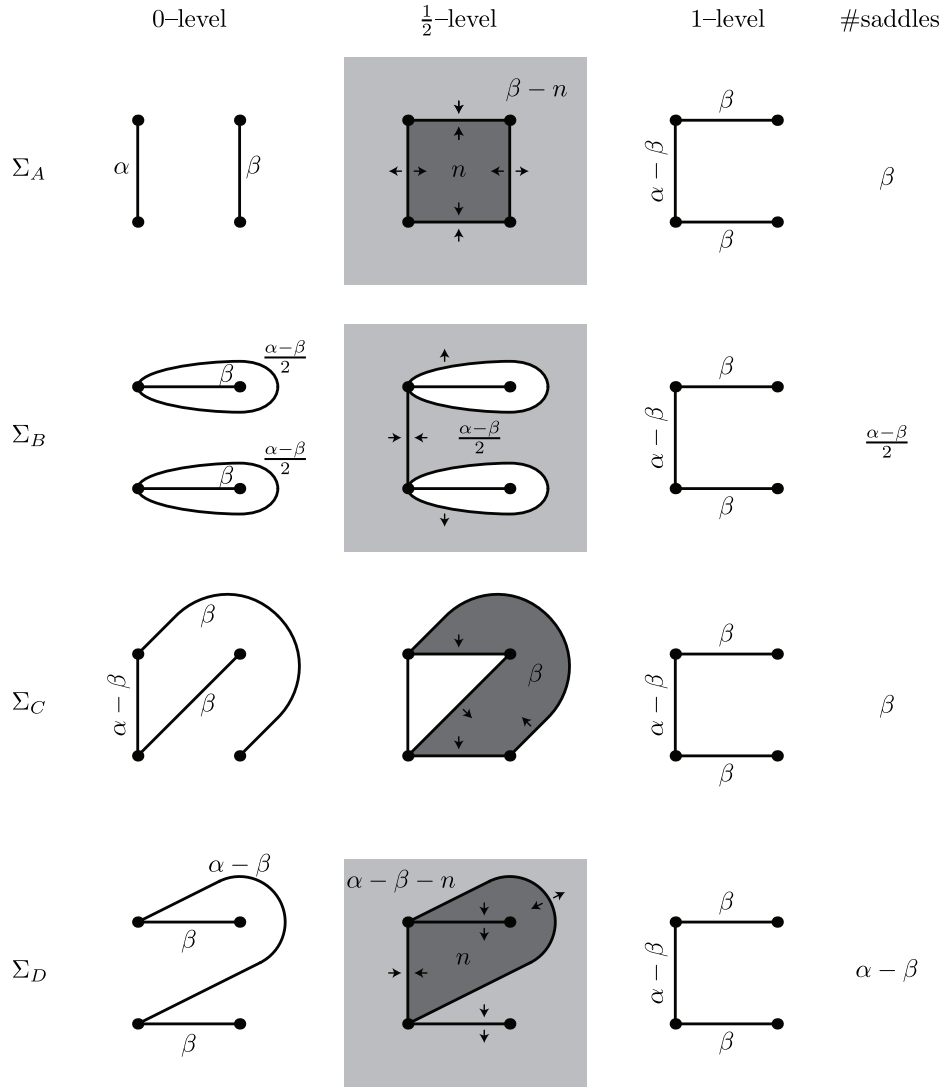}
\caption{The four basic weighted branched surfaces (reproduced from \cite[Figure 2]{HS}, see also \cite[Figure 3.1]{FH}) along with the corresponding number of saddles for the carried surface, when $\alpha\geq \beta$.  
When $\alpha < \beta$, rotate the images $180^\circ$ and swap $\alpha$ and $\beta$.}
\label{fig:branchtypes}
\end{figure}

 In this manner, every minimal edge path in $D_t$ for $t \in (0,1) \cup (1, \infty)$ produces a weighted branched surface, 
 with weights in terms of the parameters $\alpha$ and $\beta$ such that $t=\alpha/\beta$ 
 (along with auxiliary parameters for instances of the blocks $\Sigma_A$ and $\Sigma_D$). 
These minimal edge paths $\gamma$ in $D_t$ with their parameters $\alpha, \beta$ describe specific surfaces $F_{\gamma, \alpha, \beta}$ 
which may have multiple components and may be non-orientable. 
If it is non-orientable, 
then we may replace $F_{\gamma, \alpha, \beta}$ by the boundary of a tubular neighborhood (a twisted $I$--bundle over $F_{\gamma, \alpha, \beta}$), 
which is orientable and associated with parameters $2\alpha, 2\beta$; 
so the resulting orientable essential surface is associated with $F_{\gamma, 2\alpha, 2\beta}$. 
In the following we omit parameters $\alpha, \beta$ and assume that $F_{\gamma}$ is orientable, 
but it may have multiple components.

Taking the limits $t \to 0$ or $t \to \infty$ so that $\alpha=0$ or $\beta=0$ produces surfaces associated to minimal edges paths in $D_0 = D_\infty$.   
Taking the limits $t \to 1$ so that $\alpha = \beta$ also produces surfaces associated to minimal edge paths in $D_1$.  
However, since $\alpha-\beta=0$ in this case, 
the basic surface $\Sigma_A$ with its extra parameter $n$ may be used in place of $\Sigma_D$ to produce more surfaces.   

\medskip

Floyd and Hatcher \cite{FH} establish the following classification of essential surfaces in the exterior of two-bridge links. 

\begin{theorem}[\cite{FH}]
\label{FloydHatcher}
Let $\calL_{p/q}$ be a two-bridge link (with $q$ even). 
The orientable incompressible and meridionally incompressible surfaces in $S^3 - \nbhd(\calL_{p/q})$ without peripheral components are (up to isotopy) exactly the orientable surfaces carried by the collection of branched surfaces associated to minimal edge paths in $D_t$ from $1/0$ to $p/q$ for $t \in [0,\infty]$. 
\end{theorem}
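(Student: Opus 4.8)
The plan is to reproduce the argument of Floyd--Hatcher \cite{FH}, which adapts to two-component two-bridge links the strategy pioneered by Hatcher--Thurston \cite{HT} for two-bridge knots. The starting point is the standard genus-zero bridge decomposition: write $S^3 = B_+ \cup_S B_-$, where $S$ is the bridge sphere and $\calL_{p/q}$ meets each ball $B_\pm$ in two trivial arcs, the tangle decomposition being recorded by the continued fraction expansion of $p/q$. First I would put a given orientable, incompressible, meridionally incompressible surface $F$ in $S^3 - \nbhd(\calL_{p/q})$, having no peripheral components, into normal form with respect to this decomposition, making $F$ transverse to a height function adapted to the sequence of rational tangles. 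After an isotopy removing trivial circles and compressing trivial disk pieces, the intersection of $F$ with each elementary twist region becomes a union of saddles, and the only surviving local models are the four basic weighted branched pieces $\Sigma_A,\Sigma_B,\Sigma_C,\Sigma_D$ described above.

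The second step is to read off, as one sweeps across the successive twist regions dictated by the continued fraction of $p/q$, the sequence of saddle types, and to show that this sequence is precisely the combinatorial data of a consecutive edge path in the diagram $D_t$ running from $1/0$ to $p/q$. Each edge carries a label $A,B,C,D$ according to the block sitting over it, while the weights $\alpha,\beta$ (with $t=\alpha/\beta$) count the transverse sheets, and the auxiliary parameter $n$ on $\Sigma_A$ or $\Sigma_D$ accounts for the homeomorphic-but-non-isotopic surfaces. The parameter $t\in[0,\infty]$ interpolates between the boundary data on the two link components, with the limiting diagrams $D_0=D_\infty$ and $D_1$ absorbing the degenerate cases $\alpha=0$, $\beta=0$, and $\alpha=\beta$.

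The crux is the third step: establishing that $F$ is incompressible and meridionally incompressible \emph{if and only if} the associated edge path is minimal, that is, no face of $D_t$ contains two of its edges. For one direction I would argue contrapositively, showing that if two edges of the path lie on a common face of $D_t$ then the corresponding portion of the branched surface carries a compressing or $\bdry$-compressing disk, contradicting essentiality. Conversely, for a minimal path I would verify that consecutive saddles cannot be cancelled and that a careful inspection of the local branched-surface models produces no compressing disk, no meridional compression, and no closed or peripheral components once the trivial pieces are discarded. This equivalence between the face condition and essentiality is the main obstacle, since it demands ruling out every compressing configuration arising from the local models and matching these exhaustively to the finitely many faces of $D_t$.

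Finally, I would address completeness and uniqueness: every isotopy class of essential surface is realized by some minimal edge path with appropriate weights, and distinct path-and-weight data yield non-isotopic surfaces, modulo the standard identification of a non-orientable $F_{\gamma,\alpha,\beta}$ with the boundary of its twisted $I$-bundle (which is why it is replaced by the orientable $F_{\gamma,2\alpha,2\beta}$). Since there are only finitely many minimal edge paths from $1/0$ to $p/q$, this produces the stated finite classification of essential surfaces in the exterior of $\calL_{p/q}$.
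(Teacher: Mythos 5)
You should first note a mismatch of expectations: the paper does not prove Theorem~\ref{FloydHatcher} at all---it is quoted from Floyd--Hatcher \cite{FH} and used as a black box---so there is no internal proof to compare your attempt against. What you have written is a road map of the argument of \cite{FH} (itself modeled on \cite{HT}), and as a road map it is essentially faithful: sweep the link exterior by level $4$--punctured spheres adapted to the continued-fraction tangle decomposition, normalize the surface so that it meets each twist region in saddles modeled on the blocks $\Sigma_A,\Sigma_B,\Sigma_C,\Sigma_D$, read off an edge path in $D_t$ with $t=\alpha/\beta$ recording the two meridional weights, and characterize essentiality by minimality of the path.

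As a proof, however, the proposal has genuine gaps, because the two steps carrying all of the mathematical content are only announced. First, the reduction to the four local models is not a routine normal-form manipulation: it rests on the classification of essential weighted curve systems on a $4$--punctured sphere with weights $(\alpha,\alpha,\beta,\beta)$ and on identifying the resulting configurations with the vertices of $D_t$; this is exactly what forces the one-parameter family of diagrams $D_t$, with the degenerations $D_0=D_\infty$ and $D_1$ at $\alpha\beta=0$ and $\alpha=\beta$, and nothing in your sketch explains where $D_t$ and its inscribed quadrilaterals come from. Second, the equivalence ``the carried surface is incompressible and meridionally incompressible if and only if the edge path is minimal,'' together with the completeness claim that every such surface is isotopic to a carried one, \emph{is} the theorem; you correctly flag it as the crux and the main obstacle, but you do not supply the case analysis that resolves it, so the argument is circular at precisely the point where work is required. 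Finally, your closing assertion that distinct path-and-weight data yield non-isotopic surfaces is both unnecessary for the statement as quoted (which claims only that the carried surfaces exhaust the essential ones) and stronger than what is needed or established here; I would drop it.
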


\begin{remark}  
Let $S$ be a properly embedded surface in the exterior of a link $L$ in $S^3$.
Then $S$ is {\em meridionally incompressible}  if for any embedded disk $D$ in $S^3$ with $D \cap S = \bdry D$ such that $L$ intersects $D$ transversally in a single interior point, 
there is an annulus embedded in $S$ whose boundary is $\bdry D$ and a component of $\bdry S$ that is a meridian of $L$.
A component of $S$ is {\em peripheral} if it is isotopic through the exterior of $L$ into $\bdry \nbhd(L)$.
If $S$ has a $\bdry$--compressing disk, 
then either $L$ is a split link or the $\bdry$-compressible component of $S$ is either compressible or peripheral.  
Hence the surfaces in Theorem~\ref{FloydHatcher} are also $\bdry$--incompressible.
\end{remark}

\subsection{Euler characteristics of carried surfaces}

The Euler characteristic of a surface carried by one of these weighted branched surfaces associated to an edge path in $D_t$ may be calculated from the branch pattern associated to the edge path and the weights $\alpha$ and $\beta$. 

\begin{lemma}
Let $S$ be the surface carried by the weighted branched surface associated to an edge path $\gamma$ in $D_t$ where $t = \alpha/\beta$. 
If $\alpha \geq \beta$, then  \[\chi(S) = (\alpha+\beta) - \sum s_i(\alpha,\beta)\] where $s_i(\alpha,\beta)$ is the number of saddles of the surface carried by the basic branched surface associated to the label of the $i$th edge of $\gamma$ and weighted by $\alpha$ and $\beta$ as shown in Figure~\ref{fig:branchtypes}.  
If $\alpha<\beta$, exchange $\alpha$ and $\beta$.
\end{lemma}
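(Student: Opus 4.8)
The plan is to compute $\chi(S)$ by slicing $S$ along the stacking direction of the branched surface and applying a Morse-theoretic count organized by the edge path. First I would fix the height function $h\co S \to [0,1]$ given by the coordinate along which the blocks $\Sigma_A,\Sigma_B,\Sigma_C,\Sigma_D$ are stacked as one traverses $\gamma$ from $1/0$ to $p/q$, after isotoping $S$ so that $h$ restricts to a Morse function whose critical points all lie in the interiors of the blocks and whose behavior on $\partial S \subset \partial\nbhd(\calL_{p/q})$ introduces no extra interior critical points (the boundary curves run monotonically in $h$ away from the two ends). With this setup the interfaces between consecutive blocks are regular level sets, each a disjoint union of circles and properly embedded arcs, so they contribute no critical points.

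Next I would identify the two types of critical points. Each \emph{saddle} of a basic branched surface, in the sense recorded in Figure~\ref{fig:branchtypes}, is an index-$1$ critical point of $h$ and contributes $-1$ to $\chi(S)$; by construction the number of saddles occurring in the block associated to the $i$th edge of $\gamma$ and weighted by $\alpha,\beta$ is exactly $s_i(\alpha,\beta)$, so the total negative contribution is $\sum_i s_i(\alpha,\beta)$. The remaining critical points are the \emph{caps}: the index-$0$ minima and index-$2$ maxima where a level circle is born or dies. Since within each block the only critical points are saddles, the caps occur only at the two ends of the edge path, where the two-bridge structure closes off the strands. Morse theory for a surface with boundary then gives $\chi(S) = (\#\text{caps}) - \sum_i s_i(\alpha,\beta)$.

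The heart of the argument is to show that the number of caps equals $\alpha+\beta$ when $\alpha\ge\beta$. For this I would track the weights through the edge path: at a generic level the carried surface meets the cross-sectional four-punctured sphere in a weighted $1$--manifold whose total weight, dictated by the switch conditions at the branch locus and the normalization $t=\alpha/\beta$, is $\alpha+\beta$; capping off at $1/0$ and at $p/q$ then produces exactly $\alpha+\beta$ disk caps in total. Combining this with the saddle count yields $\chi(S)=(\alpha+\beta)-\sum_i s_i(\alpha,\beta)$. Finally, the case $\alpha<\beta$ follows verbatim after the $180^\circ$ rotation of the blocks and the exchange of $\alpha$ and $\beta$ described in Figure~\ref{fig:branchtypes}, which swaps the roles of the two components of $\calL_{p/q}$ without changing $\chi(S)$.

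The main obstacle I anticipate is the bookkeeping in the cap count: one must verify that no caps are hidden inside the interior blocks and that the weight analysis at the two vertices really totals $\alpha+\beta$ rather than being redistributed by the auxiliary integral parameter $n$ attached to the $\Sigma_A$ and $\Sigma_D$ blocks. This amounts to checking the normal-curve combinatorics in each of the four block types against Figure~\ref{fig:branchtypes} directly; the computation is routine, but it is precisely where all of the weighted combinatorial content of the lemma resides.
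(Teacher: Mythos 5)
Your proposal is correct and follows essentially the same route as the paper: a Morse-theoretic count for the height function of the stacked blocks, with each saddle contributing $-1$ and the contributions at the two ends of the edge path totalling $+(\alpha+\beta)$. The only (immaterial) difference is bookkeeping at the ends: the paper records $\alpha+\beta$ half-maxima and $\alpha+\beta$ half-minima on $\partial S$ at the extrema of the link, each worth $+\tfrac{1}{2}$, rather than your $\alpha+\beta$ interior caps each worth $+1$; both conventions yield $\chi(S)=(\alpha+\beta)-\sum_i s_i(\alpha,\beta)$.
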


\begin{proof}
 As shown in Figure~\ref{fig:branchtypes}, when $\alpha \geq \beta$, 
 each basic weighted branched surface of type $\Sigma_A, \Sigma_B, \Sigma_C, \Sigma_D$ carries $\beta$, $\tfrac{\alpha-\beta}{2}$, $\beta$, $\alpha-\beta$ saddles respectively and a number of vertical disks that together meet each of the upper and lower levels in a total of $\alpha+\beta$ arcs.  
 Since the weighted branched surfaces are assembled from a stack of copies of these basic weighted branched surfaces, the height function induces a Morse function on a carried surface $S$ whose singularities correspond to the saddles on the interior of the surface and the $\alpha+\beta$ half-maxima and $\alpha+\beta$ half-minima on the boundary of the surface at the extrema of the link. 
 Hence $\chi(S)$ is calculated as $\alpha+\beta$ minus the total number of saddles.  
 When $\alpha<\beta$, the blocks are rotated and $\alpha$ and $\beta$ are swapped.
 \end{proof}

\subsection{Boundaries slopes and count of boundary components}
Note that surfaces carried by these branched surfaces are given by non-negative integral weights $\alpha$ and $\beta$ (with the auxiliary integral parameters $n$ as needed), 
and these weights indicate the algebraic (and geometric) intersection numbers of the surface with the meridians $\mu_1, \mu_2$ of the two components of $\calL_{p/q}$.

Hoste and Shanahan use a certain blackboard framing $\lambda_1, \lambda_2$ of the two components of $\calL_{p/q}$ to further keep track of how the branched surfaces associated to minimal edge paths in $D_t$ intersect this framing.  
They then determine how to correct this framing to the canonical framings $\lambda_1^0, \lambda_2^0$ of the individual unknot components of the two-bridge link. 
From this, one then obtains the boundary slopes of the carried surfaces in terms of the canonical framings of the components.

Furthermore, by a calculation in the homology of a torus, the greatest common divisor ($\gcd$) 
of the algebraic intersection numbers of the boundary of a surface with the meridian and longitudinal framing of a component of $\calL_{p/q}$ produces the number of boundary components of the surface meeting that component of $\calL_{p/q}$.

\subsection{Applying the Algorithm to the Whitehead Link-- $2$--bridge link $[2, 2, -2]$}
\label{sec:whiteheadalgorithm}
As a warm-up example, in this subsection, 
we apply the algorithm to the Whitehead Link, which is the $2$--bridge link $[2, 2, -2]$.
Figure~\ref{fig:DtDiagram3-8} shows the portions of the diagrams $D_0=D_\infty$, $D_t$, and $D_1$ that carry the minimal edge paths from $1/0$ to $3/8$. 
Table~\ref{table:3/8paths} lists these minimal edge paths with their names as given in each \cite{HS} and \cite{FH}, 
the branch pattern of the induced branched surface (i.e.\ the sequence of edge labels), 
and the Euler characteristic of the carried surface corresponding to weights $\alpha\geq \beta$.  
Table~\ref{table:3/8slopes} lists for each of these paths the boundary slopes of the carried surfaces relative to the canonical meridian-longitude framings of the two unknot components of the two-bridge links and the count of the number of  boundary components on each link component.  
These are also calculated from the given preliminary data of algebraic intersections of the boundary components with the meridians and blackboard framed longitudes and the boundary slopes in terms of the blackboard framing; 
refer to \cite{HS} for details.  
Note that for each of the paths $\gamma_i$, $i \in \{1,2,3,5,6\}$, 
when $\beta=0$ so that $\alpha/\beta=\infty$ the associated essential surface is disjoint from the 2nd link component. 
Table~\ref{table:3/8slopes} summarizes the relevant data. When $\alpha < \beta$ we may continue to use the two tables, 
but with $\alpha$ and $\beta$ swapped and with the two link components swapped.

\begin{figure}
	\centering
	\includegraphics[width=5in]{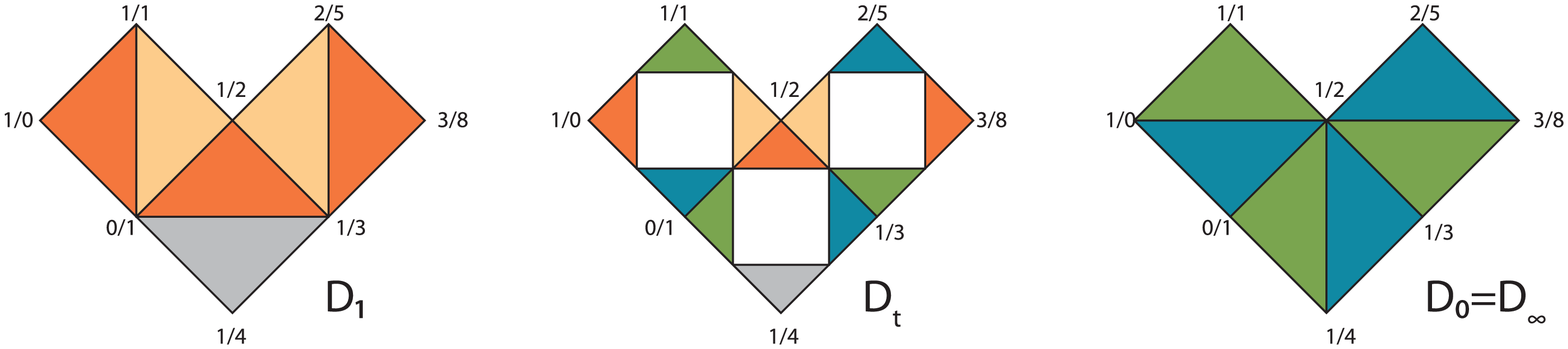}
	\caption{The diagrams $D_1$, $D_t$, and $D_0=D_\infty$ that carry the minimal edge paths from $\frac{1}{0}$ to $\frac{3}{8}$.}
	\label{fig:DtDiagram3-8}
\end{figure}

\begin{table*}
	\centering
	\caption{
	The minimal edge paths in $D_t$ from $0/1$ to $3/8$, the branch patterns of their supporting branched surfaces, and the Euler characteristics for the surfaces when $t = \alpha/\beta > 1$ are shown. 
The HS path name is established in \cite[Table 2]{HS}.  
}
	\label{table:3/8paths}
	\begin{tabular}{@{}M{20mm}M{35mm}M{25mm}M{20mm}@{}}
		\toprule
		HS path    & path picture & branch pattern  & $\chi$ \\
		\midrule
		$\gamma_1$  & \includegraphics[width=1in]{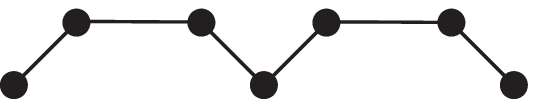} & $ADAADA$ & $-\alpha - \beta$ \\ \addlinespace[0.5em]
		$\gamma_2$  & \includegraphics[width=1in]{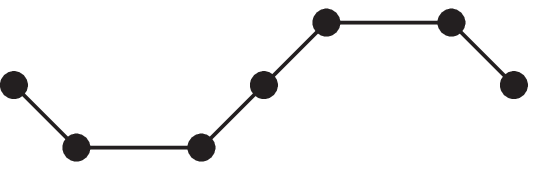} & $ADAADA$ & $-\alpha - \beta$ \\ \addlinespace[0.5em]
		$\gamma_3$  & \includegraphics[width=1in]{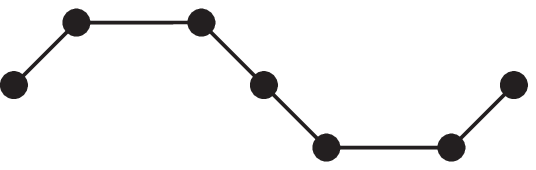} & $ADAADA$ & $-\alpha - \beta$ \\ \addlinespace[0.5em]
		$\gamma_5$  & \includegraphics[width=1in]{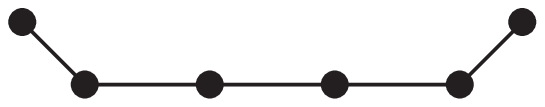} & $ADCDA$  & $-\alpha$ \\ \addlinespace[0.5em]
		$\gamma_6$ &  \includegraphics[width=1in]{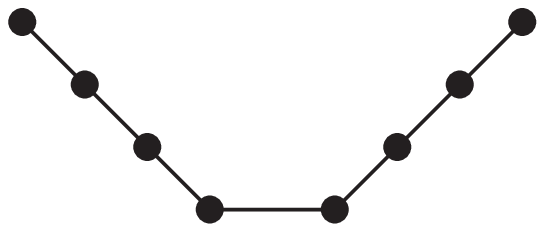} & $ABBCBBA$ & $-\alpha$ \\ \addlinespace[0.5em]
		\bottomrule
	\end{tabular}
\end{table*}

\begin{table}
	\centering
	\caption{The two boundary slopes and number of boundary components of each surface carried by a branched surface associated to a minimal edge path from $0/1$ to $3/8$ for the first and second link component are presented as a pair.  
The table shows the case $\alpha > \beta > 0$ so that $t = \alpha/\beta \in (1, \infty)$. 
For $t=\infty$ when $\alpha>\beta = 0$, 
the surface is disjoint from the second component so the second coordinate in the last three columns are $\emptyset, \emptyset, 0$ respectively. 
For $t \in [0,1)$, 
apply the homeomorphism of the two-bridge link that swaps its two components, 
i.e.\ exchange coordinates and swap $\alpha$ and $\beta$.
Actually there is also a path $\gamma'_5$ for $t = \alpha / \beta = 1$, but we omit it.}
	\label{table:3/8slopes}
	{\small
	\begin{tabular}{@{}lllll@{}}
		\toprule
		 HS & alg.\ int.\ with & slopes with  &  slopes with & number of \\
		path & $(\lambda_1, \mu_1, \lambda_2, \mu_2)$ &  blackboard framing  & canonical framing & boundary components  \\
		\midrule
		$\gamma_1$ &  $(\alpha+2 \beta, \alpha, 2\alpha+\beta, \beta)$
		& $(1+2 \tfrac{\beta}{\alpha}, 2\tfrac{\alpha}{\beta}+1)$ & $(2 \tfrac{\beta}{\alpha}, 2\tfrac{\alpha}{\beta})$ & $(\gcd(2\beta,\alpha), \gcd(2\alpha,\beta))$ 
		\\
		$\gamma_2$ &  $(\alpha, \alpha, \beta, \beta)$ 
		& $(1,1)$ & $(0,0)$ & $(\alpha,\beta)$ 
		\\
		$\gamma_3$ &  $(\alpha, \alpha, \beta, \beta)$ 
		& $(1,1)$ & $(0,0)$ & $(\alpha,\beta)$ 
		\\
		$\gamma_5$ &  $(\alpha-2 \beta, \alpha, -2\alpha-\beta, \beta)$  
		& $(1-2 \tfrac{\beta}{\alpha}, -2\tfrac{\alpha}{\beta}-1)$ & $(-2 \tfrac{\beta}{\alpha}, -2\tfrac{\alpha}{\beta}-2)$ & $(\gcd(2\beta,\alpha), \gcd(2\alpha,\beta))$ 
		\\
		$\gamma_6$ &  $(-3\alpha, \alpha, -\beta, \beta)$ 
		&   $(-3,-1)$& $(-4,-2)$ & $(\alpha,\beta)$ 
		\\
		\bottomrule
	\end{tabular}
	}
\end{table}

\begin{table*}
	\centering
	\caption{
	Summary of data in terms of canonical framing.}
	\label{table:3/8summary}
	{\small
	\begin{tabular}{@{}llllllll@{}}
		\toprule
	HS   & branch   & $\chi$ & \multicolumn{2}{l}{boundary slopes} & \phantom{x}&  \multicolumn{2}{l}{number of boundary components} \\
		path	&	pattern		&		& $_{\beta>0}$ & $_{\beta=0}$		&	& $_{\beta>0}$ & $_{\beta=0}$\\
	\midrule
	$\gamma_1$ &$ADAADA$ & $-\alpha - \beta$ & $(2 \tfrac{\beta}{\alpha}, 2\tfrac{\alpha}{\beta})$ & $(2 \tfrac{\beta}{\alpha}, \emptyset)$ && $(\gcd(2\beta, \alpha), \gcd(2\alpha, \beta))$ & $(\gcd(2\beta, \alpha), 0)$ \\
	$\gamma_2$ &  $ADAADA$ & $-\alpha - \beta$ & $(0,0)$ & $(0,\emptyset)$& & $(\alpha,\beta)$& $(\alpha,0)$\\
	$\gamma_3$ &  $ADAADA$ & $-\alpha - \beta$ & $(0,0)$ & $(0,\emptyset)$& & $(\alpha,\beta)$& $(\alpha,0)$\\
	$\gamma_5$ &  $ADCDA$  & $-\alpha$ & $(-2 \tfrac{\beta}{\alpha}, -2\tfrac{\alpha}{\beta}-2)$ & $(-2 \tfrac{\beta}{\alpha}, \emptyset)$&& $(\gcd(2\beta, \alpha), \gcd(2\alpha, \beta))$& $(\gcd(2\beta, \alpha), 0)$ \\
	$\gamma_6$ & $ABBCBBA$ & $-\alpha$ & $(-4,-2)$ & $(-4, \emptyset)$ && $(\alpha,\beta)$& $(\alpha,0)$\\
	\bottomrule
	\end{tabular}
	}
\end{table*}

\subsection{Applying the Algorithm to a generalized Whitehead Link -- $2$--bridge link $[2, 2\omega, -2]$}
\label{sec:genaralizedwhiteheadalgorithm}
Let us apply the algorithm to a generalized Whitehead Link, which is a $2$--bridge link $[2, 2\omega, -2]$. 
Recall that a $2$--bridge link $[2, 2\omega, -2]$ is expressed as $\calL_{(4\omega -1)/8\omega}$. 
 
Figure~\ref{fig:DtDiagramL4k-1--8k} shows the portions of the diagrams $D_0=D_\infty$, $D_t$, 
and $D_1$ that carry the minimal edge paths from $1/0$ to 
$(4\omega -1)/8\omega = (4 k-1)/8k$ 
with $k = \omega \ge 1$.  
When $\omega < 0$, 
putting $k = -\omega \ge 1$, 
we have $(4\omega -1)/8\omega = (-4 k-1)/(-8k) = (4k+1)/8k$ with $k = -\omega \ge 1$. 
Figure~\ref{fig:DtL4k+1--8k_omega_minus} shows the portions of the diagrams $D_0=D_\infty$, $D_t$, 
and $D_1$ that carry the minimal edge paths from $1/0$ to $(4 k+1)/8k$ with $k \ge 1$.

We obtain Tables ~\ref{table:4k-1by8kpaths}, \ref{table:4k-1by8kslopes} and \ref{table:4k-1by8ksummary} corresponding to Tables~\ref{table:3/8paths}, \ref{table:3/8slopes} and \ref{table:3/8summary}. 
Note that for each of the paths $\gamma^{\pm}_i$, $i \in \{1,2,3,4,5,6\}$, 
when $\beta=0$ so that $\alpha/\beta=\infty$ the associated surface is disjoint from the second link component.  
A path $\gamma^{+}_i$ is a minimal path from $\frac{1}{0}$ to $\frac{4k-1}{8k}$ (corresponding to the case where $\omega > 0$), 
and a path $\gamma^{-}_i$ is a minimal path from $\frac{1}{0}$ to $\frac{4k+1}{8k}$ (corresponding to the case where $\omega < 0$). 
Indeed, for $i \in \{1,2,3,4,5\}$, 
when $\alpha=1$ and $\beta=0$, 
the associated surface is a once-punctured torus Seifert surface for the first link component.
For $i=6$, when $\alpha=2$ and $\beta=0$, 
the associated surface is a twice punctured torus disjoint from the second component.

\begin{figure}[h]
	\centering
	\includegraphics[width=6in]{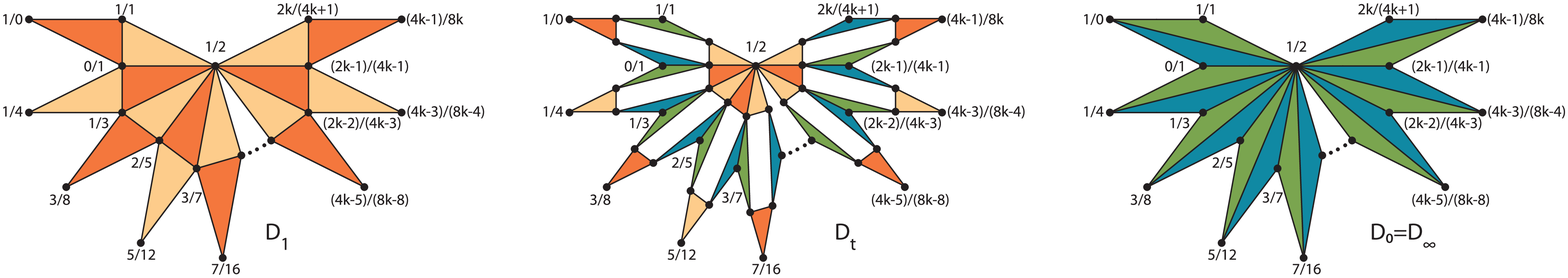}
	\caption{The diagrams $D_1$, $D_t$, and $D_0=D_\infty$ that carry the minimal paths from $\frac{1}{0}$ to $\frac{4k-1}{8k}$ with $k \ge 1$.}
	\label{fig:DtDiagramL4k-1--8k}
\end{figure}

\begin{figure}[h]
	\centering
	\includegraphics[width=6in]{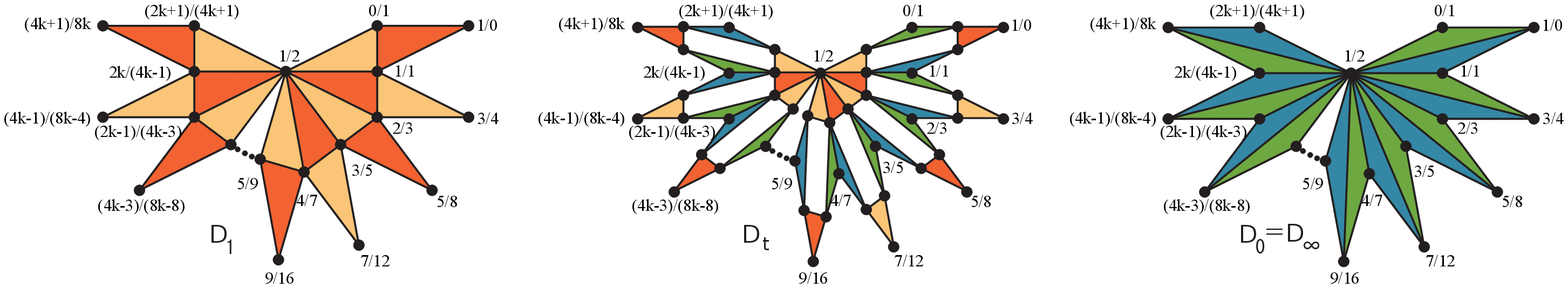}
	\caption{The diagrams $D_1$, $D_t$, and $D_0=D_\infty$ that carry the minimal paths from $\frac{1}{0}$ to $\frac{4k+1}{8k}$ with $k \ge 1$.}
	\label{fig:DtL4k+1--8k_omega_minus}
\end{figure}

\begin{table*}
	\centering
	\caption{
		The minimal edge paths $\gamma^{+}_i$ in $D_t$ from $1/0$ to $(4k-1)/8k$ and 
		the minimal edge paths $\gamma^{-}_i$ in $D_t$ from $1/0$ to $(4k+1)/8k$ for positive integers $k$,
		the branch patterns of their supporting branched surfaces, and the Euler characteristics for the surfaces when $t = \alpha/\beta > 1$ are shown. 
		There is also a path $\gamma'^{\pm 5}$ for $t = \alpha / \beta = 1$, but we omit it.
		The HS path name is established in \cite[Table 2]{HS}.  
		(Compare with Table~\ref{table:3/8paths} for $L_{3/8}$.)
	}
	\label{table:4k-1by8kpaths}
	\begin{tabular}{@{}M{10mm}M{35mm}M{25mm}M{45mm}@{}}
		\toprule
		HS path    & path picture & branch pattern  & $\chi$ \\
		\midrule
		$\gamma^{\pm}_1$  & \includegraphics[width=1in]{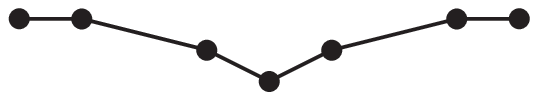} & $ADAADA$ & $-\alpha - \beta$ \\ \addlinespace[0.5em]
		$\gamma^{\pm}_2$  & \includegraphics[width=1in]{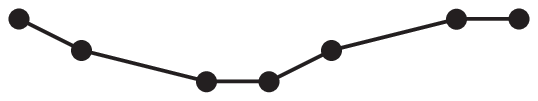} & $ADAADA$ & $-\alpha - \beta$ \\ \addlinespace[0.5em]
		$\gamma^{\pm}_3$  & \includegraphics[width=1in]{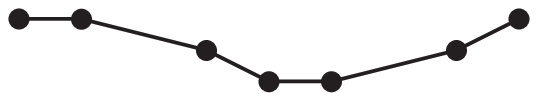} & $ADAADA$ & $-\alpha - \beta$ \\ \addlinespace[0.5em]
		$\gamma^{\pm}_4$  & \includegraphics[width=1in]{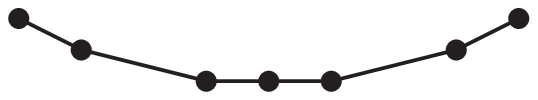} & $ADAADA$ & $-\alpha - \beta$ \\ \addlinespace[0.5em]		
		$\gamma^{\pm}_5$  & \includegraphics[width=1in]{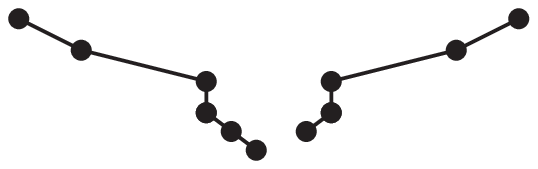} & $ADC^{2k-1}DA$  & $-\alpha+2(1-k)\beta$ \\ \addlinespace[0.5em]
		$\gamma^{\pm}_6$ &  \includegraphics[width=1in]{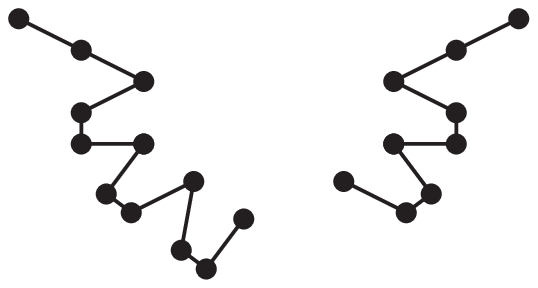} & $AB(BCB)^{2k-1}BA$ & $(1-2k)\alpha$ \\ 
		\bottomrule
	\end{tabular}
\end{table*}

\begin{table}
	\centering
	\caption{
	The two boundary slopes and number of boundary components of each surface carried by a branched surface associated to a minimal edge path from $0/1$ to 
	$(4k-1)/8k$  or $(4k+1)/8k$ for the first and second link component are presented as a pair.  
	The table shows the case $\alpha > \beta > 0$ so that $t = \alpha/\beta \in (1, \infty)$.
	For $t=\infty$ when $\alpha>\beta = 0$, 
	the surface is disjoint from the second component so the second coordinate in the last two columns are $\emptyset$ and $0$ respectively.
	 For $t \in (0,1]$, apply the homeomorphism of the two-bridge link that swaps its two components, 
	 i.e.\ exchange coordinates and swap $\alpha$ and $\beta$.
	(Compare with Table~\ref{table:3/8slopes} for $L_{3/8}$.)}
	\label{table:4k-1by8kslopes}
	{\small
	\begin{tabular}{@{}llll@{}}
		\toprule
		HS & alg.\ int.\ with &  slopes with & number of \\
		path & $(\lambda_1, \mu_1, \lambda_2, \mu_2)$ & canonical framing & boundary components  \\
		\midrule
		$\gamma^{\pm}_1$ &  $(\pm \alpha \pm 2 \beta, \alpha, \mp 2\alpha \mp \beta, \beta)$
		& $(\pm 2 \tfrac{\beta}{\alpha}, \pm 2\tfrac{\alpha}{\beta})$ & $(\gcd(2\beta,\alpha), \gcd(2\alpha,\beta))$ 
		\\
		$\gamma^{\pm}_2$ &  $(\pm \alpha, \alpha, \pm \beta, \beta)$ 
		& $(0,0)$ & $(\alpha,\beta)$ 
		\\
		$\gamma^{\pm}_3$ &  $(\pm \alpha, \alpha, \pm \beta, \beta)$ 
		& $(0,0)$ & $(\alpha,\beta)$ 
		\\
		$\gamma^{\pm}_4$ &  $(\pm \alpha \mp 2\beta, \alpha, \mp 2\alpha \pm \beta, \beta)$ 
		& $(\mp 2 \tfrac{\beta}{\alpha}, \mp 2\tfrac{\alpha}{\beta})$ & $(\gcd(2\beta,\alpha), \gcd(2\alpha,\beta))$  
		\\
		$\gamma^{\pm}_5$ &  $(\pm \alpha \mp 2 \beta, \alpha, \mp 2\alpha\pm (3-4k)\beta, \beta)$  
		& $(\mp 2 \tfrac{\beta}{\alpha}, \mp 2\tfrac{\alpha}{\beta}\pm 2 \mp 4k)$ & $(\gcd(2\beta,\alpha), \gcd(2\alpha,\beta))$ 
		\\
		$\gamma^{\pm}_6$ &  $(\pm (1-4k)\alpha, \alpha, \mp \beta, \beta)$ 
		& $(\mp 4k,\mp 2)$ & $(\alpha,\beta)$ 
		\\
		\bottomrule
	\end{tabular}
	}
\end{table}

\begin{table*}
	\centering
	\caption{
	Summary of data in terms of canonical framing.  
	(Compare with Table~\ref{table:3/8summary} for $L_{3/8}$.)}  
	\label{table:4k-1by8ksummary}
	{\small
	\begin{tabular}{@{}lllllll@{}}
		\toprule
	HS      & $\chi$ & \multicolumn{2}{l}{boundary slopes} & \phantom{x}&  \multicolumn{2}{l}{number of boundary components} \\
		path			&		& $_{\beta>0}$ & $_{\beta=0}$		&	& $_{\beta>0}$ & $_{\beta=0}$\\
	\midrule
	$\gamma^{\pm}_1$ & $-\alpha - \beta$ & $(\pm 2 \tfrac{\beta}{\alpha}, \pm 2\tfrac{\alpha}{\beta})$& $(\pm 2 \tfrac{\beta}{\alpha}, \emptyset)$& & $(\gcd(2\beta, \alpha), \gcd(2\alpha, \beta))$ & $(\gcd(2\beta, \alpha), 0)$ \\
	$\gamma^{\pm}_2$  & $-\alpha - \beta$ & $(0,0)$& $(0,\emptyset)$& & $(\alpha,\beta)$ & $(\alpha,0)$\\
	$\gamma^{\pm}_3$  & $-\alpha - \beta$ & $(0,0)$ & $(0,\emptyset)$ && $(\alpha,\beta)$ & $(\alpha,0)$\\
	$\gamma^{\pm}_4$  & $-\alpha - \beta$ & $(\mp 2 \tfrac{\beta}{\alpha}, \mp 2\tfrac{\alpha}{\beta})$ & $(\mp 2 \tfrac{\beta}{\alpha},\emptyset)$ && $(\gcd(2\beta,\alpha), \gcd(2\alpha,\beta))$& $(\gcd(2\beta,\alpha), 0)$ \\
	$\gamma^{\pm}_5$  & $-\alpha + 2(1-k)\beta$ & $(\mp 2 \tfrac{\beta}{\alpha}, \mp 2\tfrac{\alpha}{\beta} \pm 2 \mp 4k)$ & $(\mp 2 \tfrac{\beta}{\alpha}, \emptyset)$ && $(\gcd(2\beta,\alpha), \gcd(2\alpha,\beta))$& $(\gcd(2\beta,\alpha), 0)$ \\
	$\gamma^{\pm}_6$ & $(1-2k)\alpha$ & $(\mp 4k, \mp 2)$ & $(\mp 4k,\emptyset)$ && $(\alpha,\beta)$& $(\alpha,0)$ \\
	\bottomrule
	\end{tabular}
	}
\end{table*}

\pagebreak
\section{Slope conjecture for twisted generalized Whitehead doubles}
\label{slope_cpnjecture_Wmt}

In this section we prove Theorem~\ref{bothslopeconjecturesWmt}(1), 
which we state here again for convenience. 

\begin{thm_slope conjecture Wmt}
Let $K$ be a knot in $S^3$ and $d_+[J_{K,n}(q)]$ is the quadratic quasi-polynomial 
$\delta_K(n)=a(n) n^2 +b(n) n+c(n)$ for all $n \ge 0$.
We put $a_1:=a(1)$, $b_1:=b(1)$, and $c_1:=c(1)$. 
We assume that the period of $d_+[J_{K,n}(q)]=\delta_K(n)$ is less than or equal to  $2$ and that 
$b_1 \le 0$.
Assume further that if $b_1 = 0$, 
then $a_1 \ne \frac{\tau}{4}$. 
If $K$ satisfies the Slope Conjecture, 
then all of its twisted generalized Whitehead doubles also satisfy the Slope Conjecture.
\end{thm_slope conjecture Wmt}

\begin{proof}
Assume first that $\omega > 0$.
It follows from Proposition~\ref{maxdeg_KT_Wmt-allw} that 
if $r_W \in js_{W_{\omega}^{\tau}(K)}$, 
then there exists $r_K \in js_K$ such that 
\begin{itemize}
\item
$r_W = 4 r_K - 4\tau$ if $a_1 > \frac{\tau}{4}$, or
\item
$r_W  = 0$ if $a_1 \le \frac{\tau}{4}$.   
\end{itemize}

Suppose next that $\omega < 0$.
Then Proposition~\ref{maxdeg_KT_Wmt-allw} shows that 
if $r_W \in js_{W_{\omega}^{\tau}(K)}$, 
then there exists $r_K \in js_K$ such that 
\begin{itemize}
\item
$r_W = 4 r_K -4 \omega -2 -4 \tau$ if $a_1 > \frac{\tau}{4}+\frac{1}{8}$, or
\item
$r_W  = -4 \omega$ if $a_1  \le  \frac{\tau}{4}+\frac{1}{8}$.
\end{itemize}

Let us find essential surfaces in $E(W_{\omega}^{\tau}(K))$ whose boundary slopes are these Jones slopes. 

Recall that $k_1 \cup k_2$ is a $2$--bridge link expressed as $[2, 2\omega, -2]$ ($\omega \ne 0$) depicted in Figure~\ref{fig:2_2m_-2}; 
$(\mu_i, \lambda_i)$ denotes a preferred meridian-longitude pair of $k_i$. 
As in Figure~\ref{fig:2_2m_-2} take a solid torus $V = S^3 - \mathrm{int}N(k_1)$ which contains $k_2$ in its interior; 
let $(\mu_V, \lambda_V)$ be the standard meridian-longitude pair of $V \subset S^3$. 
Performing $-1/\tau$--surgery on $k_1$, 
equivalently $\tau$--twisting along $\mu_V$,  
we obtain $k_{\omega}^{\tau}$ which is the image of $k_2$ (Figure~\ref{fig:satellite}). 
Let $f$ be an orientation preserving embedding 
$f : V \to S^3$ which sends $V$ to $N(K)$ and 
$f(\mu_V) = \mu_K$ and $f(\lambda_V) = \lambda_K$, 
where 
$(\mu_K, \lambda_K)$ is a preferred meridian-longitude pair of $K$.  
Then $W_{\omega}^{\tau}(K) = f(k_{\omega}^{\tau})$ is the $\tau$--twisted generalized Whitehead double of $K$.  
Thus the exterior 
$E(W_{\omega}^{\tau}(K))$ is the union of $E(K)$ and $f(V - \mathrm{int}N(k_{\omega}^{\tau}))$. 
The boundary of $f(V - \mathrm{int}N(k_{\omega}^{\tau}))$ consists of two tori $T_W = \partial N(W_{\omega}^{\tau}(K))$ and $T_K = f(\partial V) = \partial E(K)$. 
Then $(f(\mu_2), f(\lambda_2))$ is a preferred meridian-longitude pair $(\mu_W, \lambda_W)$ of $W_{\omega}^{\tau}(K)$. 

\medskip 

\noindent
\textbf{1. Realization of the Jones slopes when $\omega > 0$.}

We divide into two cases depending upon $a_1 > \frac{\tau}{4}$ or $a_1 \le \frac{\tau}{4}$; 
see Proposition~\ref{maxdeg_KT_Wmt-allw}.  

\smallskip

\noindent 
\textbf{Case 1.\ $a_1 > \frac{\tau}{4}$.} 

Since $K$ satisfies the Slope Conjecture, 
the Jones slope $4a_1$ is realized by a boundary slope of an essential surface $S_K \subset E(K)$.  

\begin{claim}
\label{F_m^t_omega>0}
There exists an essential surface $F_{\omega}^{\tau}$ in $V - \mathrm{int}N(k_{\omega}^{\tau})$ 
such that each component of $F_{\omega}^{\tau} \cap \partial V$ has slope $4a_1$ and 
each component of $F_{\omega}^{\tau} \cap \partial N(k_{\omega}^{\tau})$ has $16a_1 - 4\tau$.  
\end{claim}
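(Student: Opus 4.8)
The plan is to build $F_\omega^\tau$ from the Floyd--Hatcher/Hoste--Shanahan catalogue of essential surfaces in the \emph{untwisted} link exterior and then push it through the $\tau$--twist using Lemma~\ref{twist}. The key preliminary is the framing dictionary on $\partial V = \partial N(k_1)$: since $V$ is unknotted, $\mu_1 = \lambda_V$ and $\lambda_1 = \mu_V$, so a curve of slope $s_1$ in the Hoste--Shanahan canonical framing $(\mu_1,\lambda_1)$ has slope $1/s_1$ in the framing $(\mu_V,\lambda_V)$. As $f(\mu_V)=\mu_K$ and $f(\lambda_V)=\lambda_K$, the curve $F_\omega^\tau\cap\partial V$ will glue to $\partial S_K$ exactly when its $(\mu_V,\lambda_V)$--slope is $4a_1$, which is the slope named in the Claim.

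First I would translate the target through the twist. By Lemma~\ref{twist} the $\tau$--twist adds $\tau$ to the $(\mu_V,\lambda_V)$--slope on $\partial V$ and fixes the slope on the inner component, so it suffices to find an essential surface $F$ in $V-\mathrm{int}N(k_2)$ with $F\cap\partial V$ of $(\mu_V,\lambda_V)$--slope $4a_1-\tau$ --- equivalently $(\mu_1,\lambda_1)$--slope $1/(4a_1-\tau)$ --- and $F\cap\partial N(k_2)$ of slope $16a_1-4\tau$; its image $F_\tau$ is then the desired $F_\omega^\tau$.

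Next I would read the surface off Table~\ref{table:4k-1by8ksummary}. The path $\gamma_1^+$ carries surfaces with $(k_1,k_2)$--slopes $(2\beta/\alpha,\,2\alpha/\beta)$; choosing positive integers $\alpha,\beta$ with $\alpha/\beta = 8a_1-2\tau$ (which is positive because $a_1>\tfrac{\tau}{4}$) gives exactly $(1/(4a_1-\tau),\,16a_1-4\tau)$. Letting $\alpha,\beta$ range over all positive integers, the second coordinate $2\alpha/\beta$ sweeps out all of $(0,\infty)$, so the component-swapping symmetry of the link (the $t<1$ case) supplies this slope pair with no need for any auxiliary inequality such as $a_1>\tfrac{\tau}{4}+\tfrac{1}{8}$. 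Since $\alpha,\beta>0$, the boundary-component counts $\gcd(2\beta,\alpha)$ and $\gcd(2\alpha,\beta)$ are both positive, so $F$ meets both $\partial V$ and $\partial N(k_2)$; and if the carried surface is non-orientable I would pass to the boundary of its twisted $I$--bundle (weights $2\alpha,2\beta$), which is orientable, essential, and carries the same slopes.

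Finally, the surfaces of Theorem~\ref{FloydHatcher} are incompressible and $\partial$--incompressible, and Lemma~\ref{twist} preserves essentiality, so $F_\omega^\tau=F_\tau$ is an essential surface with $F_\omega^\tau\cap\partial V$ of $(\mu_V,\lambda_V)$--slope $4a_1$ and $F_\omega^\tau\cap\partial N(k_\omega^\tau)$ of slope $16a_1-4\tau$, as demanded. I expect the only real difficulty to be the framing bookkeeping: correctly interchanging the Hoste--Shanahan framing on $k_1$ with the solid--torus framing $(\mu_V,\lambda_V)$, tracking the $+\tau$ shift of the twist, and verifying that the single ratio $\alpha/\beta = 8a_1-2\tau$ produces both required boundary slopes at once.
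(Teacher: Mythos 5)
Your proposal is correct and follows essentially the same route as the paper: it takes the surface $F_{\gamma^{+}_1}$ with slope pair $(2\beta/\alpha,\,2\alpha/\beta)$, uses the reciprocal coming from the framing exchange $\mu_1=\lambda_V$, $\lambda_1=\mu_V$ to set $\alpha/\beta=8a_1-2\tau>0$, and then applies Lemma~\ref{twist} to shift the $\partial V$ slope by $\tau$ while fixing the slope $16a_1-4\tau$ on the inner boundary. Your extra remarks on the $t<1$ symmetry and on passing to the orientable double of a non-orientable carried surface are consistent with how the paper handles those points in Subsection~\ref{algorithm}.
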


\begin{proof}
Let us take an essential surface $F_{\gamma^{+}_1}$ in 
$S^3 - \mathrm{int}N(k_1 \cup k_2) = V - \mathrm{int}N(k_2)$ associated to the minimal edge path $\gamma^{+}_1$ 
described in Section~\ref{essential_surfaces}. 
Then it has a pair of boundary slopes 
$(2\frac{\beta}{\alpha}, 2\frac{\alpha}{\beta})$ on $k_1, k_2$.
Then $F_{\gamma^{+}_1}$ has boundary slopes $\frac{2\beta}{\alpha}$ on $\partial N(k_1)$ and 
$\frac{2\alpha}{\beta}$ on $\partial N(k_2)$.  
Using the preferred meridian-longitude $(\mu_V, \lambda_V)$ of $V$ instead of $(\mu_1, \lambda_1)$ of $k_1$, 
$F_{\gamma^{+}_1} \cap \partial V$ has slope $\frac{\alpha}{2\beta}$. 
Choose $\alpha, \beta$ so that 
$\frac{\alpha}{2\beta} = 4a_1 - \tau > 0$.
Hence $\frac{\alpha}{\beta} = 8a_1 -2\tau > 0$.  
Then $F_{\gamma^{+}_1} \subset V - \mathrm{int}N(k_2)$ has boundary slope $16a_1 -4\tau$ on $\partial N(k_2)$ 
and $4a_1 - \tau$ on $\partial V$. 
Now we apply $\tau$--twisting along $\mu_V$ which changes 
$V - \mathrm{int}N(k_2)$ to $V - \mathrm{int}N(k_{\omega}^{\tau})$; 
we denote the image of $F_{\gamma^{+}_1}$ by $F_{\omega}^{\tau}$. 
By Lemma~\ref{twist} each component of $F_{\omega}^{\tau} \cap \partial V$ has slope $4a_1 (= 4a_1 - \tau + \tau)$,  
and each component of $F_{\omega}^{\tau} \cap \partial N(k_{\omega}^{\tau})$ has slope $16a_1 - 4\tau$ as desired. 
\end{proof}

\medskip

Let us take the image $f(F_{\omega}^{\tau})$ in $f(V - \mathrm{int}N(k_{\omega}^{\tau}))$, 
and denote it by $S_{\omega}^{\tau}$.  
Write $T_K = \partial E(K) = f(\partial V)$ and $T_W = \partial N(W_{\omega}^{\tau}(K)) = f(\partial N(k_{\omega}^{\tau}))$. 
By construction $S_{\omega}^{\tau}$ is essential in $f(V - \mathrm{int}N(k_{\omega}^{\tau}))$ and 
each component of $S_{\omega}^{\tau} \cap T_K$ has slope $4a_1$ and each component of $S_{\omega}^{\tau} \cap T_W$ has slope $16a_1 - 4\tau$. \par

To build a required essential surface $S \subset E(W_{\omega}^{\tau}(K))$ we take $m$ parallel copies 
$m S_{\omega}^{\tau}$ of the essential surface $S_{\omega}^{\tau}$ and $n$ parallel copies $n S_K$ of the essential surface $S_K$, 
and then glue them along their boundaries to obtain a connected surface 
$S = m S_{\omega}^{\tau} \cup n S_K$ in $E(W_{\omega}^{\tau}(K))$.  
Even when both $S_{\omega}^{\tau}$ and $S_K$ are orientable, 
$S$ may not be orientable. 
If $S$ is non-orientable, 
then consider a regular neighborhood of $S$ in $E(W_{\omega}^{\tau}(K))$, 
which is a twisted $I$--bundle of $S$ whose $\partial I$--subbundle is an orientable double cover of $S$. 
We use the same symbol $S$ to denote this $\partial I$--subbundle. 
Note that $S_K$ and $S_{\omega}^{\tau}$ are orientable,  
so $S \cap E(K)$ consists of parallel copies of $S_K$ and similarly 
$S \cap f(V - \mathrm{int}N(k_{\omega}^{\tau}))$ consists of parallel copies of $S_{\omega}^{\tau}$.  
Since $\partial E(K)$ is incompressible in $E(W_{\omega}^{\tau}(K))$ and 
$S_{\omega}^{\tau}$, $S_K$ are essential in $f(V - \mathrm{int}N(k_{\omega}^{\tau}))$ and $E(K)$ respectively, 
$S$ is incompressible in $E(W_{\omega}^{\tau}(K))$.  
If $S$ were boundary-compressible, 
then a component of $S$ would be a boundary-parallel annulus.
However, obviously each component of $S$ is not an annulus, 
and we have a contradiction. 
Hence $S$ is the desired essential surface. 

\smallskip

\noindent 
\textbf{Case 2. $a_1 \le \frac{\tau}{4}$.}

In this case Proposition~\ref{maxdeg_KT_Wmt-allw} shows that the Jones slope is $0$, 
and we explicitly give a desired essential surface. 
Let us take a once punctured torus $F$ bounded by $k_2$ which is contained in 
$V - \mathrm{int}N(k_2)$.
It has the boundary slope $0$ on $k_2$. 
Let $S$ be the image $f(F)$ in $E(W_{\omega}^{\tau}(K))$, 
which is a minimal genus Seifert surface of $W_{\omega}^{\tau}(K)$ and essential in its exterior. 
Thus the jones slope $0$ is a boundary slope of $W_{\omega}^{\tau}(K)$. 
\medskip

\noindent
\textbf{2. Realization of the Jones slopes when $\omega < 0$.}

We divide into two cases depending upon 
$a_1 > \frac{\tau}{4} + \frac{1}{8}$ or $a_1 \le \frac{\tau}{4} +\frac{1}{8}$. 

\smallskip

\noindent 
\textbf{Case 1.\ $a_1 > \frac{\tau}{4} + \frac{1}{8}$.} 

By the assumption, 
$K$ satisfies the Slope Conjecture, 
hence the Jones slope $4a_1$ of $K$ is realized by a boundary slope of an essential surface $S_K \subset E(K)$.

\begin{claim}
\label{F_m^{t}_omega<0}
There exists an essential surface $F_\omega^{\tau}$ in $V - \mathrm{int}N(k_{\omega}^{\tau})$ 
such that each component of $F_\omega^{\tau} \cap \partial V$ has slope $4a_1$ and 
each component of $F_\omega^{\tau} \cap \partial N(k_{\omega}^{\tau})$ has $16a_1 -4\tau - 2- 4\omega$.  
\end{claim}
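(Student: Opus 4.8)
The plan is to mirror the construction of Claim~\ref{F_m^t_omega>0} (the $\omega>0$ case), replacing the Floyd--Hatcher surface for the path $\gamma^+_1$ by the one carried by the minimal edge path $\gamma^-_5$ in the catalogue of Section~\ref{essential_surfaces}. Writing $k=-\omega\ge 1$, the relevant two-bridge link is $\calL_{(4k+1)/8k}=[2,2\omega,-2]$, and by Theorem~\ref{FloydHatcher} the surface $F_{\gamma^-_5}\subset V-\mathrm{int}\,N(k_2)$ associated to $\gamma^-_5$ (after passing to the orientable double bounding a tubular neighborhood, if it is non-orientable) is essential. Reading the lower signs for the path $\gamma^-_5$ in Table~\ref{table:4k-1by8kslopes} (equivalently Table~\ref{table:4k-1by8ksummary}), its boundary slopes on $(k_1,k_2)$ are $\left(2\tfrac{\beta}{\alpha},\,2\tfrac{\alpha}{\beta}-2+4k\right)$, so since $k=-\omega$ the slope on $k_2$ equals $2\tfrac{\alpha}{\beta}-2-4\omega$.

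Next I would convert the slope on $k_1$ into a slope on $\partial V$ via the meridian--longitude swap $(\mu_V,\lambda_V)=(\lambda_1,\mu_1)$, which sends the slope $2\tfrac{\beta}{\alpha}$ on $\partial N(k_1)$ to $\tfrac{\alpha}{2\beta}$ on $\partial V$. I then choose positive integers $\alpha>\beta$ with $\tfrac{\alpha}{2\beta}=4a_1-\tau$, equivalently $\tfrac{\alpha}{\beta}=8a_1-2\tau$. This is possible precisely because the hypothesis $a_1>\tfrac{\tau}{4}+\tfrac18$ makes $8a_1-2\tau>1$, so that $4a_1-\tau$ is a positive rational and $\alpha/\beta>1$, consistent with the case $t=\alpha/\beta>1$ recorded in the tables. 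The key point is that the two prescribed slopes are not independent: with the ratio $\alpha/\beta$ fixed by the $\partial V$ condition, the slope of $F_{\gamma^-_5}$ on $\partial N(k_2)$ is forced to be $2\tfrac{\alpha}{\beta}-2-4\omega=16a_1-4\tau-2-4\omega$, which is exactly why $\gamma^-_5$ is the correct path.

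Finally I would apply the $\tau$--twist along $\mu_V$ and invoke Lemma~\ref{twist}: the resulting surface $F_\omega^\tau\subset V-\mathrm{int}\,N(k_\omega^\tau)$ has $\partial V$--slope $(4a_1-\tau)+\tau=4a_1$, while its slope on $\partial N(k_\omega^\tau)$ is unchanged at $16a_1-4\tau-2-4\omega$, as required; essentiality is preserved because $\tau$--twisting is a homeomorphism of the relevant exteriors. The main obstacle—and really the only substantive point—is the bookkeeping: identifying $\gamma^-_5$ as the catalogued path whose slope pair, after the $k_1\!\leftrightarrow\! V$ meridian--longitude swap, matches both prescribed slopes, and correctly tracking the sign conventions relating $\omega<0$ to $k=-\omega$. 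Once the path is pinned down, the remaining steps are the same elementary slope computations used in Claim~\ref{F_m^t_omega>0}.
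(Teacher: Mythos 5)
Your proposal is correct and follows essentially the same route as the paper: it selects the surface $F_{\gamma^{-}_5}$ (with $k=-\omega$) from the Floyd--Hatcher catalogue, performs the same meridian--longitude swap to convert the slope $2\tfrac{\beta}{\alpha}$ on $\partial N(k_1)$ into $\tfrac{\alpha}{2\beta}$ on $\partial V$, chooses $\alpha,\beta$ with $\tfrac{\alpha}{2\beta}=4a_1-\tau$ (so $\tfrac{\alpha}{\beta}=8a_1-2\tau>1$), and concludes via the $\tau$--twist and Lemma~\ref{twist}. The sign bookkeeping for $\gamma^-_5$ and the handling of non-orientability match the paper's treatment.
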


\begin{proof}
Let us take an essential surface $F_{\gamma^{-}_5}$ with $k = -\omega$ in 
$S^3 - \mathrm{int}N(k_1 \cup k_2) = V - \mathrm{int}N(k_2)$ associated to the minimal edge path $\gamma^{-}_5$ 
described in Section~\ref{essential_surfaces}. 
Then it has a pair of boundary slopes 
$(2\frac{\beta}{\alpha}, 2\frac{\alpha}{\beta} - 2 + 4k)= (2\frac{\beta}{\alpha}, 2\frac{\alpha}{\beta} - 2 -4\omega)$ on $k_1, k_2$ 
($(\alpha, \beta) \le 2$). 
$F_{\gamma^{-}_5}$ has boundary slopes $\frac{2\beta}{\alpha}$ on $\partial N(k_1)$ and 
$2\frac{\alpha}{\beta} - 2 -4\omega$ on $\partial N(k_2)$.  
Using $(\mu_V, \lambda_V)$ instead of $(\mu_1, \lambda_1)$, 
$F_{\gamma^{-}_5} \cap \partial V$ has slope $\frac{\alpha}{2\beta}$. 
Choose $\alpha, \beta$ so that 
$\frac{\alpha}{2\beta} = 4a_1 -\tau$.
Hence $\frac{\alpha}{\beta} = 8a_1 - 2\tau > 1$.   
Then $F_{\gamma^{-}_5} \subset V - \mathrm{int}N(k_2)$ has boundary slope $16a_1 -4\tau - 2 -4\omega$ on $\partial N(k_2)$ and 
$4a_1 - \tau$ on $\partial V$. 
Now we apply $\tau$--twisting along $\mu_V$ which changes 
$V - \mathrm{int}N(k_2)$ to $V - \mathrm{int}N(k_{\omega}^{\tau})$; 
we denote the image of $F_{\gamma^{-}_5}$ by $F_\omega^{\tau}$. 
By Lemma~\ref{twist} each component of $F_\omega^{\tau} \cap \partial V$ has slope $4a_1 (= 4a_1 - \tau + \tau)$,  
and each component of $F_\omega^{\tau} \cap \partial N(k_{\omega}^{\tau})$ has slope $16a_1 - 4\tau - 2 -4\omega$ as desired. 
\end{proof}

Let us take $S_\omega^{\tau} = f(F_{\omega}^{\tau}) \subset f(V - \mathrm{int}N(k_{\omega}^{\tau}))$. 
Then it is essential in $f(V - \mathrm{int}N(k_{\omega}^{\tau}))$ and each component of $S_\omega^{\tau} \cap T_K$ has slope $4a_1$ and 
each component of $S_\omega^{\tau}  \cap T_W$ has slope $16a_1 - 4\tau - 2 -4\omega$. 
Take $m$ parallel copies $m S_\omega^{\tau}$ of $S_\omega^{\tau}$ and $n$ parallel copies $n S_K$ of $S_K$,  
and then glue them along their boundaries to obtain 
a connected surface $S = m S_\omega^{\tau} \cup n S_K$ in $E(W_{\omega}^{\tau}(K))$.  
If $S$ is non-orientable, 
then we re-take $S$ as the $\partial I$--subbundle of the regular neighborhood of $S$ in $E(W_{\omega}^{\tau}(K))$, 
which is an orientable double cover of $S$. 
Note that $S_K$ and $S_\omega^{\tau}$ are orientable, 
so $S \cap E(K)$ consists of parallel copies of $S_K$, 
and similarly $S \cap f(V - \mathrm{int}N(k_{\omega}^{\tau}))$ consists of parallel copies of $S_\omega^{\tau}$. 
Since $\partial E(K)$ is incompressible in $E(W_{\omega}^{\tau}(K))$, 
and $S_\omega^{\tau}$, $S_K$ are essential in $f(V - \mathrm{int}N(k_{\omega}^{\tau}))$ and $E(K)$ respectively, 
$S$ is incompressible in $E(W_{\omega}^{\tau}(K))$. 
Since it cannot be an annulus,  
$S$ is the desired essential surface. 

\smallskip

\noindent 
\textbf{Case 2. $a_1 \le \frac{\tau}{4} +\frac{1}{8}$.}

In this case Proposition~\ref{maxdeg_KT_Wmt-allw} shows that the Jones slope is $-4\omega$. 
Take an (orientable) essential surface $F_{\gamma^{-}_6}$ with $k = -\omega$, $\alpha = 2, \beta = 0$.  
(At the end of the argument in this case, 
we explain why we need to choose $\alpha = 2, \beta = 0$ rather than $\alpha = 1, \beta = 0$.) 
A symmetry of $k_1$ and $k_2$ induces an orientation preserving homeomorphism $\varphi$ of 
$S^3 - \mathrm{int}N(k_1 \cup k_2) = V - \mathrm{int}N(k_2)$  
which exchanges the components $\partial N(k_1) = \partial V$ and $\partial N(k_2)$. 
Let us set $F = \varphi(F_{\gamma^{-}_6}) \subset V- \mathrm{int}N(k_2)$. 
Then it follows from Table~\ref{table:4k-1by8ksummary} that 
$F$ has two boundary components with boundary slopes $(\emptyset, 4k) = (\emptyset, -4\omega)$.  
We apply $\tau$--twisting along $\mu_V$ which changes 
$V - \mathrm{int}N(k_2)$ to $V - \mathrm{int}N(k_{\omega}^{\tau})$, and
we denote the image of $F$ by $F_{\omega}^{\tau}$. 
By Lemma~\ref{twist}, $F_\omega^{\tau}$ has the boundary slope $-4\omega$ on $\partial N(k_{\omega}^{\tau})$. 
Hence, $S = f(F_\omega^{\tau}) \subset f(V - \mathrm{int}N(k_{\omega}^{\tau}))$ is an essential surface such that 
$S \cap T_K = \emptyset$ and each component of $S \cap T_W$ has slope $-4\omega$. 
Thus the Jones slope $-4\omega$ is a boundary slope of $W_{\omega}^{\tau}(K)$.  
Finally we explain why we choose $\alpha = 2, \beta = 0$. 
If we choose $\alpha = 1, \beta = 0$ in the above, 
then $F_{\gamma^{-}_6}$ has a single boundary component on $\partial N(k_1)$. 
Then $S = f(F_\omega^{\tau})$ has a single boundary component on $T_W$. 
If $F_{\gamma^{-}_6}$ (with $\alpha = 1, \beta = 0$), hence $S$, 
is orientable, then its boundary slope would be $0$. 
However $S \cap T_W$ has slope  $-4\omega$, a contradiction. 
Hence $F_{\gamma_6}$ with $\alpha = 1, \beta = 0$ is non-orientable. 
The surface corresponding to $\alpha = 2, \beta = 0$ is an orientable double cover of the surface corresponding to 
$\alpha = 1, \beta = 0$. 

This completes the proof of Theorem~\ref{bothslopeconjecturesWmt}(1). 
\end{proof}

\begin{example}
\label{twisting_vs_Jone surface}

The maximum degree of 
the colored Jones function of a torus knot $K = T_{p, q}$ with relatively prime integers $p,q>0$ is explicitly computed by \cite{Garoufalidis}: 
\[
d_+[J_{K,n}(q)] =\delta_K(n)=\frac {pq} 4 n^2-\frac {pq} 4-(1+(-1)^n)\frac {(p-2)(q-2)}{8}.
\]

Note that  $d_+[J_{K,n}(q)]$ is a quadratic quasi-polynomial for all integers $n$, 
and $a(n) = a_1 = \frac {pq}{4},\ b(n) = b_1 = 0$ and $c(n) = -\frac {pq} 4-(1+(-1)^n)\frac {(p-2)(q-2)}{8}$, 
in particular $c_1 = -\frac{pq}{4}$. 
Therefore, following Proposition~\ref{maxdeg_KT_Wmt-allw} we have: 

If $\omega > 0$, then 
\begin{equation*}
\delta_{W_{\omega}^{\tau}(K)}(n)=
\left\{ \begin{array}{ll} 
(pq - \tau) n^2 +(-pq + \tau - \frac{1}{2}) n + \frac 1 2 & ( \tau < pq ) \\
             -\frac{1}{2} n - \frac{pq}{4}+ \frac{1}{2}  &  ( \tau > pq ).
       \end{array} \right. 
\end{equation*}

If $\omega < 0$, then
\begin{equation*}
\delta_{W_{\omega}^{\tau}(K)}(n)=
\left\{ \begin{array}{ll} 
             (pq- \tau -\omega - \frac{1}{2} )n^2 +(- pq + \omega + \tau + 1)n- \frac{1}{2}
                   & (\tau < pq - \frac{1}{2}) \\
            - \omega n^2 + (\omega + \frac{1}{2})n -  \frac{1}{2} 
                    &  (\tau > pq - \frac{1}{2}).
       \end{array} \right.
\end{equation*}

This shows that 
a Jones surface of a twisted generalized Whitehead double of $K$ is of a different nature depending upon the twisting number $\tau$. 
\end{example}

\section{Strong slope conjecture for twisted generalized Whitehead doubles}
\label{strong Slope_Wmt}

This section is devoted to a proof of Theorem~\ref{bothslopeconjecturesWmt}(2), 
which we state again below.

\begin{thm_strong slope conjecture Wmt}
Let $K$ be a knot in $S^3$ for which $d_+[J_{K,n}(q)]$ is the quadratic quasi-polynomial 
$\delta_K(n)=a(n) n^2 +b(n) n+c(n)$ for all $n \ge 0$.
We put $a_1:=a(1)$, $b_1:=b(1)$, and $c_1:=c(1)$. 
We assume that the period of $d_+[J_{K,n}(q)]=\delta_K(n)$ is less than or equal to  $2$ and that 
$b_1 \le 0$.
Assume further that if $b_1 = 0$, 
then $a_1 \ne \frac{\tau}{4}$. 

If $K$ satisfies the Strong Slope Conjecture with $SS(1)$, 
then all of its twisted generalized Whitehead doubles 
satisfy the Strong Slope Conjecture.
\end{thm_strong slope conjecture Wmt}

\begin{remark}
\label{YSS=Fully}
Even when $\delta_K(n)$ has period $2$, 
$\delta_{W_{\omega}^{\tau}(K)}(n)$ is a usual polynomial rather than quasi-polynomial 
\(Remark~\ref{no_period}\). 
So the Strong Slope Conjecture with $SS(1)$ is equivalent to the Strong Slope Conjecture for $W_{\omega}^{\tau}(K)$ \(Remark~\ref{constant}\). 
\end{remark}

\begin{proof}
Write $\delta_{W_{\omega}^{\tau}(K)}(n)=a_W(n)n^2 + b_W(n)n + c_W(n)$.  
It follows from Remark~\ref{no_period} that coefficients of $\delta_{W_{\omega}^{\tau}(K)}(n)$ are constants 
and so we may write $a_W(n) = a_W$, $b_W(n) = b_W$, $c_W(n) = c_W$. 
Then we show that essential surfaces $S$ in $E(W_{\omega}^{\tau}(K))$ given in the proof of 
Theorem~\ref{bothslopeconjecturesWmt}(1) 
satisfy the condition of the Strong Slope Conjecture: 
\[ S\ \textrm{has boundary slope}\ p/q = 4a_W \quad and \quad \frac{\chi(S)}{|\partial S| q} = 2b_W.\]

\smallskip

It is convenient to note the following general fact.  

\begin{lemma}
\label{double}
Let $F$ be a properly embedded surface in a knot exterior $E$ such that a component of $\partial F$ has slope $p/q$. 
Let $\widetilde{F}$ be the frontier of a tubular neighborhood $N(F)$ in $E$, 
i.e.\  $\widetilde{F}$ is the $\partial I$--subbundle of an $I$--bundle over $F$. 
Then $\partial \widetilde{F}$ has slope $p/q$ and 
\[ \frac{\chi(\widetilde{F})}{|\partial \widetilde{F}| q} =  \frac{\chi(F)}{|\partial F| q}. 
\]
\end{lemma}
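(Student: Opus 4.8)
The plan is to treat the two possible local structures of the $I$--bundle $N(F)$ separately, according to whether $F$ is two--sided or one--sided in $E$, and in each case to compare Euler characteristics, boundary slopes, and boundary--component counts directly. In every case the point will be that passing from $F$ to $\widetilde{F}$ exactly doubles both $\chi$ and the number of boundary components while preserving slopes, so the ratio is unchanged.

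First I would dispose of the two--sided case. If $F$ is two--sided (in particular whenever $F$ is orientable), then the $I$--bundle is trivial, $N(F) \cong F \times I$, and $\widetilde{F} = F \times \{0,1\}$ is simply two parallel pushoffs of $F$. Each copy is isotopic to $F$ inside $E$, so each boundary component of $\widetilde{F}$ is parallel on $\partial E$ to a component of $\partial F$ and hence has the same slope; in particular a component parallel to the slope--$p/q$ component again has slope $p/q$. Counting gives $\chi(\widetilde{F}) = 2\chi(F)$ and $|\partial \widetilde{F}| = 2|\partial F|$, so the claimed identity is immediate.

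The substantive case is when $F$ is one--sided, which, since $E$ is orientable, is exactly the non--orientable case. Here $N(F)$ is the orientable twisted $I$--bundle over $F$ and $\widetilde{F}$ is its orientation double cover $\pi \colon \widetilde{F} \to F$, whence $\chi(\widetilde{F}) = 2\chi(F)$ as for any connected double cover. The key step, and the one I expect to be the main obstacle, is to show that this doubling persists on the boundary: that $|\partial \widetilde{F}| = 2|\partial F|$ and that each boundary circle keeps its slope. The crucial observation is that $w_1(F)$ restricts trivially to $\partial F$, because a collar $\partial F \times [0,1)$ of the boundary is orientable; equivalently, the normal line bundle of $F$ in $E$ is trivial along each boundary circle. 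Consequently the twisted $I$--bundle restricts to a \emph{product} $I$--bundle (an annulus, not a M\"obius band) over each component $c$ of $\partial F$, so $c$ lifts under $\pi$ to two disjoint circles, each mapping homeomorphically to $c$. Thus every boundary component doubles and is parallel to $c$ on $\partial E$, so it retains its slope; in particular the slope--$p/q$ component of $\partial F$ gives rise to slope--$p/q$ components of $\partial \widetilde{F}$.

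Combining the two cases, in all situations $\chi(\widetilde{F}) = 2\chi(F)$, $|\partial \widetilde{F}| = 2|\partial F|$, and every component of $\partial \widetilde{F}$ parallel to the distinguished component of $\partial F$ has slope $p/q$. The desired equality then follows from the computation
\[ \frac{\chi(\widetilde{F})}{|\partial \widetilde{F}|\,q} = \frac{2\chi(F)}{2|\partial F|\,q} = \frac{\chi(F)}{|\partial F|\,q}. \]
The only genuine care needed is the boundary analysis in the one--sided case; everything else is bookkeeping.
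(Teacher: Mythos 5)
Your proof is correct and follows essentially the same route as the paper: split into the orientable (product bundle) and non-orientable (orientation double cover) cases, observe that $\chi$ and the number of boundary components both double while slopes are preserved, and cancel the factor of $2$. Your justification that the twisted $I$--bundle is a product over each boundary circle (since $w_1(F)$ restricts trivially to the orientable collar of $\partial F$) is a detail the paper states without proof, so your write-up is if anything slightly more complete.
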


\begin{proof}
If $F$ is orientable, 
then $N(F) = F \times I$, 
whose frontier $\widetilde{F}$ consists of two copies of $F$. 
So each component of $\partial F$ has slope $p/q$ and 
$\displaystyle \frac{\chi(\widetilde{F})}{|\partial \widetilde{F}| q} =  \frac{2\chi(F)}{2|\partial F| q} 
=  \frac{\chi(F)}{|\partial F| q}$. 
Assume now that $F$ is non-orientable.
Then $\widetilde{F}$ is the orientable double cover of $F$, 
 and $\bdry \widetilde{F}$ consists of two parallel loops with slope $p/q$. 
Hence 
$\displaystyle \frac{\chi(\widetilde{F})}{|\partial \widetilde{F}| q} 
=  \frac{2\chi(F)}{2|\partial F| q} 
=  \frac{\chi(F)}{|\partial F| q}$. 
\end{proof}

\noindent
\textbf{1. Jones surfaces for $\omega > 0$.}

\noindent 
\textbf{Case 1-1.\ $a_1 > \frac{\tau}{4}$.}

Write $a_1= r/s$ where $r$ and $s$ are coprime integers and $s>0$.   
Then, as a ratio of coprime integers, the denominator of $4a_1$ is $s/\gcd(4,s)$.
Since $K$ satisfies the Strong Slope Conjecture with $SS(1)$, 
there is a properly embedded essential surface $S_K$ in the exterior of $K$ whose boundary slope is $4a_1$ and 
\[\frac{\chi(S_K)}{|\bdry S_K| \cdot \frac{s}{\gcd(4,s)}} = 2 b_1. \]
When addressing the Slope Conjecture for $W^\tau_\omega(K)$ in this case, 
we constructed a properly embedded essential surface $S = m S_K \cup n S^\tau_\omega$ 
in the exterior of $W^\tau_\omega(K)$ 
by joining $m$ copies of $S_K$ in $E(K)$ to $n$ copies of the surface $S^\tau_\omega$  in $V - N(k^\tau_\omega)$.  
This requires that 
\[ m |\bdry S_K| = n |\bdry S^\tau_\omega \cap T_K|. \]
 
The surface $S^\tau_\omega$ is identified with a surface of type $F_{\gamma_1}$ in the exterior of the 
$[2,2\omega,-2]$ two-bridge link, 
where $\frac{\alpha}{\beta} = 8a_1 - 2\tau = \frac{8r -2\tau s}{s} > 0$ so that $S^\tau_\omega$ has boundary slope $4a_1$ on $\bdry V$.  
 We choose $\beta = 2s$, $\alpha = 2(8r - 2\tau s)$ 
so that $F_{\gamma_1} = F_{\gamma_1, \alpha, \beta}$ is orientable; see Subsection~\ref{algorithm}. 
 
Then, using Table~\ref{table:4k-1by8ksummary}, 
we calculate the following:
\begin{itemize}
\item $\chi(S^\tau_\omega) 
= -\alpha -\beta = -2(8r - (2\tau-1)s)$,

\item  slope of $\bdry S^\tau_\omega$  on $T_W$ is 
$2\frac{\alpha}{\beta} 
= 2( \frac{8r-2 \tau s}{s}) 
= \frac{16r-4\tau s}{s}$,  

\item $|\bdry S^\tau_\omega \cap T_K| 
= \gcd(2\beta, \alpha) 
= \gcd(4s,2(8r-2\tau s)) 
= 4 \gcd(4,s)$, and

\item $|\bdry S^\tau_\omega \cap T_W| 
= \gcd(2\alpha, \beta) 
= \gcd(4(8r-2\tau s), 2s) 
= 2\gcd(16,s)$.
\end{itemize}

The boundary of $S$ consists of $n$ copies of the boundary of $S^\tau_\omega$  on $T_W$, 
so we have

\begin{itemize}
\item 
$|\bdry S| = n | \partial S^{\tau}_{\omega} \cap T_W | = 2n \gcd(16,s)$. 
\end{itemize}

Moreover, the boundary slope of $S$ is the  slope of $\bdry S^\tau_\omega$  on $T_W$, 
and so this has denominator $\frac{s}{\gcd(16r-4\tau s,\, s)}  = \frac{s}{\gcd(16,\,s)}$.  
We may now calculate
\begin{align*}
  \frac{\chi(S)}{|\bdry S| \cdot \frac{s}{\gcd(16,\,s)}} 
  &= \frac{m \chi(S_K) + n \chi(S^\tau_\omega)}{ 2n \gcd(16,s) \cdot \frac{s}{\gcd(16,\,s)}} \\
  &= \frac{ 2 b_1 m |\bdry S_K| \cdot \frac{s}{\gcd(4,\,s)} -2n(8r - (2\tau-1)s)}{2n s} \\
  &= \frac{ 8 b_1 n \gcd(4,s) \cdot \frac{s}{\gcd(4,\,s)} -2n(8r - (2\tau-1)s)}{2n s} \\
  &= \frac{ 8 b_1 n s -2n(8r - (2\tau-1)s)}{2n s} \\
    &= 4 b_1 -8r/s+(2\tau-1) = 2(-4a_1 + 2b_1 +  \tau - \frac{1}{2}) = 2b_W
\end{align*}
    as desired.

If the glued surface $S = m S_K \cup n S^\tau_\omega$ is non-orientable, 
then as in the proof of Theorem~\ref{bothslopeconjecturesWmt}(1), 
we replace $S$ by the frontier $\widetilde{S}$ of the tubular neighborhood of $S$, 
but Lemma~\ref{double} shows that $\widetilde{S}$ and $S$ has the same boundary slope and 
$\displaystyle \frac{\chi(\widetilde{S})}{|\bdry \widetilde{S}| \cdot \frac{s}{\gcd(16,\,s)}} 
= \frac{\chi(S)}{|\bdry S| \cdot \frac{s}{\gcd(16,\,s)}}$. 
Thus the essential surface $S$ or $\widetilde{S}$ (when $S$ is non-orientable) is the desired 
essential surface.

\smallskip

\noindent 
\textbf{Case 1-2.\ $a_1 \le \frac{\tau}{4}$.}

In this situation, 
$S$ is a minimal genus Seifert surface of $W_{\omega}^{\tau}(K)$, 
which is a once punctured torus. 
Hence 
\[ \frac{\chi(S)}{| \partial S | q} 
= \frac{\chi(S)}{| \partial S |}  = \frac{-1}{1} = -1 = 2 (-\frac{1}{2}) 
\in jx_{W_{\omega}^{\tau}(K)}.\]

\medskip

\noindent
\textbf{2. Jones surfaces for $\omega < 0$.}

\noindent 
\textbf{Case 2-1.\ $a_1  > \frac{\tau}{4} +\frac{1}{8}$.}

We follow the same argument in Case 1-1. 
Write $a_1= r/s$ for some coprime $r$ and $s > 0$.   
Then, as a ratio of coprime integers, the denominator of $4a_1$ is $s/ \gcd(4,s)$.
Since $K$ satisfies the Strong Slope Conjecture with $SS(1)$,  
there is a properly embedded essential surface $S_K$ in the exterior of $K$ whose boundary slope is $4a_1$ and 
\[
\frac{\chi(S_K)}{|\bdry S_K| \cdot \frac{s}{\gcd(4,\,s)}} = 2 b_1. 
\]
When addressing the Slope Conjecture for $W^\tau_\omega(K)$ in this case, 
we constructed a properly embedded essential surface $S = m S_K \cup n S^{\tau}_\omega$
in the exterior of $W^\tau_\omega(K)$ 
by joining $m$ copies of $S_K$ in $E(K)$ to $n$ copies of the surface $S^{\tau}_\omega$  in $V - N(k^\tau_\omega)$.  
This requires that 
\[ m |\bdry S_K| = n |\bdry S^{\tau}_\omega \cap T_K|. \]
 
The surface $S^{\tau}_\omega$ is identified with a surface of type $F_{\gamma^{-}_5}$ (with $k = -\omega$) in the exterior of the $[2,2\omega,-2]$ two-bridge link where 
$\frac{\alpha}{\beta} = 8a_1 - 2\tau = \frac{8r- 2\tau s}{s} > 1$ 
so that $S^{\tau}_\omega$ has boundary slope $4a_1$ on $\bdry V$.  
We choose $\beta = 2s$ and $\alpha = 2(8r - 2\tau s)$, 
so that $F_{\gamma^{-}_5} = F_{\gamma^{-}_5, \alpha, \beta}$ is orientable; see Subsection~\ref{algorithm}.
Then, using Table~\ref{table:4k-1by8ksummary}, we calculate the following:
\begin{itemize}
\item 
$\chi(S^{\tau}_\omega) 
= -\alpha + 2(1-k)\beta 
= -\alpha + 2(1+\omega)\beta 
= -16r +4(\tau +\omega +1)s$, 

\item  
slope of $\bdry S^{\tau}_\omega$  on $T_W$ is 
$2\frac{\alpha}{\beta} - 2 +4k 
= 2\frac{\alpha}{\beta} - 2 -4 \omega
= 2( \frac{8r-2 \tau s}{s}) -2-4\omega
= \frac{16r-4\tau s -2s -4\omega s}{s}$,  

\item 
$|\bdry S^{\tau}_\omega \cap T_K| 
= \gcd(2\beta, \alpha) 
= \gcd(4s, 16r-4\tau s) 
= 4 \gcd(s, 4)$, and

\item 
$|\bdry S^{\tau}_\omega \cap T_W| 
= \gcd(2\alpha, \beta) 
= \gcd(32r-8\tau s, 2s) 
= 2\gcd(16, s)$.
\end{itemize}
The boundary of $S$ consists of $n$ copies of the boundary of $S^{\tau}_\omega$ on $T_W$, 
so we have 
\begin{itemize}
\item 
$|\bdry S| = n | \partial S^{\tau}_{\omega} \cap T_W | = 2n \gcd(16,s)$. 
\end{itemize}
Moreover, the boundary slope of $S$ is the slope of $\bdry S^{\tau}_\omega$ on $T_W$, 
and so this has denominator 
$\frac{s}{\gcd(16r -4\tau s -2s-4\omega s,\, s)}  
= \frac{s}{\gcd(16,\,s)}$.  
We may now calculate
\begin{align*}
  \frac{\chi(S)}{|\bdry S| \cdot \frac{s}{\gcd(16,\,s)}} 
  &= \frac{m \chi(S_K) + n \chi(S^{\tau}_\omega)}{ 2n \gcd(16,s) \cdot \frac{s}{\gcd(16,\,s)}} \\
  &= \frac{ 2 b_1 m |\bdry S_K| \cdot \frac{s}{\gcd(4,\,s)} + n (-16r +4(\tau +\omega +1)s)}{2n s} \\
  &= \frac{ 8 b_1 n \gcd(4,s) \cdot \frac{s}{\gcd(4,\,s)} + n (-16r +4(\tau +\omega +1)s)}{2n s} \\
  &= \frac{ 8 b_1 n s+ n (-16r +4(\tau +\omega +1)s)}{2n s} \\
  &= 4 b_1 - 8r/s +2(\tau + \omega +1) \\
  &= -8a_1 +4 b_1 +2\tau + 2\omega +2 \\
  &= 2(-4a_1 + 2b_1 +  \tau + \omega +1) \\
  &= 2b_W
\end{align*}
as desired.

If the glued surface $S = m S_K \cup n S^{\tau}_\omega$ is non-orientable, 
then we replace $S$ by the frontier $\widetilde{S}$ of the tubular neighborhood of $S$. 
By Lemma~\ref{double} $\widetilde{S}$ and $S$ has the same boundary slope and 
$\displaystyle \frac{\chi(\widetilde{S})}{|\bdry \widetilde{S}| \cdot \frac{s}{\gcd(16,\,s)}} 
= \frac{\chi(S)}{|\bdry S| \cdot \frac{s}{\gcd(16,\,s)}}$. 
Thus the essential surface $S$ or $\widetilde{S}$ (when $S$ is non-orientable) is the desired 
essential surface.

\medskip 

\noindent 
\textbf{Case 2-2.\ $a_1 \le \frac{\tau}{4}+\frac{1}{8}$.}

In this case, 
the argument in Case $2$ in the proof of Theorem~\ref{bothslopeconjecturesWmt}(1) and 
Table~\ref{table:4k-1by8ksummary} show that 
$|\partial S| = 2$ and $\chi(S) = (1-2k)\cdot 2 = (1+2\omega)\cdot 2 = 4\omega + 2$.

Note that $\partial S \cap T_W$ consists of two simple closed curves each of which has slope $-4\omega$
($\partial S \cap T_K = \emptyset$). 
Hence 
\[
 \frac{\chi(S)}{| \partial S | q} = \frac{\chi(S)}{| \partial S |}  
= \frac{4\omega+2}{2} = 2\omega+1 = 2 (\omega + \frac{1}{2}) = 2 b_W.
\]
Thus $S$ satisfies the required condition. 

This completes the proof of Theorem~\ref{bothslopeconjecturesWmt}(2). 
\end{proof}

\medskip

\begin{proof}[Proof of Theorem~\ref{slopeconjectureWmtSignCond}]
Apply Proposition~\ref{maxdeg-signcond} and the argument in the proof of Theorem~\ref{bothslopeconjecturesWmt}.
\end{proof}

\section{Proof of Corollary~\ref{finite-sequence}}
\label{corollary}

The aim of this section is to prove Corollary~\ref{finite-sequence}. 

First we review notions of adequacy for knots.
Let $D$ be a diagram of a knot in $S^3$. 
A \textit{state} for $D$ is a choice of replacing every crossing of $D$ by \textit{$A$--resolution} or 
\textit{$B$--resolution} as in Figure~\ref{fig:resolution} with the dotted segment recording the location of 
the crossing before the replacement. 
The state $\sigma_{+}$ ( resp. $\sigma_{-}$ ) denotes the choice of $A$ (resp. $B$) 
-resolution at each crossing of $D$. 
Applying a state to $D$, we obtain a set of disjoint circles called \textit{state circles}. 
We form \textit{$\sigma$-state graph} $G_{\sigma}(D)$ for a state $\sigma$ 
by letting the resulting circles be vertices and the dotted segments be edges. 

\quad

\begin{figure}[!ht]
\includegraphics[width=0.5\linewidth]{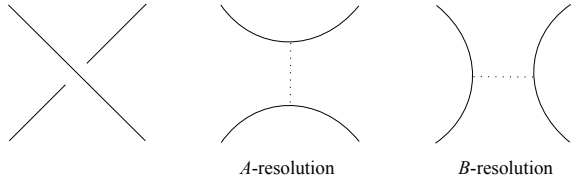}
\caption{$A$--resolution and $B$--resolution}
\label{fig:resolution}
\end{figure}

\quad

\begin{definition} 
A diagram $D$ is \textit{$A$--adequate} (resp. \textit{$B$--adequate}) 
if the graph $G_{\sigma_+}(D)$ ( resp. $G_{\sigma_-}(D)$ ) has no one-edged loop. 
If $D$ is both $A$--adequate and  $B$--adequate,  then $D$ is called \textit{adequate}.  
A knot is \textit{$A$--adequate} (resp. \textit{$B$--adequate}) 
if it has a $A$--adequate diagram (resp. $B$--adequate diagram).
A knot  is \textit{adequate} if it has an adequate diagram. 
\end{definition}

\begin{cor_finite-sequence}
Any knot obtained by a finite sequence of cabling, 
untwisted $\omega$--generalized Whitehead doublings with $\omega > 0$ 
and connected sums of $B$--adequate knots or torus knots satisfies the Strong Slope Conjecture. 
\end{cor_finite-sequence}

This result follows from a more general proposition below (Proposition~\ref{induction}), 
for which we introduce the following technical condition. 

\begin{definition}
\label{condition}
We say that $K$ satisfies \textit{Condition $\delta$} if 
\begin{enumerate}
\item
$\delta_K(n)=an^2 + bn + c(n)$ has period at most $2$, 
\item
$b \le 0$, 
\item $4a \in \mathbb{Z}$, and 
\item $b=0 \implies a \neq 0$.
\end{enumerate}
\end{definition}

\begin{remark}
This Condition $\delta$ is slightly stronger than the Condition $\delta$ in \cite{BMT_graph}.  
They are the same except for the addition of item (4).
\end{remark}

\begin{definition}
\label{kappa}
Let $\mathcal{K}$ be the maximal set of knots in $S^3$ of which each is either the trivial knot or satisfies the Sign Condition, Condition $\delta$, 
and the Strong Slope Conjecture.
\end{definition}

\begin{example} \label{Kexamples}
Torus knots and $B$--adequate knots belong to $\mathcal{K}$.
\begin{itemize}
\item 
The trivial knot is in $\mathcal{K}$ by definition.
\item
Any nontrivial torus knot satisfies the Strong Slope Conjecture and Condition $\delta$ via \cite[Theorem 3.9]{KT}.   
More precisely, 
if $K$ is a positive torus knot, then $0 < 4a \in \mathbb{Z}$ and $b = 0$. 
If $K$ is a negative torus knot, 
then $a = 0$ and $b < 0$. 
Furthermore, it follows from \cite[Proposition~4.3]{BMT_graph} that any torus knot satisfies the Sign Condition.  
Hence torus knots are in $\mathcal{K}$. 

\item
Any nontrivial $B$--adequate knot satisfies the Strong Slope Conjecture and Condition $\delta$ via \cite{FKP, FKP2} and \cite[Lemma 3.6, 3.8]{KT}.  
More precisely, 
$0 \le 4a \in \mathbb{Z},  b \le 0$. 
See \cite[Lemma 3.6]{KT}. 
If $b = 0$, then $K$ is a torus knot and $a > 0$ (\cite[Lemma~3.8]{KT}).
Furthermore, it follows from \cite[Proposition~4.4]{BMT_graph} that any $B$--adequate knot satisfies the Sign Condition.  
Hence $B$--adequate knots are in $\mathcal{K}$. 
\end{itemize}
\end{example}

\begin{proposition}
\label{induction}
The set $\mathcal{K}$ is closed under connected sum, 
cabling and untwisted $\omega$--generalized Whitehead doubling with $\omega > 0$.  
\end{proposition}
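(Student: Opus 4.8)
The plan is to take knots in $\mathcal{K}$ and, for each of the three operations, verify that the resulting knot again satisfies the three defining properties of $\mathcal{K}$: the Sign Condition, Condition~$\delta$, and the Strong Slope Conjecture. Two preliminary observations streamline the argument. First, a nontrivial $K \in \mathcal{K}$ has, by Condition~$\delta$, constant leading coefficients $a(n)\equiv a$ and $b(n)\equiv b$; hence by Remark~\ref{constant} its Strong Slope Conjecture automatically upgrades to the Strong Slope Conjecture with $SS(1)$, which is the form of the hypothesis required by Theorem~\ref{slopeconjectureWmtSignCond}. Second, I will carry along the auxiliary invariant $a \ge 0$, which holds for the base knots by Example~\ref{Kexamples} and which, together with item~(4), is what is needed to control item~(4) of Condition~$\delta$ under the operations.

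The doubling case is the heart of the argument and is supplied by the machinery of this paper. Since the doubling is untwisted we set $\tau = 0$, so that the hypothesis ``$b_1 = 0 \implies a_1 \ne \tfrac{\tau}{4}$'' of Proposition~\ref{maxdeg-signcond} and Theorem~\ref{slopeconjectureWmtSignCond} is exactly item~(4) of Condition~$\delta$. For nontrivial $K \in \mathcal{K}$, the Sign Condition and Condition~$\delta$ supply the remaining hypotheses (period at most $2$ and $b_1 \le 0$), so Theorem~\ref{slopeconjectureWmtSignCond} yields the Strong Slope Conjecture (with $SS(1)$) for $W_\omega^0(K)$ and Proposition~\ref{maxdeg-signcond} yields both the Sign Condition and the explicit degree polynomial. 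Reading Condition~$\delta$ off that polynomial: when $a_1 > 0$ one has $a_W = 4a_1$, so $4a_W = 16a_1 \in \mathbb{Z}$ and $b_W = -4a_1 + 2b_1 - \tfrac12 < 0$; otherwise $a_W = 0$ and $b_W = -\tfrac12 < 0$. In every case $a_W \ge 0$, $4 a_W \in \mathbb{Z}$, and $b_W < 0$ (so item~(4) is vacuous), and by Remark~\ref{no_period} the quasi-polynomial is honestly a polynomial. The trivial knot is excluded from the hypotheses of these results (it has $b = \tfrac12 > 0$) and is treated directly: $W_\omega^0(\bigcirc)$ is a twist knot, hence alternating and so $B$--adequate, and therefore lies in $\mathcal{K}$ by Example~\ref{Kexamples} (or is itself trivial).

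For connected sum and cabling I would invoke the corresponding closure results of \cite{BMT_graph}, whose Condition~$\delta$ agrees with ours except for item~(4); these establish items~(1)--(3) of Condition~$\delta$, the Sign Condition, and the Strong Slope Conjecture (the last via explicit essential-surface constructions), leaving only item~(4) and the invariant $a \ge 0$ to be checked. For a connected sum, multiplicativity of the normalized colored Jones function gives $\delta_{K_1 \# K_2}(n) = \delta_{K_1}(n) + \delta_{K_2}(n) - \tfrac12 n + \tfrac12$, so $a = a_1 + a_2 \ge 0$ and $b = b_1 + b_2 - \tfrac12 \le -\tfrac12 < 0$; thus item~(4) holds vacuously and the Sign Condition follows from the sign of the product of leading coefficients. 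For a cable one uses the standard cabling formula for $\delta$ together with the inductive invariant $a \ge 0$ to see that $a_{\text{cable}} \ge 0$ and that $b_{\text{cable}} = 0$ forces $a_{\text{cable}} > 0$, which is precisely item~(4).

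I expect the main obstacle to be the bookkeeping that propagates item~(4)—the clause added to Condition~$\delta$ relative to \cite{BMT_graph}—through all three operations, since it is exactly this clause, at $\tau = 0$, that matches the degenerate hypothesis of the doubling theorems and that must be shown to survive each construction; for cabling this is where one genuinely needs the invariant $a \ge 0$ rather than Condition~$\delta$ alone. The new geometric and quantum input for the doubling operation is furnished in its entirety by Proposition~\ref{maxdeg-signcond} and Theorem~\ref{slopeconjectureWmtSignCond}, while the surface constructions realizing the Strong Slope Conjecture under cabling and connected sum are inherited from \cite{BMT_graph}; thus no new boundary-slope computations are required beyond verifying that the degree data continues to satisfy Condition~$\delta$.
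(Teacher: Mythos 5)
Your proposal is correct and follows essentially the same route as the paper: it splits into three lemmas, imports items (1)--(3) of Condition $\delta$, the Sign Condition, and the Strong Slope Conjecture for connected sums and cables from \cite{BMT_graph}, checks the new item (4) by hand, and handles the doubling case via Proposition~\ref{maxdeg-signcond} and Theorem~\ref{slopeconjectureWmtSignCond} with $\tau=0$. The only substantive deviation is your auxiliary invariant $a\ge 0$, which the paper's cabling lemma shows is not actually needed (if $q^2a=0$ then $a=0$, so $p/q<4a$ forces $p<0$ and hence $qb+\tfrac{(q-1)p}{2}<0$ from $b\le 0$ and $q>1$ alone, while in the case $p/q\ge 4a$ coprimality gives $pq/4\ne 0$); also, $W^0_\omega(\bigcirc)$ is in fact the unknot (being a component of a two-bridge link) rather than a nontrivial twist knot, though your parenthetical already covers that case.
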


To prove this proposition, we prepare some lemmas. 

\begin{lemma}
\label{connected sum}
Assume that $K_i \in \mathcal{K}$. 
Then $K_1 \sharp K_2 \in \mathcal{K}$. 
\end{lemma}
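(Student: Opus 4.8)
The plan is to show $K_1 \sharp K_2 \in \mathcal{K}$ by verifying the three defining properties in Definition~\ref{kappa}. If either factor is trivial, then $K_1 \sharp K_2$ is isotopic to the other factor and already lies in $\mathcal{K}$; so I assume both $K_1,K_2$ are nontrivial, whence $K_1\sharp K_2$ is nontrivial. The main input is the multiplicativity of the colored Jones function under connected sum: in the normalization of Subsection~\ref{computation_Jones} one has $J'_{K_1\sharp K_2,n}(q) = J'_{K_1,n}(q)\,J'_{K_2,n}(q)$, equivalently $J_{K_1\sharp K_2,n}(q) = J_{K_1,n}(q)J_{K_2,n}(q)/J_{\bigcirc,n}(q)$. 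Taking maximum degrees and using $d_+[J_{\bigcirc,n}(q)] = \tfrac{n-1}{2}$, I obtain
\[
\delta_{K_1\sharp K_2}(n) = \delta_{K_1}(n) + \delta_{K_2}(n) - \tfrac{n}{2} + \tfrac12,
\]
so that $a = a_1 + a_2$, $b = b_1 + b_2 - \tfrac12$, and $c(n) = c_1(n)+c_2(n)+\tfrac12$. Since each $a_i,b_i$ is constant and each $c_i$ has period at most $2$, the coefficients $a,b$ are constant and $c$ has period at most $2$; moreover $4a = 4a_1 + 4a_2 \in \mathbb{Z}$ and $b \le -\tfrac12 < 0$, so the implication in Condition~$\delta$(4) is vacuous. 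Thus all four parts of Definition~\ref{condition} hold.

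Next I would verify the Sign Condition (Definition~\ref{sign}). Comparing leading coefficients in $J_{K_1\sharp K_2,n}(q) = J_{K_1,n}(q)J_{K_2,n}(q)/J_{\bigcirc,n}(q)$, and using that the leading coefficient of $J_{\bigcirc,n}(q)=[n]$ is $1$, I get $\varepsilon_n(K_1\sharp K_2) = \varepsilon_n(K_1)\,\varepsilon_n(K_2)$. Each factor depends only on the parity of $n$, hence so does their product, giving the Sign Condition for $K_1\sharp K_2$.

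For the Strong Slope Conjecture, note that since $a,b$ are constant, $K_1\sharp K_2$ has the unique Jones slope $4a = 4a_1+4a_2$, and by Remark~\ref{constant} it suffices to realize $SS(1)$. By the same remark each $K_i$ satisfies $SS(1)$: there is an essential surface $F_i \subset E(K_i)$ of boundary slope $4a_i = p_i/q_i$ (in lowest terms) with $\chi(F_i)/(|\bdry F_i|\,q_i) = 2b_i$. Realizing the connected sum exterior as $E(K_1\sharp K_2) = E(K_1)\cup_A E(K_2)$, glued along a meridional annulus $A$ (a neighborhood of $\mu_1\sim\mu_2$, with $\lambda = \lambda_1+\lambda_2$), I would glue $m_1$ parallel copies of $F_1$ to $m_2$ parallel copies of $F_2$ along their spanning arcs on $A$, choosing $m_1=|\bdry F_2|q_2$ and $m_2=|\bdry F_1|q_1$ so that the numbers of gluing arcs match: $m_1|\bdry F_1|q_1 = m_2|\bdry F_2|q_2 =: L$. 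Additivity of framings ($\lambda=\lambda_1+\lambda_2$) gives boundary slope $4a_1+4a_2$ on the resulting surface $F$. The key bookkeeping is that gluing along $L$ arcs gives $\chi(F) = m_1\chi(F_1)+m_2\chi(F_2) - L$, while the total meridional intersection of $\bdry F$ equals the number $L$ of arc endpoints lying on a component of $\bdry A$, so $|\bdry F|\,q = L$. Hence
\[
\frac{\chi(F)}{|\bdry F|\,q} = \frac{\chi(F_1)}{|\bdry F_1|q_1} + \frac{\chi(F_2)}{|\bdry F_2|q_2} - 1 = 2b_1 + 2b_2 - 1 = 2b,
\]
which is exactly the $SS(1)$ condition for the slope $4a$. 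If $F$ is non-orientable, I would replace it by the frontier of a tubular neighborhood and invoke Lemma~\ref{double} to preserve both the boundary slope and the ratio; combining all three verifications gives $K_1\sharp K_2\in\mathcal{K}$.

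I expect the main obstacle to be the geometric step of the third paragraph: arranging the gluing along the meridional annulus so that the spanning-arc patterns of the $F_i$ match, confirming that the framings add to give slope $4a_1+4a_2$, and proving the glued surface $F$ is essential. For essentiality I would argue as in the proof of Theorem~\ref{bothslopeconjecturesWmt}(1): the decomposing annulus $A$ is incompressible in $E(K_1\sharp K_2)$ and each $F_i$ is essential in $E(K_i)$, so a standard innermost-disk and outermost-arc argument rules out compressing and $\bdry$--compressing disks, and no component of $F$ is a boundary-parallel annulus.
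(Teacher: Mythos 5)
Your argument is correct, but it takes a much longer, self-contained route than the paper. The paper's proof is essentially two lines: after discarding the trivial-knot case, it cites \cite[Lemma~4.1]{BMT_graph} for everything except item (4) of Condition~$\delta$ (the only item added in this paper relative to \cite{BMT_graph}), writes down the same formula $\delta_{K_1\sharp K_2}(n)=\delta_{K_1}(n)+\delta_{K_2}(n)-\tfrac12 n+\tfrac12$ that you derive, and observes that $b_1+b_2-\tfrac12<0$ makes (4) vacuous. You instead reconstruct the content of the cited lemma from scratch: multiplicativity of $J'$ under connected sum gives both the degree formula and the inheritance of the Sign Condition (your leading-coefficient computation $\varepsilon_n(K_1\sharp K_2)=\varepsilon_n(K_1)\varepsilon_n(K_2)$ is valid since $[n]$ has leading coefficient $1$ and the division is exact), and the swallow-follow gluing of $m_1$ copies of $F_1$ to $m_2$ copies of $F_2$ along the meridional annulus yields the Strong Slope Conjecture; your bookkeeping $\chi(F)=m_1\chi(F_1)+m_2\chi(F_2)-L$ and $|\bdry F|\,q=L$ does produce $2b_1+2b_2-1=2b$, matching the linear coefficient. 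What your approach buys is independence from the companion paper; what it costs is that the geometric step (matching the spanning-arc patterns on the annulus, additivity of the longitudinal framing, and essentiality of the glued surface) is only sketched, whereas the paper deliberately quarantines exactly that work in \cite{BMT_graph}. Your conclusion and the key inequality $b<0$ agree with the paper's, so I see no gap in substance, only in the level of detail of the surface-gluing step that you yourself flag.
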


\begin{proof}
Since the trivial knot is the identity for the connected sum operation, we may assume neither $K_1$ nor $K_2$ is trivial.  
Then by \cite[Lemma~4.1]{BMT_graph} we only need to see that (4) in Condition $\delta$ holds. 
Write $\delta_{K_i}(n) = a_i n^2 + b_i n + c_i(n)$.  
Then 
\begin{align*}
\delta_{K_1 \sharp K_2}(n) 
&= \delta_{K_1}(n) + \delta_{K_2}(n) - \frac{1}{2}n + \frac{1}{2} \\
&= (a_1 + a_2)n^2 + (b_1 + b_2 -\frac{1}{2}) n + (c_1(n) + c_2(n) + \frac{1}{2}). 
\end{align*}
Since $b_i \le 0$,  we have
$b_1 + b_2 -\frac{1}{2} < 0$.

Hence (4) obviously holds for $K_1 \sharp K_2$.  
\end{proof}

\begin{lemma}
\label{cabling}
Assume that $K \in \mathcal{K}$. 
Then its $(p, q)$--cable $K_{p, q}$ \($q > 1$\) belongs to $\mathcal{K}$. 
\end{lemma}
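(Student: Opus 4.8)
The plan is to imitate the proof of Lemma~\ref{connected sum}. If $K$ is the trivial knot then its $(p,q)$--cable is the torus knot $T_{p,q}$, which already belongs to $\mathcal{K}$ by Example~\ref{Kexamples}; so we may assume $K$ is nontrivial. Our Condition~$\delta$ (Definition~\ref{condition}) differs from the Condition~$\delta$ of \cite{BMT_graph} only by item~(4), and \cite{BMT_graph} establishes that its set of knots is closed under cabling. Quoting that closure, $K_{p,q}$ satisfies the Sign Condition, the Strong Slope Conjecture, and items~(1)--(3) of Condition~$\delta$; thus $\delta_{K_{p,q}}(n)=a'n^2+b'n+c'(n)$ is a quadratic quasi-polynomial of period at most $2$ with $b'\le 0$ and $4a'\in\Z$. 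Consequently the only thing left to verify is item~(4): that $b'=0$ forces $a'\ne 0$.

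To do this I would invoke the cabling formula for $d_+[J_{K_{p,q},n}(q)]$ from \cite{KT} (as used in \cite{BMT_graph}). Writing $\delta_K(n)=an^2+bn+c(n)$, this formula expresses $\delta_{K_{p,q}}(n)$ by one of two quadratic polynomials according to whether the cabling slope $p/q$ lies above or below the Jones slope $4a$ of $K$: in the torus-dominated regime $p/q>4a$ the positive $(p,q)$--torus pattern controls the top degree and the leading coefficient is $a'=\tfrac{pq}{4}$; in the companion-dominated regime $p/q<4a$ the companion controls it and $a'=q^2a$, with $b'$ an explicit affine expression in $a,b,p,q$ carrying a negative normalization shift analogous to the $-\tfrac12 n$ term that appears for connected sums.

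Item~(4) then follows by inspecting these cases, including the sign of $p$. Whenever the leading coefficient is $a'=\tfrac{pq}{4}$ with $pq\neq 0$ we have $a'\neq 0$ and (4) holds regardless of $b'$; here $\gcd(p,q)=1$ together with $q>1$ forces $p\neq 0$. In the companion-dominated regime, where $a'=q^2a$, either the normalization shift makes $b'<0$ strictly for $q\geq 2$ (exactly as $b_1+b_2-\tfrac12<0$ made (4) vacuous in Lemma~\ref{connected sum}), or, should $b'=0$ occur, the shift must vanish so that $b'=qb$; then $q>0$ gives $b=0$, item~(4) for $K$ gives $a\neq 0$, and hence $a'=q^2a\neq 0$. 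The remaining, ``negative-like'' cables with $a'=0$ have companion satisfying $a\le 0$, so by item~(4) for $K$ we get $b<0$ and the formula yields $b'<0$. In every case $b'=0$ implies $a'\neq 0$, so $K_{p,q}\in\mathcal{K}$.

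The main obstacle is precisely the bookkeeping behind the previous paragraph: one must read off the linear coefficient $b'$ from the cabling formula accurately enough — tracking both the normalization shift and the dependence on the sign of $p$ — to be certain that $b'$ cannot vanish together with $a'$. Once this shift is pinned down, the positivity of $q^2$, the nonvanishing of $pq$, and item~(4) for the companion $K$ close the argument mechanically, just as the strict inequality $b_1+b_2-\tfrac12<0$ did for the connected sum.
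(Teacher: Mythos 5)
Your proposal is correct and follows essentially the same route as the paper: reduce to verifying item (4) of Condition $\delta$ by citing the closure result of \cite{BMT_graph} for the remaining properties, then inspect the two cases of the cabling formula for $\delta_{K_{p,q}}(n)$. The paper's version of the case analysis is a bit leaner — in the companion-dominated regime $p/q<4a$ with $q>1$ the shift $\frac{(q-1)(p-4qa)}{2}$ is strictly negative, so together with $qb\le 0$ the linear coefficient is automatically negative and item (4) is vacuous there, while in the regime $p/q\ge 4a$ the leading coefficient $\frac{pq}{4}$ is nonzero since coprimality and $q>1$ force $p\ne 0$ — but your hedged ``either/or'' bookkeeping lands on the same conclusions.
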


\begin{proof}
If $K$ is trivial, then its cables $K_{p,q}$ are torus knots. 
By Example~\ref{Kexamples},  
these belong to $\mathcal{K}$.  
Thus we may assume $K$ is non-trivial. 

By \cite[Lemma~4.2]{BMT_graph} we only need to see that (4) in Condition $\delta$ holds. 
If $\delta_{K}(n) = a n^2 + b n + c(n)$, 
then we have \cite[Proposition 3.1]{BMT_graph}: 
\begin{equation*}
  \delta_{K_{p,q}}(n)=
   \left\{ \begin{array}{ll} 
           q^2 a n^2 +\left(q b+\frac {(q-1)(p-4qa)}{2} \right) n & \\
          \quad\quad +\left(a(q-1)^2-(b+\frac p 2)(q-1)+c(i)\right) & \mbox{for } \frac p q< 4a, \\
          \frac {pq(n^2-1)} 4+C_j(K_{p,q})  &  \mbox{for }\frac p q \geq 4a.
          \end{array} \right. 
\end{equation*}
where $i \equiv_{(2)} q(n-1)+1$, $j \equiv_{(2)} n$, 
and $C_j(K_{p,q})$ 
is a number that only depends on the knot $K$ and the numbers $p$ and $q$.

Assume first that $\frac{p}{q} < 4a$. 
If $q^2a = 0$, then $a=0$.  Thus $p/q < 0$ and $p < 0$ (because $q > 1$).
Then
$q b+\frac {(q-1)(p-4qa)}{2} = qb + \frac{p(q-1)}{2} < 0$ (because $b \le 0$). 
Hence, 
if $q b+\frac {(q-1)(p-4qa)}{2} = 0$, then $q^2a \ne 0$. 
In the case where $\frac{p}{q} \ge 4a$, 
the linear therm is $0$, but the quadratic term $\frac{pq}{4}$ is not $0$. 

Therefore $K_{p, q}$ also satisfies Condition $\delta$. 
\end{proof}

\begin{lemma}
\label{doubling}
Assume that $K \in \mathcal{K}$. 
Then its untwisted $\omega$--generalized Whitehead double $W_{\omega}^0(K)$ 
with $\omega >0$ also belongs to $\mathcal{K}$. 
\end{lemma}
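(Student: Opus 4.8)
The plan is to obtain the lemma as a direct consequence of Proposition~\ref{maxdeg-signcond} and Theorem~\ref{slopeconjectureWmtSignCond} in the case $\tau = 0$, after separating off the trivial knot. If $K$ is the unknot, then $W_\omega^0(K)$ is the pattern $k_\omega^0$ placed in $S^3$ via the standard embedding of $V$, which is a twist knot; a twist knot is either trivial or alternating, and in either case lies in $\mathcal{K}$ (by definition, or as a $B$--adequate knot via Example~\ref{Kexamples}). So I may assume $K$ is nontrivial, whence $K$ satisfies the Sign Condition, Condition~$\delta$, and the Strong Slope Conjecture.

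First I would check that all the hypotheses of Proposition~\ref{maxdeg-signcond} and Theorem~\ref{slopeconjectureWmtSignCond} are met with $\tau = 0$. Writing $\delta_K(n) = a n^2 + b n + c(n)$ as in Definition~\ref{condition}, the coefficients $a, b$ are already constant, so they equal the quantities $a_1, b_1$ of those statements. Condition~$\delta$ gives period at most $2$ and $b_1 = b \le 0$, and its item~(4), namely $b = 0 \implies a \ne 0$, is exactly the hypothesis ``$b_1 = 0 \implies a_1 \ne \tfrac{\tau}{4}$'' once $\tau = 0$. The Sign Condition is assumed. Finally, because $a(n)$ and $b(n)$ are constant, Remark~\ref{constant} promotes the Strong Slope Conjecture for $K$ to the Strong Slope Conjecture with $SS(1)$. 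With these hypotheses in hand, Theorem~\ref{slopeconjectureWmtSignCond} yields that $W_\omega^0(K)$ satisfies the Strong Slope Conjecture, while Proposition~\ref{maxdeg-signcond} yields both the Sign Condition for $W_\omega^0(K)$ and an explicit formula for $\delta_{W_\omega^0(K)}(n)$.

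It then remains to verify Condition~$\delta$ for $W := W_\omega^0(K)$ from that formula with $\tau = 0$. In each of its three regimes the formula is a genuine polynomial, giving item~(1). When $a_1 > 0$ one reads off $a_W = 4a_1$ and $b_W = -4a_1 + 2b_1 - \tfrac12$, so $4a_W = 16a_1 \in \mathbb{Z}$ since $4a_1 \in \mathbb{Z}$, and $b_W < 0$ because $a_1 > 0$ and $b_1 \le 0$. When $a_1 < 0$, or $a_1 = 0$ with $b_1 \ne 0$, the formula is linear with $a_W = 0$ and $b_W = -\tfrac12$, so $4a_W \in \mathbb{Z}$ and $b_W < 0$ again. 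Thus $b_W < 0$ throughout, which gives items~(2) and~(4) (the latter vacuously), and item~(3) is immediate; hence $W \in \mathcal{K}$.

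The step requiring the most care is confirming that the three membership conditions of $\mathcal{K}$ transfer: the identification of Condition~$\delta$(4) with the ``$a_1 \ne \tfrac{\tau}{4}$'' clause genuinely uses $\tau = 0$, and upgrading the Strong Slope Conjecture to $SS(1)$ depends on the constancy of $a(n)$ and $b(n)$ through Remark~\ref{constant}. I would also check that the three cases of the degree formula are exhaustive under Condition~$\delta$; they are, because item~(4) forbids $a_1$ and $b_1$ from vanishing simultaneously, so $a_1 = 0$ forces $b_1 \ne 0$. The trivial case must be handled on its own, as the unknot violates $b \le 0$ and so falls outside the reach of Proposition~\ref{maxdeg-signcond}.
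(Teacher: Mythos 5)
Your proposal is correct and follows essentially the same route as the paper: reduce to nontrivial $K$, verify the hypotheses of Proposition~\ref{maxdeg-signcond} and Theorem~\ref{slopeconjectureWmtSignCond} with $\tau=0$ (using Condition~$\delta$(4) for the ``$a_1\ne\tau/4$'' clause and Remark~\ref{constant} to upgrade to $SS(1)$), and then read Condition~$\delta$ for the double off the explicit degree formula, noting $b_W<0$ in every case. The only cosmetic difference is the trivial-knot case, where the paper simply observes that the untwisted double of the unknot is again the unknot, while you route it through twist knots; both dispositions are valid.
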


\begin{proof}
Note that an untwisted $\omega$--generalized Whitehead double of the trivial knot is again a trivial knot.  
So we may assume $K$ is non-trivial. 

Write $\delta_{K}(n) = a n^2 + b n + c(n)$. 
Since $K$ satisfies Condition $\delta$, 
$\delta_{K}(n)$ has period at most $2$ and 
we have $b \le 0$, $4a \in \mathbb{Z}$, 
and $a \ne 0$ if $b = 0$.

Since $\omega \in \Z$,   
we may assume $\omega \geq 1$. 
Since $\tau = 0$, 
Propositions~\ref{maxdeg-signcond} show that 

\begin{equation*}
  \delta_{W_{\omega}^{0}(K)}(n)=
   \left\{ \begin{array}{ll} 
            4a n^2 +(- 4a+2b - \frac{1}{2}) n+a - b + c_1+\frac{1}{2} & (a > 0), \\
             -\frac 1 2 n+C_+(K, 0)+\frac{1}{2}  &  (a < 0), \\
             -\frac{1}{2}n + C_+'(K, 0) + \frac{1}{2} &  (a = 0, b \ne 0),
         \end{array} \right. 
\end{equation*}

We now check Condition $\delta$ for $W_{\omega}^{0}(K)$. 
Obviously (1) in Condition $\delta$ holds; actually $ \delta_{W_{\omega}^{0}(K)}(n)$ is a usual polynomial rather than quasi-polynomial. 
Since $4(4a)$, $0$ are integers, 
we have (3) in Condition $\delta$. 

To address (2) and (4) in Condition $\delta$ we  first examine the coefficients of $\delta_{W_{\omega}^{0}(K)}(n)$. 
When $a > 0$,  
since $b \le 0$ (by Condition $\delta$ for $K$), we have
$-4a + 2b - \frac{1}{2} < 0$. 
Thus the coefficient of linear term of $\delta_{W_{\omega}^{0}(K)}(n)$ is negative; 
in particular it is not $0$. 
When $a \le 0$, 
the coefficient of linear term of $\delta_{W_{\omega}^{0}(K)}(n)$ is $-\frac{1}{2} < 0$. 
Hence (2) and (4) in Condition $\delta$ hold. 

Since $K$ satisfies the Sign Condition, the last assertion in Proposition~\ref{maxdeg-signcond} shows that 
$W_{\omega}^{0}(K)$ also satisfies the Sign Condition. 
So it remains to show that $W_{\omega}^{0}(K)$ enjoys the Strong Slope Conjecture, 
which is equivalent in this case to the Strong Slope Conjecture with $SS(1)$ by Remark~\ref{constant}. 
Note that since $K$ satisfies the Strong Slope Conjecture and Condition $\delta$, 
it also satisfies the Strong Slope Conjecture with $SS(1)$; again,
see Remark~\ref{constant}. 
Since $\tau = 0$, 
to apply Theorem~\ref{slopeconjectureWmtSignCond} we need to check extra conditions: 
if $b = 0$, 
then $a \ne \frac{\tau}{4} = 0$.  

If $b = 0$, then by Condition $\delta$ for $K$, 
we have $a \ne 0$.  
Hence, Theorem~\ref{slopeconjectureWmtSignCond} shows that $W_{\omega}^{0}(K)$ satisfies the Strong Slope Conjecture. 
\end{proof}

\begin{proof}[Proof of Proposition~\ref{induction}]
The proof follows from Lemmas~\ref{connected sum}, \ref{cabling} and \ref{doubling}. 
\end{proof}

\begin{proof}[Proof of Corollary~\ref{finite-sequence}]
Since the trivial knot, torus knots, and $B$--adequate knots belong to $\mathcal{K}$ by Example~\ref{Kexamples}, 
the proof follows from Proposition~\ref{induction}. 
\end{proof}

As we mentioned in Section~\ref{section:Introduction} 
adequate knots satisfy the assumption in Theorem~\ref{bothslopeconjecturesWmt}. 
Recall that a positive torus knot is $A$--adequate and a negative torus knot is $B$--adequate \cite[Example 9]{FKP}.  
We say that a knot $K$ is \textit{inadequate} if it is neither $A$--adequate nor $B$--adequate.
We close this section by observing that $\mathcal{K}$ contains infinitely many inadequate knots.

\begin{example}
\label{inadequate}
Let $T_{p, q}$ and $T_{p', -q'}$ be torus knots with odd integers $p, q, p', q' > 1$. 
Then a connected sum $T_{p, q}\, \sharp\, T_{p', -q'}$ is inadequate. 
Following Corollary~\ref{finite-sequence} $T_{p, q} \,\sharp\, T_{p', -q'}$ belongs to $\mathcal{K}$. 
\end{example}

\begin{proof}
Let us denote the minimum degree of $J'_{K,n}(v)$ by $d_-[J'_{K,n}(v)]$. 
(Here we use the letter ``$v$'' instead of ``$q$'' for variables of normalized colored Jones functions to avoid notational confusion.)
Assume for a contradiction that $T_{p, q}\, \sharp\, T_{p', -q'}$ is $A$--adequate or $B$--adequate. 
Thus $T_{p, q}\, \sharp\, T_{p', -q'}$ admits a diagram $D$ with $c_{+}$ positive crossings and $c_{-}$ negative crossings, 
which is $A$--adequate or $B$--adequate. 
Then \cite[Lemma~6]{FKP} shows that 
if $D$ is $A$--adequate, then $d_{-}[J'_{T_{p, q}\, \sharp\, T_{p', -q'}, n}(v)] = - \frac{c_{-}}{2} n^2 + O(n)$, 
and if $D$ is $B$--adequate, 
then $d_{+}[J'_{T_{p, q}\, \sharp\, T_{p', -q'}, n}(v)] =  \frac{c_{+}}{2} n^2 + O(n)$, 
where $O(n)$ denotes a term that is at most linear in $n$. 
To apply this result let us compute $d_{+}[J'_{T_{p,q}\, \sharp\, T_{p', q'}, n}(v)]$ and $d_{-}[J'_{T_{p,q}\, \sharp\, T_{p', q'}, n}(v)]$.
Garoufalidis \cite[Section~4.8]{Garoufalidis} computes normalized colored Jones functions of $T_{p, q}$ $(p, q > 0)$ explicitly: 
\begin{eqnarray*}
 && 
d_{+}[J'_{T_{p,q}, n}(v)] = \frac{pq}{4}n^2 - \frac{1}{2}n - \frac{pq-2}{4} - (1+(-1)^n)\frac{(p-2)(q-2)}{8},\\
&& d_{-}[J'_{T_{p,q}, n}(v)] = \frac{(p-1)(q-1)}{2}n - \frac{(p-1)(q-1)}{2}. 
\end{eqnarray*}
Since $T_{p', -q'}=\overline{T_{p', q'}}$, we have
\begin{eqnarray*}
 && 
d_{+}[J'_{T_{p', -q'}, n}(v)] 
= - d_{-}[J'_{T_{p', q'}, n}(v)] 
= -\frac{(p'-1)(q'-1)}{2}n + \frac{(p'-1)(q'-1)}{2},\\
&& d_{-}[J'_{T_{p', -q'}, n}(v)] 
= - d_{+}[J'_{T_{p', q'}, n}(v)] 
=  -\frac{p'q'}{4}n^2 + \frac{1}{2}n + \frac{p'q'-2}{4} + (1+(-1)^n)\frac{(p'-2)(q'-2)}{8}. 
\end{eqnarray*}

Hence 
\[
d_{+}[J'_{T_{p,q}\, \sharp\, T_{p', q'}, n}(v)] 
= d_{+}[J'_{T_{p,q}, n}(v)] + d_{+}[J'_{T_{p' ,q'}, n}(v)]
= \frac{pq}{4} n^2 + O(n)
\]
and
\[
d_{-}[J'_{T_{p,q}\, \sharp\, T_{p', q'}, n}(v)] 
= d_{-}[J'_{T_{p,q}, n}(v)] + d_{-}[J'_{T_{p' ,q'}, n}(v)]
= -\frac{p'q'}{4} n^2 + O(n)
\]

Since $p, q, p', q' > 1$ are odd integers, 
$\frac{pq}{4},  -\frac{p'q'}{4} \not \in \frac{1}{2}\mathbb{Z}$. 
This is a contradiction. 
\end{proof}

\begin{question}
Is a twisted generalized Whitehead double of an inadequate knot also inadequate?
\end{question}

\section{Non-adequate Whitehead doubles}
\label{non-adequate}

As shown by Kalfagianni-Tran \cite{KT} and Futer-Kalfagianni-Purcell \cite{FKP, FKP2} (cf.\ \cite{Ozawa}), 
adequate knots and their iterated cables satisfy the Strong Slope Conjecture. 
In some cases a Whitehead double is a $B$--adequate knot. 

\begin{proposition}
\label{minus_adequate_W}
Let $K$ be a knot which has a $B$--adequate diagram with non-negative writhe. 
The its untwisted negative Whitehead double is also $B$--adequate.
\end{proposition}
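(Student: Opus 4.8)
The plan is to produce an explicit $B$--adequate diagram of $W_1^0(K)$ and then to verify the defining condition crossing by crossing. First I would start with a $B$--adequate diagram $D$ of $K$ whose writhe $w(D)\ge 0$, with $c_+$ positive and $c_-$ negative crossings, and build a diagram $\widehat{D}$ of the untwisted negative Whitehead double as follows. Take the blackboard--framed $2$--parallel $D^{(2)}$ of $D$; since the blackboard framing equals $w(D)$, insert $w(D)$ negative full twists into one band of $D^{(2)}$ to correct the framing to $0$, and finally insert the negative clasp together with a turnback at a single point so that the result is a connected knot diagram representing $W_1^0(K)$. The crossings of $\widehat{D}$ then fall into three families: the $4(c_++c_-)$ crossings produced by doubling the crossings of $D$, the $2w(D)$ crossings of the framing--correction twists, and the $2$ crossings of the clasp.

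Next I would analyze the all--$B$ state $\sigma_-$ on the ``bulk'' coming from $D^{(2)}$. The local model to establish is that $B$--resolving the four crossings lying over a single crossing $c$ of $D$ reproduces the $2$--parallel of the $B$--smoothing of $c$ together with one extra small circle; consequently the state circles of $\sigma_-$ over the bulk are two parallel copies of each state circle of the all--$B$ state of $D$, plus one circle for each crossing of $D$. Because $D$ is $B$--adequate, each crossing of $D$ joins two \emph{distinct} state circles, and from this it follows that each of the four crossings above it joins two distinct circles of $\sigma_-$ (the two parallel copies and the central circle are pairwise distinct). Hence no doubled crossing is a one--edged loop.

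Then I would treat the localized pattern region containing the clasp, the turnback, and the framing twists, which is where the hypothesis $w(D)\ge 0$ enters. The point is that a negative clasp consists of two negative crossings, and the framing correction (being by $-w(D)\le 0$ full twists) consists entirely of negative crossings; thus every crossing of the pattern region is negative. The all--$B$ resolution of a twist region of negative crossings is a ``nugatory chain'': consecutive crossings are separated by small state circles, so each such crossing joins two distinct circles. Inspecting the explicit local picture of the negative clasp with its turnback shows likewise that its two crossings join distinct circles, and that the two ends of this chain attach to the two distinct parallel copies of the appropriate bulk state circle. Therefore no pattern--region crossing is a one--edged loop either, and combining the two analyses shows that $G_{\sigma_-}(\widehat{D})$ has no one--edged loop; hence $\widehat{D}$ is $B$--adequate and $W_1^0(K)$ is $B$--adequate.

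The main obstacle is the local verification in the all--$B$ state, in two places. The first is confirming the ``$2$--parallel plus one extra circle'' description of the doubled crossings and, with it, that $B$--adequacy of $D$ genuinely forces the distinctness of the circles meeting each doubled crossing. The second, and more delicate, is the bookkeeping where the pattern region meets the bulk: one must check that the negative chain produced by the clasp and the framing twists terminates on the two distinct parallel copies rather than closing up on a single circle. It is exactly this step that fails when $w(D)<0$, since then the framing correction would require positive twists whose all--$B$ resolution keeps the two strands parallel and, together with the turnback, produces a one--edged loop; the hypothesis $w(D)\ge 0$ is precisely what rules this out.
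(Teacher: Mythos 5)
Your overall strategy coincides with the paper's: both arguments take the diagram of $W_1^0(K)$ assembled from the blackboard $2$--parallel of a $B$--adequate diagram $D$ of $K$ together with the negative clasp and the writhe--correcting curls, pass to the all--$B$ state, and treat separately the crossings coming from doubling $D$ and the crossings of the pattern region. The one structural difference is that the paper disposes of the doubled crossings by citing Le's Lemma~2.17 \cite{Le-lec} (the $2$--parallel of a $B$--adequate diagram is $B$--adequate), whereas you propose to verify this by a direct local computation --- and it is precisely in that local computation that your proposal contains a concrete error.

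The ``$2$--parallel plus one extra small circle'' description of the all--$B$ state is false: $B$--resolving the four crossings of $D^{(2)}$ lying over a crossing $c$ of $D$ yields exactly the $2$--parallel of the $B$--resolution of $c$ and \emph{no} additional closed circle. (One can check this by tracing the four smoothings directly; alternatively, an extra circle per crossing would already give the wrong maximal degree for the Kauffman bracket of the $2$--parallel of a one--crossing unknot diagram.) Consequently your justification that each doubled crossing joins two distinct circles --- ``the two parallel copies and the central circle are pairwise distinct'' --- appeals to a state circle that does not exist, and the verification you flag as the main obstacle would fail as stated. The correct local picture still delivers the conclusion, but for a different reason: if $s_1\neq s_2$ are the circles of $G_{\sigma_-}(D)$ joined by $c$ (distinct by $B$--adequacy of $D$), the four edges over $c$ form a path through the four parallel copies, of the shape $s_1'-s_1''-s_2''-s_2'$ with the middle edge doubled. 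The two end edges join the two parallel copies of a single original circle and hence are never loops, while the two middle edges join a copy of $s_1$ to a copy of $s_2$ and are non--loops exactly because $s_1\neq s_2$. Since the erroneous circle count is not used anywhere else in your argument, the repair is entirely local; your treatment of the clasp and framing--twist region, where the hypothesis $w(D)\geq 0$ enters just as in the paper's Figure~\ref{fig:adequate_tgW}, is unaffected.
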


\begin{proof}
Let $K$ be a knot which has a $B$--adequate diagram $\mathcal{D}(K)$ whose writhe is not negative; 
$\mathcal{D}_-(K)$ denotes a diagram obtained from $\mathcal{D}(K)$ by applying $B$--resolution at each crossing. 
Then we have a diagram $\mathcal{D}(W_{1}^{0}(K))$ of $W_{1}^{0}(K)$ as in the Figure~\ref{fig:adequate_tgW}. 
Apply $B$--resolution at each crossing to $\mathcal{D}(W_{1}^{0}(K))$ to obtain a diagram $\mathcal{D}^2_-(K) \cup C$,  
where $\mathcal{D}^2(K)$ denotes a diagram obtained from $\mathcal{D}(K)$ by replacing each of its component by $2$ parallels and $\mathcal{D}^2_-(K)$ denotes a diagram obtained from $\mathcal{D}^2(K)$ by applying $B$--resolution at each crossing. 
Since $\mathcal{D}(K)$ is $B$--adequate, 
\cite[Lemma 2.17]{Le-lec} implies that $\mathcal{D}^2(K)$ is also $B$--adequate, 
and hence the $\sigma_{-}$--state graph associated to $\mathcal{D}_{-}^2(K)$ has no loop edge. 
(In \cite{Le-lec} Le use the terminology \textit{plus}-adequate (resp.\ \textit{minus}-adequate) to mean $A$--adequate 
(resp. $B$--adequate).)
Then it is easy to see that the graph associated to $\mathcal{D}^2_-(K) \cup C$ has no loop edge, 
and hence $\mathcal{D}(W_{1}^{0}(K))$ is $B$--adequate as well. 
Thus $W_{1}^{0}(K)$ is $B$--adequate. 
\end{proof}

\begin{figure}[h]
	\centering
	\includegraphics[width=4in]{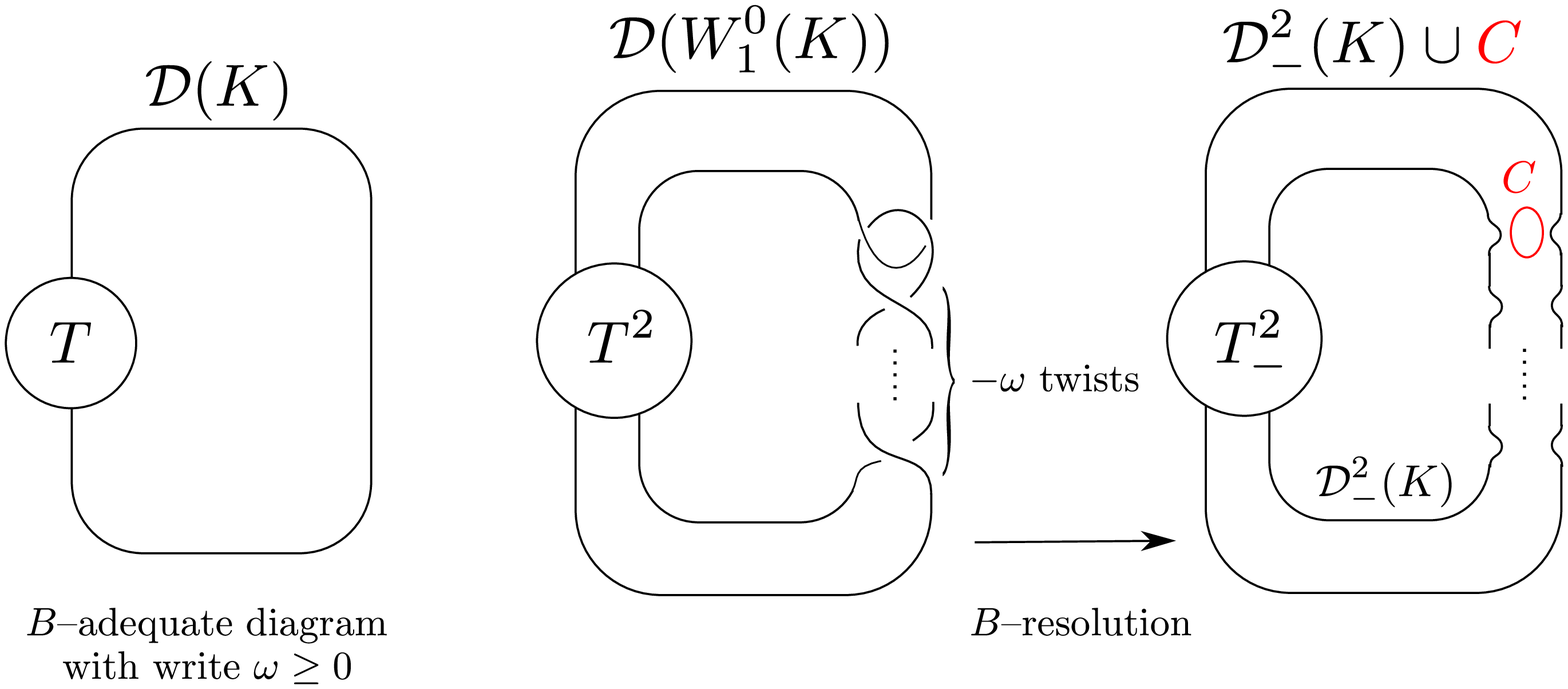}
	\caption{}
	\label{fig:adequate_tgW}
\end{figure}
 
We may expect that most Whitehead doubles are not adequate. 
However, to the best our knowledge, 
there are no explicit such examples. 
So for completeness we give explicit family of Whitehead doubles which are not adequate. 
Recall that $W_1^0(K)$ is the (untwisted) negative Whitehead double of $K$.
In the following, for notational simplicity, 
we denote $W_1^0(K)$ by $W_{-}(K)$. 
We also denote the (untwisted) positive Whitehead double of $K$ by $W_{+}(K)$, which may be written as $W_{-1}^0(K)$.

\begin{theorem}
\label{non-adequate_W}
Let $K$ be the  torus knot $T_{2,-(2m+1)}$ for $m \ge 2$.  
Then $W_{-}(K)$ is not adequate.  
\end{theorem}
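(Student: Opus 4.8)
The plan is to prove the sharper statement that $W_-(K)$ is not $B$--adequate; since adequacy demands a single diagram that is simultaneously $A$-- and $B$--adequate, this already forces non-adequacy. The tool is the degree estimate already exploited in the proof of Example~\ref{inadequate}: by \cite[Lemma~6]{FKP}, a $B$--adequate diagram $D$ of a knot $L$ with $c_+$ positive crossings satisfies $d_+[J'_{L,n}(v)] = \frac{c_+}{2}n^2 + O(n)$. Because the left-hand side is a knot invariant, the number of positive crossings of \emph{any} $B$--adequate diagram of $L$ is forced to equal twice the coefficient of $n^2$ in $d_+[J'_{L,n}(v)]$.

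First I would compute that coefficient for $L = W_-(K) = W_1^0(T_{2,-(2m+1)})$. Writing $K = T_{2,-(2m+1)} = \overline{T_{2,2m+1}}$ and using the torus knot degrees recorded in the proof of Example~\ref{inadequate} (which come from \cite{Garoufalidis}), one gets $d_+[J'_{K,n}(v)] = -d_-[J'_{T_{2,2m+1},n}(v)] = -mn + m$. Thus, in the notation of Proposition~\ref{maxdeg_J'_omega>0}, $\alpha_0 = 0$, $\beta_0 = -m$, $\gamma_0 = m$; and since $\tau = 0$ and $\omega = 1 > 0$ we land in the case $\alpha_0 = \frac{\tau}{4} = 0$ with $-2\alpha_0 + \beta_0 + \frac12 = \frac12 - m \ne 0$. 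Proposition~\ref{maxdeg_J'_omega>0} then gives $d_+[J'_{W_-(K),n}(v)] = \delta'_{W_-(K)}(n) = -n + m$ for all $n \ge 0$, so the coefficient of $n^2$ vanishes.

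Combining the two steps, if $W_-(K)$ were $B$--adequate it would admit a $B$--adequate diagram with $c_+ = 0$, that is, a diagram all of whose crossings are negative, making $W_-(K)$ a negative knot. I would then contradict the nature of an untwisted Whitehead double. Because its pattern has winding number $0$, $W_-(K)$ has trivial Alexander polynomial $\Delta_{W_-(K)}(t) \doteq 1$ and signature $\sigma(W_-(K)) = 0$ (the symmetrized Seifert form on the obvious genus--one surface is $\left(\begin{smallmatrix} 0 & 1 \\ 1 & -2 \end{smallmatrix}\right)$, of determinant $-1$). On the other hand $W_-(K)$ is nontrivial (as $K$ is a nontrivial torus knot for $m \ge 2$), and a nontrivial negative knot—being the mirror of a nontrivial positive knot—has both nontrivial Alexander polynomial and, by Przytycki's theorem that positive knots have negative signature, nonzero signature. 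Either invariant yields the contradiction, so $W_-(K)$ is not $B$--adequate, hence not adequate.

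The hard part is not the colored Jones computation, which is routine once Proposition~\ref{maxdeg_J'_omega>0} and the torus knot formula are in hand, but rather certifying that $c_+ = 0$ genuinely obstructs $B$--adequacy; this forces one to play an auxiliary classical invariant against the special structure of Whitehead doubles, and I expect the signature to give the cleanest argument since $\sigma(W_-(K)) = 0$ is an immediate Seifert--matrix computation. It is worth remarking that only the top degree is needed: carrying out the analogous calculation for $d_-[J'_{W_-(K),n}(v)]$ through the mirror identity $\overline{W_-(K)} = W_{-1}^0(T_{2,2m+1})$ and Proposition~\ref{maxdeg_J'_omega<0} produces coefficient of $n^2$ equal to $-\frac{8m+5}{2}$, so the $A$--adequacy constraint merely requires $c_- = 8m+5 \in \Z$ and yields no contradiction—indeed $W_-(K)$ may well be $A$--adequate.
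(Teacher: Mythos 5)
Your proof is correct, but it takes a genuinely different route from the paper's. The paper applies Kalfagianni's characterization \cite[Theorem 1.1]{Ka} to \emph{both} extreme degrees of the colored Jones polynomial of $W_-(T_{p,-q})$ to conclude that adequacy would force Turaev genus one (Lemma~\ref{Turaev_genus_1}), and then excludes this because Turaev genus one knots admit alternating projections on a standard torus \cite{DFKLS} and hence, being prime, are torus or hyperbolic by Adams's theorem---impossible for a Whitehead double of a nontrivial knot, which is a non-torus satellite (Lemma~\ref{Turaev_genus_not_1}). You instead use only the top degree: the vanishing of the quadratic coefficient of $d_+[J'_{W_-(K),n}(v)]$ (Proposition~\ref{maxdeg_J'_omega>0} in the case $\alpha_0=\tfrac{\tau}{4}=0$, $\beta_0=-m$) combined with \cite[Lemma~6]{FKP} forces any $B$--adequate diagram to have $c_+=0$, i.e.\ forces $W_-(K)$ to be a negative knot, which is then killed by the classical facts that $\sigma(W_-(K))=0$ and $\Delta_{W_-(K)}\doteq 1$ while a nontrivial negative knot has nonzero signature (Przytycki) and nontrivial Alexander polynomial (Cromwell). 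Your argument buys the sharper conclusion that $W_-(K)$ is not even $B$--adequate---which the paper only obtains afterwards by separately exhibiting an $A$--adequate diagram---and it works verbatim for $W_-(T_{p,-q})$ for all $p,q\ge 2$, indeed for the untwisted double of any knot with $a_1\le 0$ in the paper's notation; the cost is importing positivity results external to the paper's toolkit. Two harmless caveats: the hypothesis of Proposition~\ref{maxdeg_J'_omega>0} that $d_+[J'_{K,n}]=\delta'_K(n)$ for \emph{all} $n\ge 0$ is strained at $n=0$ by the stated torus-knot formula, but the paper's own Lemma~\ref{Turaev_genus_1} makes the identical application and one may instead invoke Proposition~\ref{maxdeg_J'_signcond} since torus knots satisfy the Sign Condition; and your constant term $\gamma_0=m$ (versus $0$ from the unnormalized formula in Lemma~\ref{Turaev_genus_1}) does not matter since only the quadratic coefficient enters your argument.
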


Since $K = T_{2,-(2m+1)}$ is alternating, it is adequate. 
So this result shows that even when $K$ is adequate, 
its Whitehead double $W_{-}(K)$ may not be adequate.  

To prove Theorem~\ref{non-adequate_W}, 
we prepare two lemmas below. 
Let us denote the Turaev genus of $K$ by $g_T(K)$,  
which was introduced by Turaev \cite{Turaev}, 
and denote the minimal crossing number of $K$ by $c(K)$. 

\begin{lemma}
\label{Turaev_genus_1}
If $W_{-}(T_{p, -q})$ is adequate for $p, q > 0$, 
then $g_T(W_{-}(T_{p, -q})) = 1$. 
\end{lemma}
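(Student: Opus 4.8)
The plan is to compute $g_T(W_-(T_{p,-q}))$ by trapping it between $1$ and $1$, using the basic fact that a knot has Turaev genus $0$ if and only if it is alternating (the Turaev surface of a diagram is a sphere exactly when the diagram is alternating).

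I would establish the lower bound $g_T \ge 1$ first, independently of the adequacy hypothesis, by showing that $W_-(T_{p,-q})$ is non-alternating. Because the doubling is untwisted, the pattern has winding number $0$, so the Alexander polynomial of $W_-(T_{p,-q})$ is trivial, $\Delta_{W_-(T_{p,-q})} \doteq 1$. At the same time $W_-(T_{p,-q})$ is a nontrivial knot, since its nontrivial companion $T_{p,-q}$ is a genuine companion, and hence it has Seifert genus at least $1$. Since a nontrivial alternating knot $K'$ satisfies $\deg \Delta_{K'} = 2 g(K') \ge 2$ by Crowell--Murasugi, its Alexander polynomial is nontrivial. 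Thus $W_-(T_{p,-q})$ cannot be alternating, and therefore $g_T(W_-(T_{p,-q})) \ge 1$.

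For the upper bound $g_T \le 1$, I would invoke the adequacy hypothesis through Abe's theorem \cite{Abe}: for an adequate knot $K$ an adequate diagram simultaneously minimizes the crossing number and realizes the Turaev genus, so $g_T(K) = c(K) - \mathrm{span}_t V_K$, where $\mathrm{span}_t V_K$ is the breadth of the Jones polynomial. The span is computable from the degree formulas of Section~\ref{Jones_generalized_W}: the top degree of $V_K = J'_{K,1}$ is extracted from $\delta_{W_{1}^{0}(T_{p,-q})}$ (the case $\omega>0$ of Proposition~\ref{maxdeg_KT_Wmt-allw}), while its bottom degree is minus the top degree of the Jones polynomial of the mirror $\overline{K}=W_{-1}^{0}(T_{p,q})$ (the case $\omega<0$), with the torus-knot data $a_1,b_1,c_1$ supplied as in Example~\ref{twisting_vs_Jone surface}. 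Together these give $\mathrm{span}_t V_K$ as an explicit linear function of $p$ and $q$. The lower bound already forces $c(K) = g_T(K) + \mathrm{span}_t V_K \ge \mathrm{span}_t V_K + 1$, so the remaining task is the reverse inequality $c(K) \le \mathrm{span}_t V_K + 1$, which I would obtain by producing a diagram of $W_-(T_{p,-q})$ with exactly $\mathrm{span}_t V_K + 1$ crossings; feeding this into Abe's formula yields $g_T(K) \le 1$, and hence $g_T(K) = 1$.

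The main obstacle is precisely this control of the crossing number from above. Producing an efficient diagram of the untwisted double requires correcting the framing of the companion (so that the double is $0$--framed) without spending extra crossings, and then verifying that the resulting diagram attains $c(K)=\mathrm{span}_t V_K+1$; this bookkeeping, rather than any conceptual difficulty, is where the effort lies. One could alternatively try to exhibit a Turaev genus $1$ diagram directly and bypass adequacy, but the natural doubled diagram has much larger Turaev genus, which is exactly why the adequacy hypothesis together with Abe's formula is the efficient route here.
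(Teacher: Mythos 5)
Your lower bound is fine: the untwisted double has trivial Alexander polynomial, is nontrivial, and hence cannot be alternating, so $g_T \ge 1$. The problem is the upper bound, where you have deferred exactly the step that carries all the content. To run Abe's formula $g_T(K) = c(K) - \mathrm{span}_t V_K$ you must bound $c(W_-(T_{p,-q}))$ from above by $\mathrm{span}_t V_K + 1$, and you propose to do this by ``producing a diagram with exactly $\mathrm{span}_t V_K + 1$ crossings'' --- but no such diagram is produced, and it is not mere bookkeeping. The degree computation (which the paper carries out) forces $c(W_-(T_{p,-q})) = 2pq+1$ under the adequacy hypothesis, whereas every naturally constructed diagram of the untwisted double --- doubling a $c$-crossing diagram of $T_{p,-q}$, adding the clasp, and correcting the framing with roughly $2pq$ curl crossings --- has on the order of $4c(T_{p,q}) + 2pq$ crossings, well above $2pq+1$ even after obvious cancellations. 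So as written the proof does not close, and the missing diagram is likely not constructible by elementary means.

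The paper avoids this entirely by invoking Kalfagianni's Theorem~1.1 from \cite{Ka}, which for an adequate knot expresses \emph{both} $c(K)$ and $g_T(K)$ directly in terms of the quadratic coefficients of the degree quasi-polynomials $\delta_K$ and $\delta^*_K$ of the colored Jones function: one relation gives $c(K) = 2(pq+\tfrac{1}{2})$ from the coefficient of $\delta - \delta^*$, and a second relation, $-(pq+\tfrac{1}{2}) = 1 - g_T - c/2$, then yields $g_T = 1$ with no diagram needed. The inputs are exactly the formulas you already identified, namely $\delta_{W_-(T_{p,-q})}(n) = -\tfrac{1}{2}n + \tfrac{pq-p-q}{2} + \tfrac{1}{2}$ and $\delta^*_{W_-(T_{p,-q})}(n) = -\delta_{W_{-1}^0(T_{p,q})}(n) = -(pq+\tfrac{1}{2})n^2 + pq\,n + \tfrac{1}{2}$ from Proposition~\ref{maxdeg_KT_Wmt-allw} and the torus knot data. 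If you replace the Abe-plus-explicit-diagram step with this theorem, your argument becomes the paper's proof (and your separate lower bound becomes unnecessary, since the identity delivers $g_T = 1$ outright).
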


\begin{proof}
Since $T_{p, -q}=\overline{T_{p, q}}$, 
\begin{equation*}
 \delta_{T_{p, -q}}(n) = - \delta^{*}_{T_{p, q}}(n) =-\frac {pq-p-q} 2 n+\frac {pq-p-q} 2,
\end{equation*}
and then by Proposition~\ref{maxdeg_KT_Wmt-allw}, we obtain: 
\begin{equation*}
  \delta_{W_-(T_{p, -q})}(n)=-\frac{1}{2} n + \frac {pq-p-q} 2 + \frac{1}{2}. 
\end{equation*}
Moreover, since 
\begin{equation*}
 \delta_{T_{p, q}}(n) =\frac {pq} 4 n^2-\frac {pq} 4-(1+(-1)^n)\frac{(p-2)(q-2)}{8},
\end{equation*}
 by Propositions~\ref{maxdeg_KT_Wmt-allw} and 
 $\delta^{*}_{W_-(T_{p, -q})}(n)=-\delta_{\overline{W_-(T_{p, -q})}}=-\delta_{W_{-1}^0(T_{p, q})}(n)$, 
we obtain: 
\begin{equation*}
  \delta^{*}_{W_-(T_{p, -q})}(n)=-(pq+\frac{1}{2})n^2+pq n+ \frac{1}{2}. 
\end{equation*}
Then, it follows that 
\begin{equation}
\delta_{W_-(T_{p, -q})}(n)- \delta^{*}_{W_-(T_{p, -q})}(n)
=(pq+\frac{1}{2})n^2-(pq+\frac{1}{2})n +  \frac {pq-p-q} 2.
\end{equation}

Then \cite[Theorem 1.1]{Ka} asserts: 
\begin{equation}
c(W_-(T_{p, -q})) = 2(pq+\frac{1}{2}) = 2pq + 1,\quad \mathrm{and}
\end{equation}
\begin{equation}
-(pq+\frac 1 2) = 1-g_T(W_-(T_{p, -q}))-c(W_-(T_{p, -q}))/2  =1-g_T(W_-(T_{p, -q}))-(pq+\frac 1 2).
\end{equation}
Hence, we have $g_T(W_-(T_{p, -q}))=1$ as desired. 
\end{proof} 

\begin{lemma}
\label{Turaev_genus_not_1}
$g_T(W_-(T_{2, -(2m+1)})) \ne 1$.
\end{lemma}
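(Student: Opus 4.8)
The plan is to bound the Turaev genus from below by the homological width of a knot homology theory, so as to show $g_T(W_-(T_{2,-(2m+1)})) \ge 2$; this of course gives $g_T \ne 1$, and together with Lemma~\ref{Turaev_genus_1} it will force $W_-(T_{2,-(2m+1)})$ to be non-adequate. As a preliminary remark, since an untwisted double has trivial Alexander polynomial, the knot $W_-(T_{2,-(2m+1)})$ is a nontrivial knot of determinant $1$ and is therefore not alternating, so $g_T \ge 1$; the real content of the lemma is to exclude the value $g_T = 1$.

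The key inequality I would use is Lowrance's bound $w_{HF}(K) \le g_T(K) + 2$ relating the width $w_{HF}$ of knot Floer homology (the number of diagonals $\delta = M - A$ on which $\widehat{HFK}$ is supported) to the Turaev genus; the analogous Khovanov-width bound of Champanerkar--Kofman--Stoltzfus and Lowrance would serve equally well. Thus it suffices to prove that $W_-(T_{2,-(2m+1)})$ has width at least $4$, since then $g_T \ge w_{HF} - 2 \ge 2$. To compute the width I would feed the companion torus knot into Hedden's explicit determination of the knot Floer homology of Whitehead doubles in terms of the filtered chain homotopy type of the companion. After fixing conventions---negative versus positive clasp, the mirroring identity $\overline{W_\omega^\tau(K)} = W_{-\omega}^{-\tau}(\overline K)$ recorded in the introduction, and the value $\tau(T_{2,2m+1}) = m$---Hedden's formula expresses $\widehat{HFK}(W_-(T_{2,-(2m+1)}))$ in terms of $\widehat{HFK}(T_{2,2m+1})$, which is the thin staircase supported in Alexander gradings $-m, \dots, m$ with Maslov gradings increasing accordingly. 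Because the double is a genus-one knot, its knot Floer homology lies in Alexander gradings $\{-1,0,1\}$; the point is that for $m \ge 2$ the Maslov gradings of the generators in the extreme Alexander gradings spread over a wide enough range that the $\delta$-grading assumes at least four distinct values. Reading this off Hedden's formula yields $w_{HF} \ge 4$ and hence $g_T \ge 2$.

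The main obstacle is precisely this last Floer-homological computation: correctly translating Hedden's conventions to the untwisted negative double, substituting the staircase complex of $T_{2,2m+1}$, and verifying that the resulting support in the $\delta = M - A$ grading has width at least $4$ for every $m \ge 2$. The hypothesis $m \ge 2$ is used here in an essential way---for $m = 1$ the companion is the trefoil and the computation produces only three diagonals, which is consistent with $g_T$ possibly equal to $1$. By contrast, the two supporting ingredients---the width-to-Turaev-genus inequality and the determinant computation establishing non-alternation---are standard and require no new work.
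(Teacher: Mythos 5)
Your strategy is genuinely different from the paper's, but as written it has a real gap: the entire content of the lemma, under your approach, is the claim that the $\delta$--width of $\widehat{HFK}$ of $W_-(T_{2,-(2m+1)})$ is at least $4$, and you explicitly defer that computation rather than carry it out. This is not a routine verification. Lowrance's inequality only gives $g_T \ge w_{HF}-2$, so width $3$ --- which is what many ``thick'' knots, including doubles of small companions, actually exhibit --- yields nothing beyond $g_T\ge 1$. You would need to translate Hedden's graded formula (stated for positive $t$--twisted doubles) through the mirroring $\overline{W^{\,0}_{1}(K)}=W^{\,0}_{-1}(\overline K)$, substitute the staircase filtration of $T_{2,2m+1}$, and then verify that the generators in Alexander gradings $\{-1,0,1\}$ occupy at least four distinct values of $M-A$ for every $m\ge 2$; the borderline case $m=2$ in particular needs to be checked explicitly, since for $m=1$ you concede the count fails. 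Until that is done, the proof establishes nothing. A secondary (minor) point: your opening paragraph about determinant $1$ and non-alternation only shows $g_T\ge 1$, which is irrelevant to the statement $g_T\ne 1$.

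For comparison, the paper's proof avoids all homological computation: by Dasbach--Futer--Kalfagianni--Lin--Stoltzfus, a Turaev genus one knot admits an alternating projection on a standardly embedded torus; any Whitehead double of a nontrivial knot has Seifert genus one and is therefore prime; and by Adams a prime toroidally alternating knot is either a torus knot or hyperbolic, whereas a Whitehead double of a nontrivial knot is a satellite and hence neither. This argument is both shorter and strictly more general --- it rules out Turaev genus one for the Whitehead double of \emph{every} nontrivial companion, with no restriction to $m\ge 2$ and no case analysis. If you want to salvage your route, you must either supply the full Floer computation or find a reference that records the width of these specific doubles; otherwise the geometric argument is the one to use.
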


\begin{proof}
It follows from \cite[Lemmas 4.1 and 4.5]{DFKLS} that a Turaev genus one knot admits an alternating projection on a standardly embedded torus in $S^3$. 
Recall that since any Whitehead double has (Seifert) genus one, 
it is prime. 
Then Adams \cite{Adams} has shown that such knots are either torus knots or hyperbolic knots. 
Thus the Whitehead double of any non-trivial knot cannot have Turaev genus one. 
\end{proof}

\begin{proof}[Proof of Theorem~\ref{non-adequate_W}]
Assume for a contradiction that $W_-(T_{2, -(2m+1)})$ is adequate. 
Following Lemma~\ref{Turaev_genus_1} $g_T(W_-(T_{2, -(2m+1)})) = 1$. 
On the contrary, Lemma~\ref{Turaev_genus_not_1} shows that 
$g_T(W_-(T_{2, -(2m+1)})) \ne 1$, a contradiction. 
\end{proof}

Theorem~\ref{non-adequate_W} says that $W_{-}(K)$ is not adequate, 
and we can see that a modification of the diagram of $W_{-}(K)$ in Figure~\ref{fig:Wwtwrithe0} is $A$--adequate. 
So it is not $B$--adequate. 
However Theorem~\ref{bothslopeconjecturesWmt} (or Theorem~\ref{slopeconjectureWmtSignCond}) 
shows that $W_{-}(K)$ satisfies the Strong Slope Conjecture. 

\bigskip

\textbf{Acknowledgements} --
We would like to thank the referee for careful reading and valuable suggestions that have enabled us to improve the article. 
In particular, the proof of Lemma~\ref{Turaev_genus_not_1} was simplified by the referee's suggestion. 

\bigskip


\begin{thebibliography}{99}

\bibitem{Adams}
C. Adams; 
Toroidally alternating knots and links, 
Topology \textbf{33} (1994), 353--369.

\bibitem{BMT_graph}
K.L. Baker, K.Motegi and T.Takata, 
The Strong Slope Conjecture for cablings and connected sums, 
in preparation. 

\bibitem{DFKLS}
O.T. Dasbach, D. Futer, E. Kalfagianni, X.S. Lin, and N. Stoltzfus,
The Jones polynomial and graphs on surfaces, 
J.\ Combin.\ Theory Ser.\ B \textbf{98} (2008), 384--399.   

\bibitem{FH}
W. Floyd and A. Hatcher; 
The space of incompressible surfaces in a $2$--bridge link complement, 
Trans.\ Amer.\ Math.\ Soc.\ \textbf{305} (1988), 575--599. 

\bibitem{FKP}
D. Futer, E. Kalfagianni and J. Purcell; 
Slopes and colored Jones polynomials of adequate knots, 
Proc.\ Amer.\ Math.\ Soc.,\ \textbf{139} (2011), 1889--1896. 

\bibitem{FKP2}
D. Futer, E. Kalfagianni, J. Purcell;  
Guts of surfaces and the colored Jones polynomial, 
Lecture Notes in Mathematics, 2069. 
Springer, Heidelberg, 2013. x+170 pp.

\bibitem{Garoufalidis}
S. Garoufalidis; 
The Jones slopes of a knot, 
Quantum Topology \textbf{2} (2011), 43--69. 

\bibitem{Gqqp}
S. Garoufalidis; 
The degree of a $q$-holonomic sequence is a quadratic quasi-polynomial, 
 Electron. J. Combin.  \textbf{18(2)} (2011), 23 pp. 

\bibitem{GL}
S. Garoufalidis and T.T. Le; 
The colored Jones function is $q$-holonomic, 
Geom. Topol. \textbf{9} (2005),1253--1293. 

\bibitem{GV}
S. Garoufalidis and R. van der Veen; 
Quadratic integer programming and the slope conjecture, 
New York J.\ Math.\ \textbf{22} (2016), 907--932. 

\bibitem{Hat1}
A.E. Hatcher; On the boundary curves of incompressible surfaces, 
Pacific J.\ Math.\ \textbf{99} (1982), 373--377. 

\bibitem{HT}
A. Hatcher and W. Thurston;
Incompressible surfaces in $2$--bridge knot complements, 
Invent.\ Math.\ \textbf{79} (1985), 225--246. 

\bibitem{HS}
J. Hoste and P. Shanahan; 
Computing boundary slopes of $2$--bridge links, 
Math.\ Comp.\ \textbf{76} (2007), 1521--1545. 

\bibitem{Ho}
J. Howie: 
Coiled surfaces and slope conjectures, in preparation. 

\bibitem{K-AMSHartford}
E. Kalfagianni;
``Colored Jones polynomials,''  
Talk at AMS meeting, Hartford, CT, April 2019.

\bibitem{Ka}
E. Kalfagianni; 
A Jones slopes characterization of $A$ adequate knots, 
Indiana Univ.\ Math.\ J.\ \textbf{67}(1) (2018) 205--219. 

\bibitem{KT}
E. Kalfagianni and A.T. Tran;
Knot cabling and the degree of the colored Jones polynomial,
New York J.\ Math.\ \textbf{21} (2015), 905--941. 

\bibitem{Lash}
A.E. Lash; 
Boundary curve space of the Whitehead link complement, 
Ph.D. thesis, University of California, Santa Barbara, 1993.

\bibitem{Le-lec} 
T.T. Le;
The colored Jones polynomial and the AJ conjecture, \\
{\tt http://people.math.gatech.edu/\~{}letu/Papers/Lectures\_Luminy\_2014\_new.pdf}.

\bibitem{Mas} 
G. Masbaum; 
Skein-theoretical derivation of some formulas of Habiro, 
Algebr.\ Geom.\ Topol.\ \textbf{3} (2003), 537--556. 

\bibitem{MV}
G. Masbaum and P. Vogel; 
$3$--valent graphs and the Kauffman bracket, 
Pacific\ J.\ Math.\ \textbf{164} (1994), 361--381. 

\bibitem{Mor}
H. R. Morton;
The coloured Jones function and Alexander polynomial for torus knot, 
Math. Proc. Cambridge Philos. Soc.\ \textbf{117} (1995), 129--135. 

\bibitem{MT}
K. Motegi and T. Takata; 
The slope conjecture for graph knots, 
Math.\ Proc.\ Camb.\ Philos.\ Soc.\ \textbf{162} (2017), 383--392. 

\bibitem{Ozawa}
M. Ozawa; 
Essential state surfaces for knots and links, 
J.\ Aust.\ Math.\ Soc.\ \textbf{91}(2011), 391--404. 

\bibitem{Tan} 
T. Tanaka; 
The colored Jones polynomials of doubles of knots, 
J.\ Knot \ Theory \textbf{17} No.8 (2008), 925--937. 

\bibitem{Turaev}
V. G. Turaev; 
A simple proof of the Murasugi and Kauffman theorems on alternating links, 
Enseign.\ Math.\ \textbf{33} (1987),  203--225. 

\bibitem{Zheng}
H. Zheng; 
Proof of the volume conjecture for Whitehead doubles of a Family of torus knots,
Chin.\ Ann.\ Math.,  \textbf{26} B(4) (2007), 375--388.

\end{thebibliography}
\end{document}